\documentclass[11pt,oneside,english,reqno]{amsart}
\usepackage[T1]{fontenc}
\usepackage[utf8]{inputenc}
\usepackage[a4paper]{geometry}
\geometry{verbose,lmargin=2.2cm,rmargin=2.2cm, tmargin=3cm, bmargin=3cm}
\usepackage{mathrsfs}
\usepackage{amstext}
\usepackage{amsthm}
\usepackage{amssymb}
\usepackage{color}
\usepackage{hyperref}
\usepackage{enumerate}
\usepackage{verbatim} 
\usepackage{stmaryrd}
\usepackage{enumitem}
\usepackage{graphicx}
\usepackage{todonotes}

\usepackage{caption}
\usepackage{subcaption}

\makeatletter
\numberwithin{equation}{section}
\numberwithin{figure}{section}
\theoremstyle{plain}
\newtheorem{thm}{\protect\theoremname}
\theoremstyle{definition}
\newtheorem{defn}[thm]{\protect\definitionname}
\theoremstyle{remark}
\newtheorem{rem}[thm]{\protect\remarkname}
\theoremstyle{plain}
\newtheorem{lem}[thm]{\protect\lemmaname}
\theoremstyle{plain}

\theoremstyle{plain}
\newtheorem{cor}[thm]{\protect\corollaryname}
\theoremstyle{plain}
\newtheorem{example}[thm]{\protect\examplename}
\theoremstyle{plain}

\makeatother

\usepackage{babel}
\providecommand{\corollaryname}{Corollary}
\providecommand{\definitionname}{Definition}
\providecommand{\lemmaname}{Lemma}
\providecommand{\propositionname}{Proposition}
\providecommand{\remarkname}{Remark}
\providecommand{\theoremname}{Theorem}
\providecommand{\examplename}{Example}
\providecommand{\conjecturename}{Conjecture}

\newcommand{\cA}{\mathcal{A}}

\newcommand{\cF}{\mathcal{F}}
\newcommand{\cG}{\mathcal{G}}
\newcommand{\cH}{\mathcal{H}}
\newcommand{\cI}{\mathcal{I}}

\newcommand{\cP}{\mathcal{P}}
\newcommand{\cQ}{\mathcal{Q}}


\newcommand{\EE}{\mathbb{E}}

\newcommand{\NN}{\mathbb{N}}

\newcommand{\PP}{\mathbb{P}}

\newcommand{\RR}{\mathbb{R}}


\newcommand{\bk}{\mathbf{k}}

\newcommand{\bl}{\mathbf l}

\newcommand{\bm}{\mathbf{m}}

\newcommand{\bn}{\mathbf{n}}

\newcommand{\bp}{\mathbf{p}}

\newcommand{\bR}{\mathbf{R}}
\newcommand{\br}{\mathbf{r}}
\newcommand{\bS}{\mathbf{S}}
\newcommand{\bT}{\mathbf{T}}

\newcommand{\bx}{\mathbf{x}}

\newcommand{\by}{\mathbf{y}}

\newcommand{\boo}{\mathbf{0}}

\newcommand{\balpha}{{\boldsymbol\alpha}}
\newcommand{\bepsilon}{{\boldsymbol\epsilon}}

\newcommand{\bu}{\mathbf{u}}
\newcommand{\bv}{\mathbf{v}}
\newcommand{\bs}{\mathbf{s}}
\newcommand{\bt}{\mathbf{t}}

\newcommand{\bz}{\mathbf{z}}

\newcommand{\Id}{\mathrm{Id}}

\newcommand{\dd}{\mathop{}\!\mathrm{d}}

\newcommand{\norm}[1]{\left\lVert #1\right\rVert}
\newcommand{\abs}[1]{\left\lvert #1\right\rvert}

\setcounter{tocdepth}{1}

\begin{document}

\title[]{A multiparameter Stochastic Sewing lemma and the regularity of local times associated to Gaussian sheets}
\author{Florian Bechtold \and Fabian A. Harang \and Hannes Kern}
\date{\today}

\address{Florian Bechtold: Fakult\"at f\"ur Mathematik,
Universit\"at Bielefeld, 33501 Bielefeld, Germany}
\email{fbechtold@math.uni-bielefeld.de}

\address{Fabian A. Harang: Department of Economics, BI Norwegian Business School, Handelshøyskolen BI, 0442, Oslo, Norway.}
\email{fabian.a.harang@bi.no}

\address{Hannes Kern: Departement II, TU Berlin, 10623 Berlin, Germany.}
\email{kern@math.tu-berlin.de}

\thanks{ \emph{MSC 2020:} 60H50; 60H15; 60L90
\\
\emph{Acknowledgment:} FB acknowledges support through the Bielefeld Young Researchers Fund that enabled a research visit. HK acknowledges funding by DFG through IRTG 2544.}

\keywords{}

\begin{abstract}
We establish a multiparameter extension of the stochastic sewing lemma \cite{stochasticsewing}. This allows us to derive novel regularity estimates on the local time of locally non-deterministic Gaussian fields. These estimates are sufficiently strong to derive regularization by noise results for SDEs in the plain by leveraging the results from \cite{Bechtold2023}. In this context, we make the interesting and rather surprising observation that regularization effects profiting from each parameter of the underlying stochastic field in an additive fashion usually appear to be due to boundary terms of the driving stochastic field.
\end{abstract}

\maketitle

\tableofcontents{}

\section{Introduction}
\noindent Setting up an integration theory with respect to Brownian motion $W$ or more general stochastic processes in order to study problems of the form
\begin{equation}
    \label{sde_intro}
    \dd X_t=b(X_t)\dd t +\sigma(X_t)\dd W_t, \qquad x_0\in \RR^d
\end{equation}
is usually an intricate task in the sense that the naive strategy of fixing typical realizations and performing a pathwise analysis fails. This requires us to exploit additional probabilistic properties of the underlying process, which in It\^o's theory of stochastic calculus consists in the martingale property. One alternative approach to stochastic calculus consists in rough path theory due to Lyons \cite{Lyons1998}, where probabilistic properties of Brownian motion allow the construction of the iterated integral $\mathbb{W}_{s,t}=\int_s^t (W_r-W_s) \otimes \dd W_r$. Once this object is constructed, one can show that the couple $(W, \mathbb{W})$ (also called a rough path) is enough to establish an entirely pathwise theory of stochastic integration and thus study associated stochastic differential equations (refer to \cite{frizhairer} for a standard reference on this subject). Crucial advantages of this "factorization" of the problem \eqref{sde_intro} into two steps (i.e. constructing the "lift" in step one and then going from an 'enhanced' local approximation back to the global objects in step two in order to study \eqref{sde_intro}) consist in 
\begin{itemize}
    \item the continuity of the map $(W,\mathbb W)\to X$ (also called It\^o-Lyons map) in suitable topologies, i.e. the mapping that returns the solution $X$ of \eqref{sde_intro} given some noise $X$ (refer to for example to \cite[Theorem 8.5]{frizhairer}).
    \item as a consequence, large deviation principles for \eqref{sde_intro} and related systems can be obtained rather directly thanks to the contraction principle (refer to for example \cite[Theorem 9.5]{frizhairer}). 
    \item possibility to address noise $W$ which is not a semi-martingale, provided a rough path lift is still available (example: fractional Brownian motion with Hurst parameter $H>1/4$. 
    \item possibility to infinite dimensional generalizations, which arguably reached their culmination point in the theory of regularity structures \cite{Hairer2014} and paracontrolled distributions \cite{gubinelli_imkeller_perkowski_2015}. 
\end{itemize}
While extremely versatile in theory and applications, there are however certain questions for which the rough path perspective is too coarse, precisely because it discards fine probabilistic arguments from a very early stage on. In particular, intricate properties such as stochastic cancellations as expressed by the famous Burkholder-Davies-Gundy inequality for example can not be captured by this approach as the later analysis is entirely pathwise.  

\bigskip

In this context, the seminal Stochastic Sewing Lemma due to Khoa L\^e \cite{Le20} can be seen as an approach that combines rough analysis viewpoints with fine probabilistic arguments. In particular, this tool allows us to carry local stochastic cancellations over to global objects, thereby opening up an entirely new line of research. Let us briefly sketch the main idea of its proof and its improvement with respect to the classical sewing lemma. Given some local approximation $A:\Delta_T\to \RR^d$, we ask ourselves under what conditions on $A$ the Riemann-type sums
\[
I^n_t=\sum_{[u, v]\in \mathcal{P}^n([0,t])}A_{u,v}
\]
converge to an object $I_t$, independent of the sequence $(\mathcal{P}^n([0,t]))_n$ of partitions chosen as $|\mathcal{P}^n([0, t])|\to 0$. The by now classical Sewing Lemma due to Gubinelli \cite{gubi} states that this is the case, provided that the following bound hold
\[
|\delta_{s,u,t}A|:=|A_{s,t}-A_{s,u}-A_{u, t}|
\lesssim |t-s|^{1+\epsilon}
\]
for all $s<u<t\in [0, T]$ and some $\epsilon>0$. Note that in the case of a stochastic process $A:\Omega \times \Delta_T \to \RR^d$ all these considerations can of course be carried out pathwise. However, this viewpoint also adapted by rough path theory might be blind to some additional local stochastic cancellations that could be exploited. Indeed, suppose that the stochastic process $A:\Omega \times \Delta_T \to \RR^d$ is $(\mathcal{F}_t)_t$ adapted, i.e. $A_{s,t}$ is $\mathcal{F}_t$ measurable and assume further $\mathbb{E}[\delta_{s,u,t}A|\mathcal{F}_s]=0$. In this case, note that for dyadic partitions $\mathcal{P}^n([0,t])$, we have 
\[
I^n_t-I^{n+1}_t=\sum_{k=0}^{2^n-1} \Big((\delta_{t_k, u_k, t_{k+1}}A)-\mathbb{E}[(\delta_{t_k, u_k, t_{k+1}}A)|\mathcal{F}_{t_k}]\Big), 
\]
where $u_k=\frac{t_k+t_{k+1}}{2}$.
Remarking that the above represents a sum over martingale differences, we obtain immediately from the Burkholder-Davies-Gundy and Minkowski's inequality that 
\[
\norm{I^n_t-I^{n+1}_t}_{L^m(\Omega)}\lesssim \left(\sum_{k=0}^{2^n-1}\norm{\delta_{t_k, u_k, t_{k+1}}A)}_{L^m(\Omega)}^2\right)^{1/2}
\]
from which we infer that $(I^n)_n$ is Cauchy in $L^m(\Omega)$ provided that 
\[
\norm{\delta_{s,u,t}A}_{L^m(\Omega)}\lesssim |t-s|^{1/2+\epsilon}
\]
for all $s<u<t\in [0, T]$ and some $\epsilon>0$. Remark that under the additional assumption $\mathbb{E}[A_{s,t}|\mathcal{F}_s]=0$, one is able to half the local regularity condition on the delta-operator (at the price of also working the $L^m(\Omega)$ topology instead of the topology of almost sure convergence). 

\bigskip

One particularly dynamic field, where stochastic sewing is crucially used consists in pathwise regularization by noise \cite{galeati2020noiseless, harang2020cinfinity, galeatiharang,particlesystems,catellierharang,catellier2,dareiotis,gerencsergal,lukasz,Galeati2022,galeati2,tolomeo, toyomu, butkovsky2023stochastic, bechtold}. While there are many results in this direction in the one parameter setting, i.e. for stochastic processes, treating stochastic fields and by extension regularization by noise for SPDEs with these techniques has so far been done less systematically. A first prominent result for SPDEs using the stochastic sewing Lemma consists in \cite{athreya2022wellposedness} where the authors are able to establish well-posedness of the stochastic heat equation with certain distributional drifts covering in particular the case of the skewed stochastic heat equation. Another line of research related in spirit is \cite{Bechtold2023}, where stochastic differential equations in the plane are treated. Similar to \cite{harang2020cinfinity}, the authors establish a 2D non-linear Young theory, provided certain regularity assumptions on the local time of the underlying stochastic field can be made (also refer to Section \ref{regularizatio by noise section} for a more detailed overview of this approach). While \cite{Bechtold2023} does provide some concrete examples, for which such regularity of the local time results can be provided, a systematic study of which conditions on stochastic fields imply regular local times is missing.

\bigskip

The goal of the present paper is two-fold: We first provide a multiparameter version of the stochastic sewing Lemma, applicable to stochastic fields and of interest in its own right. The analysis is inspired by \cite{kern2023stochastic} already providing a stochastic reconstruction theorem and \cite{Harang2021} establishing a multiparameter sewing lemma. In the second part, we show how this multiparameter stochastic sewing lemma can be used in order to derive new regularity estimates for the local time of stochastic fields. Similar to the one-parameter setting \cite{harang2020cinfinity}, we identify local non-determinism conditions as the crucial property that allows us to derive such results. In combination with the 2D nonlinear Young theory already established in \cite{Bechtold2023}, this provides us immediately with regularization by noise results for SDEs in the plane driven by locally non-deterministic stochastic fields. Such SDEs are tightly connected to the stochastic non-linear wave equation, as explained in more detail in \cite{Bechtold2023}.

\subsection*{Sketch of main ideas}
Let us briefly sketch some main ideas and concepts that go into our analysis. A major difference between stochastic processes (i.e. indexed by time, for example, $[0, T]$) and fields (i.e. indexed by multiparameters $[0, T_1]\times \dots \times [0,T_d]$) consists in the fact that there is no canonical ordering of the index variable. This lack of a canonical "past" immediately translates into some ambiguity as to how to define filtrations for stochastic fields, see for example Figure \ref{filtrations}. Indeed, different orderings on $[0, T_1]\times \dots \times [0, T_d]$ yield different filtrations with different inconveniences and advantages. In our setting, we made the choice of working with the filtration $(\mathcal{F}_\bt)_\bt$ induced by the ordering $\bx\leq \by $, where  $x_i\leq y_i$ for all $1, \dots, d$ , see Figure \ref{filtrations} (A), which we call the strong past. This choice is motivated by the possibility of later incorporating known results in the literature on strong and sectorial local non-determinism \cite{Xiao} into our framework, allowing us to use our multiparameter stochastic sewing lemma to establish new regularity estimates for the local times of such fields. \\
\\
Once our filtration generated by the strong past is fixed, we require some further structural properties from it to establish the multiparameter stochastic sewing lemma. As we intend to iterate the martingale difference / BDG argument along the parameter dimension, we will require a certain compatibility of conditioning with respect to "one"-parameter projections (see Figure \ref{filtrations} (C), (D)). Filtrations that satisfy this naturally appearing structural property are known in the literature as commuting filtrations (refer to Definition \ref{commuting def}).\\
\\
From an algebraic side, we also require the notion of rectangular increments and associated adapted $\delta$-operators, which canonically generalize the one parameter setting of the classical \cite{gubi} and stochastic sewing lemma \cite{stochasticsewing}. Towards this end, we adopt the notational conventions of \cite{Harang2021}. Once this probabilistic and algebraic framework is fixed, we are in shape to prove our multiparameter stochastic sewing lemma.
\\
\\
With the multiparameter stochastic sewing lemma at hand, our main application consists in providing novel regularity estimates for local times of stochastic fields, which are sufficiently strong to deduce pathwise regularization by noise results. The main example we have in mind consists of the fractional Brownian sheet (Definition \ref{fbs}). Local times of stochastic fields are a well-established object of study in the literature, we refer to \cite{Dozzi2003, horowitz} for excellent review articles. Typical regularity results one obtains in the literature concern higher order "spatial" regularity, i.e. the regularity of $x\to L_\bt(x)$ for fixed $\bt\in [0, \bT]$ based on Fourier techniques (see for example \cite[Theorem 28.1]{horowitz})  or the joint continuity of $(x, \bt)\to L_{\bt}(x)$.  Joint continuity in the case of the fractional Brownian sheet for example was established in \cite{Xiao2002}. Let us also point out regularity estimates in Sobolev-Watanabe spaces for the fractional Brownian sheet due to \cite{Tudor2003}. However, we require quantified regularity results for $(x, \bt)\to L_{\bt}(x)$ on a H\"older-Bessel scale almost surely in order to establish regularization by noise results in the spirit of \cite{Bechtold2023}. We are able to provide such regularity estimates in Theorem \ref{additive LND local time} thanks to a combination of Fourier techniques with the multiparameter stochastic sewing lemma in its simplified version, inspired by a corresponding one-parameter result in \cite{harang2020cinfinity}. Using this derived regularity on H\"older-Bessel scales, we are able to directly harness the machinery developed in \cite{Bechtold2023} to deduce pathwise regularization by noise for SDEs in the plain. \\
\\
The crucial structural property of stochastic fields that allows for such arguments are variants of local non-determinism (LND) which we call additive and multiplicative local non-determinism. Local non-determinism is also a classical area of study going back to the works of Berman \cite{berman} and Pitt \cite{pitt}. For a survey on local non-determinism and different variants thereof, we refer to \cite{Xiao2009}. We discuss the relation of our introduced notions of additive and multiplicative LND with other LND notions in the literature and provide explicit calculations for the fractional Brownian sheet verifying the multiplicative LND condition. In this context, we make the observation that additive LND is a property essentially due to boundary terms of the stochastic field considered. This suggests that additive regularization (i.e. increased spatial regularity of the associated local time profiting from each parameter individually in an additive fashion, see Theorem \ref{additive LND local time}) is an effect coming from boundary terms.

\subsection*{Organization of the paper}
After fixing the probabilistic and algebraic framework in Section \ref{preliminaries}, we establish the multiparameter stochastic sewing lemma in Section \ref{sewing section}. In Section \ref{LND section} we then briefly recall different notions of local non-determinism for Gaussian fields introduced in the literature and how they relate to a formulation of said notion that we employ next. In Section \ref{sec_Local_Time} we show that using local non-determinism of Gaussian fields and our derived multiparameter stochastic sewing lemma, one can establish regularity estimates for the local time of such fields on H\"older-Bessel scales. These regularity estimates turn out to be sufficient to immediately apply results from \cite{Bechtold2023}, allowing us to conclude regularization by noise phenomena for SDEs in the plain driven by such fields. 

\section{Preliminaries on multiparameter Stochastics}
\label{preliminaries}
\subsection{Rectangular increments}

Throughout this paper, we will call the dimension of our setting $d$. Let us introduce some frequently used notation:

\begin{itemize}
    \item $[d] = \{1,\dots, d\}$ denotes the full index set.

    \item For any indexset $\theta\in[d]$, its compliment is given by $\theta^c := [d]\setminus\theta$.
    
    \item Letters in light fonts denote numbers $s\in\RR$, $n\in\NN$, whereas letters in bold fonts denote points $\bs\in\RR^d$ or multiindices $\bn\in\NN^d$. In particular, the $d$-dimensional vector consisting of only number 1, is written $\mathbf{1}$, i.e. $\mathbf{1}=(1,...,1)$. Similarly, $\mathbf{0} := (0,\dots,0)$. 

    \item For a multiparameter sequence $(h_\bn)_{\bn\in\NN^d}$, we say that $h_\bn$ converges to $h$ as $\bn\to\infty$, if $h_{\bn^{(k)}}$ converges to $h$ as $k\to\infty$ for all sequences $(\bn^{(k)})_{k\in\NN}$ such that $\mbox{min}_{i\in[d]}(n^{(k)}_i)$ goes to $\infty$ as $k\to\infty$.
    
    \item We identify any indexset $\theta\subset[d]$ with the vector $(\theta_i)_{i=1,\dots,d}$, $\theta_i = 1$ for all $i\in\theta$ and $\theta_i = 0$ else. This especially implies that $[d] = \mathbf 1$ and we can write $\bx+\theta = \by$ for the vector $\by = (y_i)_{i\in[d]}$ such that $y_i = x_i+1$ for $i\in\theta$ and $y_i = x_i$ else.

    \item For two points $\bx,\by\in\RR^d$, we write $\bx\le\by$ if $x_i\le y_i$ holds for $i=1,\dots,d$. We use $\ge,<,>$ analogously.

    \item We denote by $T$ our time horizon. If we work in a multiparameter setting, our time horizon will be given by $[0,T_1]\times\dots\times[0,T_d] =: [\boo,\bT]$.

    \item $\Delta_\bT$ is set to be the simplex of $[\boo,\bT]$ given by

    \begin{equation*}
        \Delta_\bT := \{(\bs,\bt)\in[\boo,\bT]^2~\vert~\bs\le\bt\}\,.
    \end{equation*}
\end{itemize}

\noindent We introduce rectangular increments, mainly following \cite{Harang2021} and \cite{Bechtold2023}. As a motivation, consider a smooth function $f:[0,T]\to\RR$. The increment of $f$ between to numbers $s\le t$ is then given by $f(t)-f(s) = \int_s^t f'(r) dr$. This approach very neatly generalizes to rectangles in $[\mathbf 0,\bT]$: Given two points $\bs\le\bt$ in $[\mathbf 0,\bT]$, we will set the rectangular increment of $f$ between $\bs$ and $\bt$ to be
\begin{equation}\label{eq:motivatingIntegrals}
    \square_{\bs,\bt}^{[d]} f = \int_{s_1}^{t_1}\dots\int_{s_d}^{t_d} \frac{\partial^d}{\partial r_1\dots\partial r_d}f(\br)\dd \br\,.
\end{equation}
This expression can be written as a difference over $f$ evaluated at the edges of the hypercube spanned between $\bs,\bt$, which we will use to extend this definition to non-differentiable $f$. To do so, we need an efficient way to project points onto each other:

\begin{defn}
    For $i\in[d]$ and $\bs\in[\mathbf 0,\bT]$, we define $\pi^i_\bs :[\mathbf 0,\bT]\to[\mathbf 0,\bT]$ to be the projection of the $i$-th variable onto $\bs$:
    \begin{equation*}
        \pi^i_\bs:\bt\mapsto(t_1,\dots, t_{i-1},s_i,t_{i+1},\dots,t_d)\,.
    \end{equation*}
    For any indexset $\theta\subset[d]$, we set
    \begin{equation*}
        \pi_\bs^\theta := \prod_{i\in\theta}\pi_\bs^i:\bt\mapsto(x_i)_{i\in[d]}\,,
    \end{equation*}
    where $x_i = s_i$ for $i\in\theta$ and $x_i = t_i$ else. For a function $f:[\mathbf 0,\bT]\to\RR$, we define

    \begin{equation*}
        \pi_\bs^\theta f(\bt) := f(\pi_\bs^\theta\bt)\,.
    \end{equation*}
\end{defn}
\noindent With this definition, we can define the square increment in a rigorous way:
\begin{defn}
    Given a function $f:[0,T]^d\to\bR$ and an index set $\theta\subset\{1,...,d\}$, we define the rectangular increment to be
    \begin{equation}
        \square_{\bs,\bt}^\theta f := \prod_{i\in\theta}(\Id-\pi^i_\bs)f(\pi^{\theta^c}_\bs \bt)\,,
    \end{equation}
    for any $\bs\le \bt$.
\end{defn}

\noindent This notation is best known in $d=2$, where the increments read
\begin{align*}
    \square_{\bs,\bt}^{(1)} f &= f(t_1,s_2)-f(s_1,s_2)\\
    \square_{\bs,\bt}^{(2)} f &= f(s_1,t_2)-f(s_1,s_2)\\
    \square_{\bs,\bt}^{(1,2)} f &= f(t_1,t_2)-f(t_1,s_2)-f(s_1,t_2)+f(s_1,s_2)\,.
\end{align*} 
One quickly checks that for smooth functions and $\theta = [d]$, this notation agrees with \eqref{eq:motivatingIntegrals}. It is further not hard to see that the square increment fulfills the following identities:
\begin{align*}
    \square_{\bs,\bt}^\theta f &= \prod_{i\in\theta}(\pi_\bt^i-\Id)f(\bs)\\
    &=\sum_{\eta\subset\theta} (-1)^{\abs{\theta\setminus\eta}}f(\pi^\eta_\bt\bs)
\end{align*}
The delta operator $\delta$ has received much attention in the rough paths theory (e.g. \cite{frizhairer}) in connection with various forms of the sewing lemma. It describes how to cut apart increments in a suitable way. A multiparameter extension of this operator is necessary for our purposes, and we therefore recall the construction from \cite{HARANG202134}.

For pairs $(\bs,\bt)\in[\boo,\bT]\times[\boo,\bT]$ and $\bu\in[\boo,\bT]$, we write
\begin{equation}
    \psi^\eta_\bu (\bs,\bt) := (\pi^\eta_\bu\bs,\bt) + (\bs,\pi^\eta_\bu\bt)\,,
\end{equation}
and set $\psi_\bu^\eta f(\bs,\bt) = f(\psi_\bu^\eta(\bs,\bt))$, as before. We can then define:

\begin{defn}
    For any index $i\in[d]$ and $\Xi:\Delta_\bT\to\RR$, we define
    \begin{equation*}
        \delta^i_{\bu}\Xi_{\bs,\bt} := (\Id-\psi^i_\bu)\Xi_{\bs,\bt}
    \end{equation*}
    for any $\bs\le\bu\le\bt$. For an index-set $\theta\subset[d]$, we set
    \begin{equation*}
        \delta^\theta_\bu := \prod_{i\in\theta}\delta^i_\bu\,.
    \end{equation*}
\end{defn}

\noindent As this definition is rather technical, let us write out all the terms explicitly for $d=2$:
\begin{align*}
    \delta_\bu^{(1)}\Xi_{\bs,\bt} &= \Xi_{\bs,\bt}-(\Xi_{\bs,(u_1,t_2)}+\Xi_{(u_1,s_2),\bt})\\
    \delta_\bu^{(2)}\Xi_{\bs,\bt} &= \Xi_{\bs,\bt}-(\Xi_{\bs,(t_1,u_2)}+\Xi_{(s_1,u_2),\bt})\\
    \delta_\bu^{(1,2)}\Xi_{\bs,\bt} &= \Xi_{\bs,\bt}-(\Xi_{\bs,(u_1,t_2)}+\Xi_{(u_1,s_2),\bt})-(\Xi_{\bs,(t_1,u_2)}+\Xi_{(s_1,u_2),\bt})\\
    &\qquad+ (\Xi_{\bs,\bu}+\Xi_{(s_1,u_2),(u_1,t_2)}+\Xi_{(u_1,s_2),(t_1,u_2)}+\Xi_{\bu,\bt})
\end{align*}
We illustrated these terms for $\bu = \frac 12(\bs+\bt)$, where $\Xi_{\bs,\bt}$ is represented by an orange rectangle.

\begin{figure}
     \centering
     \begin{subfigure}[b]{0.3\textwidth}
        \centering
         \includegraphics[scale=1.5]{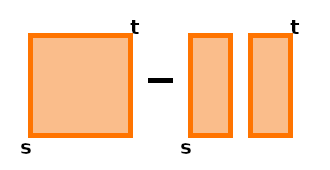}
         \caption{$\delta_{\bu}^{\{1\}}\Xi_{\bs,\bt}$}
     \end{subfigure}
     \begin{subfigure}[b]{0.3\textwidth}
         \centering
         \includegraphics[scale=1.5]{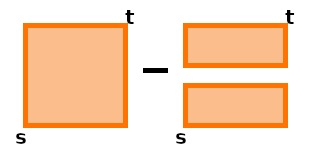}
         \caption{$\delta_{\bu}^{\{2\}}\Xi_{\bs,\bt}$}
     \end{subfigure}
     \\[\baselineskip]
     \begin{subfigure}[c]{0.3\textwidth}
         \centering
         \makebox[0pt]{\includegraphics[scale=1.5]{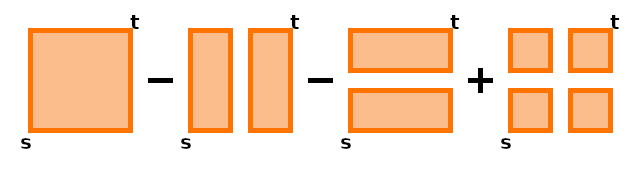}}
         \caption{$\delta_{\bu}^{\{1,2\}}\Xi_{\bs,\bt}$}
    \end{subfigure}
    \caption{$\delta$-notation}
\end{figure}

In dimension $d=1$, the $\delta$-operator has a particular interaction with so-called additive functions: A two-parameter function $f_{s,t}$ is of increment form, i.e. $f_{s,t} = f_t-f_s$ for some one-parameter function $f_t$, if and only if
\[
\delta_u f_{s,t} = 0\,.
\]
This observation extends to the case $d\ge 2$ in the sense that functions that satisfy the property 
\[
\delta_\bu^\theta F_{\bs,\bt}=0, 
\]
for all $\theta\subset[d]$ are precisely the rectangular increments of other $d$-parameter functions. That is, $\delta_\bu^\theta F_{\bs,\bt}=0$ for all $\theta\subset[d]$ if and only if there is some function $f:[\boo,\bT]\rightarrow \RR$ such that $F_{\bs,\bt}=\square^{[d]}_{\bs,\bt}f$. This gives rise to the following definition:
\begin{defn}
    We say that a multiparameter function $f:[\boo,\bT]\times[\boo,\bT]\rightarrow \RR$ is additive, if for all $\bs\leq \bu \leq \bt$ 
    \[
    \delta_\bu^{\theta} f_{\bs,\bt}=0\,
    \]
    holds for all $\theta\subset[d]$.
\end{defn}

\begin{rem}\label{rem:additive integral}
    In light of equation \eqref{eq:motivatingIntegrals}, a function given on (multiparameter) integral form, i.e.  $F_{\bs,\bt}= \int_{\bs}^\bt f_{\br} \dd \br $ is additive. 
\end{rem}

\begin{rem}
    Since $\delta^\theta_\bu = \prod_{i\in\theta}\delta^i_\bu$, it follows that $f$ is additive, if and only if for all $\bs\le\bu\le\bt$ in $[\boo,\bT]$ and $i\in [d]$, $\delta^i_\bu f_{\bs,\bt} = 0$.
\end{rem}
\medskip

\subsection{Grid-like partitions}

A partition of an intervall $[s,t]\subset\RR$ is a finite collection of points
\begin{equation*}
    \cP = \{s=p_1<p_2<\dots<p_n=t\}\,.
\end{equation*}
We identify each partition with the set of intervals
\begin{equation*}
    \cP = \{[p_1,p_2],\dots,[p_{n-1},p_n]\}\,.
\end{equation*}
Let $\cP^1,\dots, \cP^d$ be partitions of $[s_i,t_i]$, respectively. Given two points $\bs,\bt\in \RR$ and partitions $\cP^i$ of $[s_i,t_i]$ for $i\in[d]$, we define the grid-like partitions to be given by
\begin{equation*}
    \cP^\theta = \cQ^1\times\dots\times\cQ^d
\end{equation*}
for each $\theta\subset[d]$, where $\cQ^i = \cP^i$ for $i\in\theta$ and $\cQ^i = \{s_i,t_i\}$ else. With an abuse of notation, we can write this as
\begin{equation*}
    \cP^\theta = \prod_{i\in\theta}\cP^i\times[\bs_{\theta^c},\bt_{\theta^c}]\,,
\end{equation*}
where we use the notation $[\bs_{\theta^c},\bt_{\theta^c}] = \prod_{i\in\theta^c} [s_i,t_i]$. This is, of course, informal, as the product between sets is not commutative, so $\prod_{i\in\theta} \cP^i$ is not well defined. Even worse, multiplying $[\bs_{\theta^c},\bt_{\theta^c}]$ should not come at the right-hand-side, but needs to be interwoven in the product over $\theta$. Nevertheless, the above expression gives the right picture of what $\cP^\theta$ is supposed to be. We will therefore use the above notation to describe grid-like partitions.

As before, we identify grid-like partitions with sets of multiparameter intervals. Let $\cP^i = \{s_i = p^i_1\le\dots\le p^i_{n_i} = t_i\}$ and set $\bn = (n_1,\dots, n_d)$. Then we can identify $\cP^{[d]}$ with the set
\begin{equation*}
    \cP^{[d]} = \{[\bp_\bk,\bp_{\bk+[d]}]~\vert~ [d]\le \bk\le\bn-[d]\}
\end{equation*}
where $\bp_\bk = (p^1_{k_1},\dots,p^d_{k_d})$. For $\cP^\theta$, we simply replace the partitions $\cP^i$ with $\{s_i,t_i\}$ for $i\notin\theta$. Thus, $\cP^\theta$ is a partition of $[\bs,\bt]$ for all $\theta\subset[d]$. We define the mesh sizes of these partitions as
\begin{align*}
    \abs{\cP^\theta} := \sup_{[u,v]\in\bigcup_{i\in[d]}\cQ^i}\abs{u-v}\,,
\end{align*}
where $\cQ^i = \cP^i$ for $i\in\theta$ and $\cQ^i= \{[s_i,t_i]\}$ else, as before. Given an indexed set of partitions $\cP^i$ of some intervals $[s_i,t_i]$, we will always denote by $\cP^\theta$ the corresponding grid-like partition of $[\bs,\bt]$ and vice versa.

Given a two-parameter function $\Xi:[\bs,\bt]^2\to \RR$ and a grid-like partition $\cP$ of $[\bs,\bt]$, $\cP$ acts on $\Xi$ as follows:
\begin{equation*}
    \cP\Xi_{\bs,\bt} := \sum_{[\bu,\bv]\in\cP}\Xi_{\bu,\bv}\,.
\end{equation*}
If $\Xi$ is defined on one interval $[\bs,\bt]$ and $\cP$ is a grid-like partition of some other interval $[\bS,\bT]$, we can construct a grid-like partition of $[\bs,\bt]$ as follows: Given any $u_i\in[S_i,T_i]$, we set
\begin{equation}
\begin{split}
    \tilde u_i = \begin{cases}
        s_i &, u_i\le s_i \\
        u_i &, s_i<u_i\le t_i \\
        t_i &, t_i\le u_i\,.
    \end{cases}    
\end{split}
\label{tilde notation}
\end{equation}
and $\cP^i(s_i,t_i) := \{\tilde u_i ~\vert~ u_i\in\cP^i\}\cup\{s_i,t_i\}$. For a multiparameter interval $[\bs,\bt]$ and $\mathbf u\in\RR^d$, we accordingly set $\tilde {\mathbf u} := (\tilde u_1,\dots,\tilde u_d)$. We then construct the corresponding grid-like partition $\cP^\theta(\bs,\bt) := \prod_{i\in\theta} \cP^i(s_i,t_i)\times[\bs_{\theta^c},\bt_{\theta^c}]$, which is a partition of $[\bs,\bt]$.

The last bit of notation we introduce is the notation of \emph{neighbors} of a point $\bu$. Let $\bs\le\bu\le\bt$ and $\cP$ a grid-like partition of $[\bs,\bt]$. We call $u_i^-, u_i^+\in\cP^i$ the largest (respective smallest) point in $\cP$ such that $u_i^-\le u_i \le u_i^+$. We call $\bu^- := (u_1^-,\dots, u_d^-)$ and $\bu^+ := (u_1^+,\dots,u_d^+)$ the neighbors of $\bu$ in $\cP$.

\subsection{Multiparameter filtration and BDG inequality}

Throughout this paper, we assume $(\Omega,\cF,\PP)$ to be a complete probability space. In the multiparameter setting, there is no clear "past" of any point $\bx\in[\boo,\bT]$, so one must treat filtrations more carefully. Since we have a partial ordering $\le$ on $[\boo,\bT]$, we can use the theory of filtrations with partially ordered parameter sets \cite{Walsh86}:
\begin{defn}\label{def:multiparameter_filtration}
    We say that $(\mathcal F_\bt)_{\bt\in[\boo,\bT]}$ is a multiparameter filtration, if for every $\bs\le\bt$, it holds that $\cF_\bs\subset\cF_\bt$.
\end{defn}
\noindent We will assume that every multiparameter filtration $(\cF_\bt)_{\bt\in[\boo,\bT]}$ appearing throughout this paper is complete. The one-dimensional sewing lemma uses a BDG-type inequality (\cite{Le20}, equation (2.4)) as its main working horse: Given a discreet one-parameter filtration $(\cF_n)_{n\in\NN}$ and a sequence of random variables $(Z_k)_{k\in\NN}$ such that $Z_k$ is $\cF_{k+1}$-measurable, we have that
\begin{equation*}
    \norm{\sum_{k=1}^N Z_k}_m\lesssim \sum_{k=1}^N \norm{\EE[Z_k\vert\cF_k]}_m + \left(\sum_{k=1}^N \norm{Z_k-\EE[Z_k\vert\cF_k]}_m^2\right)^{\frac 12}\,.
\end{equation*}
Since we do not care for the constant in $\lesssim$, we use Jensen's inequality to bound $\norm{Z_k-\EE[Z_k\vert\cF_k]}_m\le 2\norm{Z_k}_m$. The above inequality thus simplifies to
\begin{equation}\label{ineq:BDG1d}
    \norm{\sum_{k=1}^N Z_k}_m\lesssim \sum_{k=1}^N \norm{\EE[Z_k\vert\cF_k]}_m + \left(\sum_{k=1}^N \norm{Z_k}_m^2\right)^{\frac 12}\,.
\end{equation}
The generalization of this equation into the multiparameter setting needs additional assumptions on the filtration $(\cF_\bs)_{\bs\in[\boo,\bT]}$, which we want to motivate by considering the two-dimensional case: Assume we are given a finite set of random variables $Z_{\bk}\in L_m$ for some $m\ge 2$ and $\bk\in\NN^2\cup[\boo,\bT]$, such that $Z_{\bk}$ is $\cF_{\bk+(1,1)}$-measurable. We want to bound $\norm{\sum_{\bk} Z_\bk}_m$. The simple idea is to decompose the sum
\begin{equation}\label{eq:Z_K_decomposition}
    \sum_{\bk} Z_\bk = \sum_{k_1=1}^{N_1}\sum_{k_2=1}^{N_2} Z_{(k_1,k_2)} = \sum_{k_1 = 1}^{N_1} \tilde Z_{k_1}
\end{equation}
and using \eqref{ineq:BDG1d} twice. To do so, we need \emph{marginal} filtrations $\cF_{k_1}^1,\cF_{k_2}^2$, such that
\begin{itemize}
    \item $\tilde Z_{k_1}$ is $\cF_{k_1+1}^1$-measurable.
    \item $\EE(Z_{(k_1,k_2)}\vert\cF^{(1)}_{k_1+1})$ is $\cF_{k_2+1}^2$-measurable.
\end{itemize}
Since $\tilde Z_{k_1} = \sum_{k_2} Z_{(k_1,k_2)}$, the first property implies that we should use
\begin{equation*}
    \cF^1_{k_1} = \sigma\left(\bigcup_{k_2} \cF_{(k_1,k_2)}\right)\,,
\end{equation*}
and for consistency (so that we can switch the sums and use the same construction), $\cF^2_{k_2} = \sigma\left(\bigcup_{k_1} \cF_{(k_1,k_2)}\right)$. For the second condition, we note that $Z_{(k_1,k_2)}$ is already $\cF_{k_2+1}^2$-measurable, so the condition should read that \emph{conditioning on $\cF^1_{k_1}$ does not interfere with conditioning on $\cF^2_{k_2}$}. Filtrations with this property are well-known in the literature as \emph{commuting filtrations}, for reference, see\cite{Imkeller85}, \cite{Walsh86}, \cite{Khoshnevisan02}.

\begin{defn}
    We say that a filtration $\cF_\bt$ is {\em commuting} if for all $\bs,\bt\in [\boo,\bT]$ and all bounded $\cF_\bt$ measurable random variables $Y$ we have 
    \begin{equation}\label{eq:commuting}
        \EE[Y|\cF_\bs]=\EE[Y|\cF_{\bs\wedge \bt}]
    \end{equation}
    \label{commuting def}
\end{defn}
\noindent As in the motivating example, we want to define marginal filtrations:
\begin{defn}
    For $\theta\subset[d], \bt\in[0,\bT]$, we set
    \begin{equation*}
        \cF^\theta_\bt := \sigma\left(\bigcup_{\bs\in[0,\bT]} \cF_{\pi^\theta_\bt \bs}\right)\,.
    \end{equation*}
    For $i\in[d]$, we call $\cF^i_\bt := \cF^{\{i\}}_\bt$ the marginal filtrations of $\cF_\bt$.
\end{defn}
\noindent Note that $\cF_\bt^\theta$ only depends on $(t_i)_{i\in\theta}$. $\cF_\bt^\theta$ can thus be seen as a multiparameter filtration over $\prod_{i\in\theta} [0,T_i]$. It especially follows that the marginal filtrations are one-parameter filtrations with $\cF^i_\bt = \cF^i_{t_i}$ only depending on $t_i$. Further note that $\cF^{[d]}_\bt = \cF_\bt$ and $\cF^\emptyset_\bt = \cF_\bT$. It easily follows that if $\cF_\bt$ is complete, so is $\cF_\bt^\eta$ for all $\eta\subset[d]$.

We use the notation $\EE^\eta_\bt X := \EE[X\vert\cF^\eta_\bt]$. The advantage of commuting filtrations is that they are fully characterized by their marginal filtrations, as the next result shows. It can be found for example in \cite{Walsh86}, page 349 or Theorem 3.4.1 in \cite{Khoshnevisan02}
\begin{thm}
    $\cF$ is a commuting multiparameter filtration, if and only if conditioning on the marginal filtrations commutes, i.e. for all $i,j=1,\dots,d$ and bounded random variables $X\in L_1(\Omega)$, we have
    \begin{equation*}
        \EE^i_\bt \EE^j_\bs X = \EE^j_\bs \EE^i_\bt X\,.
    \end{equation*}
    Furthermore, we have
    \begin{equation*}
        \EE^1_\bt\dots\EE^d_\bt X = \EE^{[d]}_\bt X\,.
    \end{equation*}
\end{thm}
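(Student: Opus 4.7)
The plan is to split the argument into the forward direction, the ``furthermore'' identity, and the backward direction. A preliminary observation used throughout is that $\cF^i_\bt = \cF_{\pi^i_\bt \bT}$ for each $i\in[d]$ and $\bt\in[\boo,\bT]$: this follows from the monotonicity of $\cF$, since $\pi^i_\bt \br \le \pi^i_\bt \bT$ for every $\br$ (so $\cF_{\pi^i_\bt \br}\subset \cF_{\pi^i_\bt \bT}$), while $\br=\bT$ realizes equality in the union defining $\cF^i_\bt$.

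For the forward direction, fix distinct $i,j\in[d]$ and points $\bs,\bt\in[\boo,\bT]$, and set $\bu:=\pi^i_\bt \bT$, $\bv:=\pi^j_\bs \bT$. For any bounded $X$, the variable $Y:=\EE[X|\cF_\bv]$ is $\cF_\bv$-measurable, so \eqref{eq:commuting} combined with the tower property (using $\cF_{\bu\wedge\bv}\subset\cF_\bv$) yields
\[
\EE^i_\bt \EE^j_\bs X = \EE[Y|\cF_\bu] = \EE[Y|\cF_{\bu\wedge\bv}] = \EE[X|\cF_{\bu\wedge\bv}].
\]
By symmetry $\EE^j_\bs \EE^i_\bt X$ equals the same quantity, yielding marginal commuting. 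Iterating the same identity with $\bu_i:=\pi^i_\bt \bT$ and observing that $\bigwedge_{i=1}^d \bu_i = \bt$, one peels off one marginal at a time and obtains $\EE^1_\bt\cdots\EE^d_\bt X = \EE[X|\cF_\bt] = \EE^{[d]}_\bt X$, which is the ``furthermore'' identity.

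For the backward direction, assume that the marginal conditional expectations commute. The key measure-theoretic step is to establish $\cF_\bs = \bigcap_{i=1}^d \cF^i_\bs$ for every $\bs$. The inclusion ``$\subset$'' is immediate; for ``$\supset$'', given $A\in\bigcap_i\cF^i_\bs$ one has $\EE^i_\bs \indic_A = \indic_A$ for every $i$, so the iterated version of marginal commuting (which identifies a product of commuting conditional expectations with the conditional expectation on the intersection of the underlying $\sigma$-algebras) forces $\indic_A$ to be $\cF_\bs$-measurable, and hence $A\in\cF_\bs$ by completeness. Once this identity is in place, given a bounded $\cF_\bt$-measurable $Y$ I expand $\EE[Y|\cF_\bs] = \EE^1_\bs\cdots\EE^d_\bs Y$ and use marginal commuting together with the fact that $Y$ is $\cF^j_\bt$-measurable for every $j$ (again by the intersection identity applied to $\cF_\bt$) to shuffle the $\EE^i_\bs$'s past the marginal structure of $Y$; each factor $\EE^i_\bs$ then acts on a $\cF^i_\bt$-measurable quantity and collapses to $\EE^i_{\bs\wedge\bt}$, after which recombination gives $\EE[Y|\cF_{\bs\wedge\bt}]$.

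The hardest step is the backward direction, and within it the identity $\cF_\bs = \bigcap_i \cF^i_\bs$ derived from marginal commuting alone. This identity is essentially the measure-theoretic content of the classical Cairoli--Walsh condition $(F4)$ for commuting filtrations; once it is available, everything else is routine bookkeeping with \eqref{eq:commuting} and the tower property.
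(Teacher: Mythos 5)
The paper does not actually prove this theorem — it cites Walsh (p.~349) and Khoshnevisan (Theorem 3.4.1) — so your proposal is judged on its own. Your preliminary identification $\cF^i_\bt=\cF_{\pi^i_\bt\bT}$ is correct, and with it the forward direction and the product formula $\EE^1_\bt\cdots\EE^d_\bt X=\EE^{[d]}_\bt X$ go through exactly as you describe: apply \eqref{eq:commuting} to $Y=\EE[X|\cF_\bv]$, use the tower property to land on $\cF_{\bu\wedge\bv}$, and iterate using $\bigwedge_i\pi^i_\bt\bT=\bt$.

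The backward direction, however, has a genuine gap at precisely the step you flag as the hardest. From $A\in\bigcap_i\cF^i_\bs$ you obtain $\EE^1_\bs\cdots\EE^d_\bs\indic_A=\indic_A$, but a product of pairwise commuting conditional expectations is the conditional expectation onto the \emph{intersection} $\bigcap_i\cF^i_\bs$ of the underlying $\sigma$-algebras — not onto $\cF_\bs$. So this identity is a tautology and does not force $A\in\cF_\bs$; to identify $\prod_i\EE^i_\bs$ with $\EE[\cdot|\cF_\bs]$ you would need exactly the intersection identity you are trying to establish (or the ``furthermore'' formula, which you derived only under the commuting hypothesis, so invoking it here is circular). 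Moreover, $\cF_\bs=\bigcap_i\cF^i_\bs$ genuinely does not follow from marginal commuting alone. Take $d=2$, $U,V$ independent nonconstant bounded random variables, and a filtration that is piecewise constant on $[0,1]^2$ with $\cF_{(0,0)}$ trivial, $\cF_{(1,0)}=\cF_{(0,1)}=\sigma(U)$, $\cF_{(1,1)}=\sigma(U,V)$. All marginal $\sigma$-algebras lie in $\{\sigma(U),\sigma(U,V)\}$ and hence their conditional expectations pairwise commute, yet $\cF_{(0,0)}\subsetneq\cF^1_{(0,0)}\cap\cF^2_{(0,0)}=\sigma(U)$, and \eqref{eq:commuting} fails for $\bs=(1,0)$, $\bt=(0,1)$, $Y=U$ since $\EE[U|\cF_{(1,0)}]=U\neq\EE[U]=\EE[U|\cF_{(0,0)}]$. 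Thus the backward implication requires $\cF_\br=\bigcap_i\cF^i_\br$ (equivalently $\prod_i\EE^i_\br=\EE[\cdot|\cF_\br]$) as an additional standing hypothesis on the filtration, as it is in the cited sources. Granting that hypothesis, your step 3 does close up: write $Y=\prod_j\EE^j_\bt Y$, commute all $2d$ marginal projections, pair $\EE^i_\bs\EE^i_\bt=\EE^i_{\bs\wedge\bt}$ (nested one-parameter $\sigma$-algebras), and recombine into $\EE[Y|\cF_{\bs\wedge\bt}]$.
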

\begin{cor}
    For all $\emptyset\neq\eta\subset[d]$, $\cF_\bt^\eta$ is a commuting filtration as a multiparameter filtration over $\prod_{i\in\eta} [0,T_i]$. It especially follows that
    \begin{equation}\label{eq:comFiltrations}
        \prod_{i\in\eta}\EE^i_\bt X = \EE^\eta_\bt X.
    \end{equation}
\end{cor}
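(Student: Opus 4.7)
The plan is to deduce the corollary directly from the theorem immediately preceding it, by first identifying the marginal filtrations of the restricted filtration $\cF^\eta$ with the marginals $\cF^i$ of the ambient filtration $\cF$. The key preparatory step is the identity
\[
(\cF^\eta)^i_\bt = \cF^i_\bt \quad \text{for every } i \in \eta,
\]
where $(\cF^\eta)^i_\bt$ denotes the $i$-th marginal of $\cF^\eta$ viewed as a multiparameter filtration over $\prod_{j\in\eta}[0, T_j]$. I would prove this by unpacking the two nested suprema: by definition $\cF^\eta_\br = \sigma(\bigcup_\bs \cF_{\pi^\eta_\br \bs})$, so
\[
(\cF^\eta)^i_\bt = \sigma\Bigl(\bigcup_{\br, \bs} \cF_\bu\Bigr),
\]
where $u_i = t_i$ is the only coordinate that is fixed (the coordinates in $\eta\setminus\{i\}$ are free through $\br$, and those in $\eta^c$ are free through $\bs$). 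The resulting union therefore coincides with $\bigcup_{\bu : u_i = t_i}\cF_\bu$, which is exactly the generating family for $\cF^i_\bt$.

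Once this identification is available, the first assertion of the corollary follows by the characterization in the theorem: $\cF^\eta$ is commuting as a multiparameter filtration over $\prod_{i\in\eta}[0,T_i]$ provided that its marginal conditional expectations commute, i.e.\ $(\EE^\eta)^i_\bt\,(\EE^\eta)^j_\bs = (\EE^\eta)^j_\bs\,(\EE^\eta)^i_\bt$ for all $i,j\in\eta$. But by the previous paragraph these are just $\EE^i_\bt \EE^j_\bs = \EE^j_\bs \EE^i_\bt$, which holds because $\cF$ itself is commuting, and the theorem applied to $\cF$ produces precisely this commutation relation for its marginals.

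For the second assertion, I would then apply the second part of the theorem to the now-commuting filtration $\cF^\eta$ over $\prod_{i\in\eta}[0,T_i]$. The "full" conditional expectation associated to $\cF^\eta$ is $\EE^\eta_\bt$, since $[d]$ for the restricted filtration is exactly $\eta$ and $(\cF^\eta)^\eta = \cF^\eta$ by construction. The theorem then yields
\[
\prod_{i\in\eta} (\EE^\eta)^i_\bt X = (\EE^\eta)^\eta_\bt X = \EE^\eta_\bt X,
\]
and substituting $(\EE^\eta)^i_\bt = \EE^i_\bt$ gives the desired identity \eqref{eq:comFiltrations}.

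The only delicate point I expect is the identification of marginals $(\cF^\eta)^i = \cF^i$; the rest is a direct invocation of the theorem. This identification is purely set-theoretic and should be handled cleanly by carefully tracking which coordinates are fixed and which are free in the nested definitions of $\pi^\eta$ and $\pi^i$, using that $\cF^\eta_\br$ depends only on $(r_j)_{j\in\eta}$ to justify that varying coordinates outside $\eta$ in the outer union does not enlarge the $\sigma$-algebra further.
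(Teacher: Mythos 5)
Your proposal is correct and takes essentially the same route as the paper's proof: identify the marginal filtrations of $\cF^\eta_\bt$ with $\cF^i_\bt$ for $i\in\eta$, then invoke the preceding theorem's characterization of commuting filtrations via commuting marginal conditional expectations (and its second part for $\prod_{i\in\eta}\EE^i_\bt = \EE^\eta_\bt$). The only difference is that the paper dismisses the marginal identification as immediate, whereas you spell out the nested-suprema computation explicitly.
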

\begin{proof}
    One easily sees that the marginal filtrations of $\cF_\bt^\eta$ are given by $\cF^i_\bt$ for $i\in\eta$. Since conditioning on these commutes, $\cF^\eta_\bt$ commutes.
\end{proof}
\noindent Note that per definition, $\cF^\emptyset_\bt = \cF_\bT$, so we have that $\EE^\emptyset_\bt X = \EE[X\vert\cF_\bT] = X = \prod_{i\in\emptyset} \EE^i_\bt X$, as long as $X$ is $\cF_\bT$ measurable. Thus, \eqref{eq:comFiltrations} holds for all $\eta\subset[d]$.
\begin{example}
    Given a Brownian sheet $(B_\bt)_{\bt\in[\boo,\bT]}$, we have that the filtration
    \begin{equation*}
        \cF_\bt := \overline{\sigma\left(B_\bs\vert\bs\le\bt\right)}
    \end{equation*}
    is commuting and its marginal filtrations are given by
    \begin{equation*}
        \cF^i_\bt = \overline{\sigma\left(B_\bs\vert s_i\le t_i\right)}\,.
    \end{equation*}
    See \cite{Walsh86} for reference.
\end{example}
\noindent In general, one should think of $\cF_\bt$ as capturing the behavior of our process over the section $\{\bs\le\bt\}$, i.e. everything to the bottom-left of $\bt$ in two dimensions. In this case, $\cF^1$ captures all the information to the left of $\bt$ and $\cF^2$ captures all the information to the bottom of $\bt$. From this perspective, it is natural to assume that first projecting onto $L_2(\cF^1_\bt)$ and then onto $L_2(\cF^2_\bt)$ should give the same result as projecting onto $L_2(\cF_\bt)$. 

For completeness' sake, it should be noted that Walsh defines and heavily uses another multiparameter filtration, which we call the weak past of a process. In the above example, it would be given by
\begin{equation}\label{strongPast}
    \cF^*_\bt = \overline{\sigma(B_\bs\vert s_i\le t_i \text{ for at least one }i\in[d])}
\end{equation}
while in general, it is the filtration generated by the marginal filtrations $\cF^*_\bt = \sigma\left(\bigcup_{i\in[d]} \cF^i_\bt\right)$. It will however only play a minor role in our analysis. The two-dimensional case is illustrated.
\begin{figure}
     \centering
     \begin{subfigure}[b]{0.2\textwidth}
        \centering
         \includegraphics[scale=2]{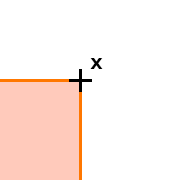}
         \caption{$\cF_\bx$}
     \end{subfigure}
     \begin{subfigure}[b]{0.3\textwidth}
        \centering
         \includegraphics[scale=2]{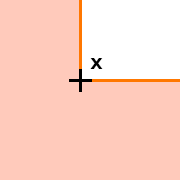}
         \caption{$\cF_\bx^*$}
     \end{subfigure}
     \begin{subfigure}[b]{0.2\textwidth}
         \centering
         \includegraphics[scale=2]{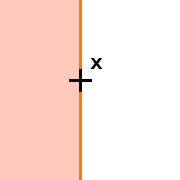}
         \caption{$\cF^1_\bx$}
     \end{subfigure}
     \begin{subfigure}[b]{0.2\textwidth}
         \centering
         \includegraphics[scale=2]{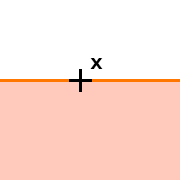}
         \caption{$\cF^2_\bx$}
     \end{subfigure}
        \caption{Strong past, weak past and marginal filtrations}
         \label{filtrations}
\end{figure}

To return to the construction \eqref{eq:Z_K_decomposition} for general dimension $d$, we note that the full result is rather technical and can be found in Lemma \ref{multiparameter BDG}. In this section, we only show the special case of vanishing conditional expectation, i.e. we assume that $\EE^i_\bk Z_\bk = 0$ for all $i\in [d]$, which is sufficient for our main results in Section \ref{sec_Local_Time}. In this case, \eqref{ineq:BDG1d} simplifies to \eqref{ineq:BDG1d} to $\norm{\sum_{i=1}^N Z_k}_m\lesssim \left(\sum_{k=1}^N\norm{Z_k}^2\right)^{\frac 12}$ and in $d\ge2$, we get:
\begin{lem}\label{lem:muti-parameterBDG-simplified}
    Let $I_i\subset\NN$ be finite sets wit minimal element $y_i\in I_i$. Set $\by = (y_1,\dots,y_d)$ and let $(\cF_\bn)_{\bn\in\NN^d}$ be a $d$-parameter filtration. Assume that $Z_\bk\in L_m(\Omega)$ for $m\ge 2, \bk\in I_{[d]}$ is $\cF_{\bk+[d]}$-measurable with
    \begin{equation*}
        \EE^i_\bk Z_\bk = 0
    \end{equation*}
    for all $i\in [d]$. Then
    \begin{equation}\label{eq:BDGsimplified1}
        \norm{\sum_{\bk\in I_\theta} Z_\bk}_m\lesssim\left(\sum_{\bk\in I_\theta}\norm{Z_\bk}_m^2\right)^{\frac12}
    \end{equation}
    and
    \begin{equation}\label{eq:BDGsimplified2}
        \EE_\by^i \sum_{\bk\in I_\theta} Z_\bk = 0
    \end{equation}
    holds for all $\theta\subset[d]$, and $i\in[d]$.
\end{lem}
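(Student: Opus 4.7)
My plan is to prove both \eqref{eq:BDGsimplified1} and \eqref{eq:BDGsimplified2} by induction on the dimension $d$, stripping off one coordinate at each step and applying the one-parameter BDG inequality \eqref{ineq:BDG1d} along the corresponding axis with respect to the appropriate marginal filtration. The commuting property of $(\cF_\bt)_\bt$ is what makes this iterative reduction possible, since after integrating out the last coordinate it guarantees that the remaining $(d-1)$-parameter filtration is still commuting with the correct marginals. The base case $d = 1$ is immediate from \eqref{ineq:BDG1d} together with $\EE^1_k Z_k = 0$ for \eqref{eq:BDGsimplified1}, while \eqref{eq:BDGsimplified2} follows from the tower property combined with $\cF_y \subset \cF_k$ whenever $y \leq k$.

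For the inductive step I fix $\theta \subset [d]$ and decompose
\begin{equation*}
\sum_{\bk \in I_\theta} Z_\bk \;=\; \sum_{k_d} \tilde S_{k_d}, \qquad \tilde S_{k_d} := \sum_{\bk'} Z_{(\bk', k_d)},
\end{equation*}
where $\bk'$ ranges over the appropriate $(d-1)$-dimensional grid and $k_d$ ranges over $I_d$ (or is fixed at $y_d$ if $d \notin \theta$, in which case the first reduction is trivial). I apply \eqref{ineq:BDG1d} to $\sum_{k_d} \tilde S_{k_d}$ with respect to the one-parameter marginal filtration $(\cF^d_{k_d})_{k_d}$. The measurability $\tilde S_{k_d} \in \cF^d_{k_d+1}$ follows from $\cF_{(\bk' + [d-1],\, k_d + 1)} \subset \cF^d_{k_d+1}$, and $\EE[\tilde S_{k_d} \vert \cF^d_{k_d}] = 0$ reduces to the hypothesis $\EE^d_\bk Z_\bk = 0$, since $\cF^d_\bk$ depends only on $k_d$. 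This already yields $\bigl\| \sum_{\bk} Z_\bk \bigr\|_m^2 \lesssim \sum_{k_d} \|\tilde S_{k_d}\|_m^2$.

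To bound each $\|\tilde S_{k_d}\|_m$ I view $\{W_{\bk'} := Z_{(\bk', k_d)}\}_{\bk' \in \NN^{d-1}}$ as a $(d-1)$-parameter family equipped with the $(d-1)$-parameter filtration $\cG_{\bk'} := \cF^{[d-1]}_{(\bk', t_d)}$, which is independent of the choice of $t_d$ by definition of $\cF^{[d-1]}$. By \eqref{eq:comFiltrations} and the Corollary preceding this lemma, $\cG$ is commuting, with marginals $\cG^i = \cF^i$ for $i \in [d-1]$; thus the measurability and vanishing-conditional-expectation hypotheses transfer verbatim from $Z_\bk$ to $W_{\bk'}$. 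The inductive hypothesis gives $\|\tilde S_{k_d}\|_m^2 \lesssim \sum_{\bk'} \|Z_{(\bk', k_d)}\|_m^2$, and combining with the previous bound proves \eqref{eq:BDGsimplified1}. Finally \eqref{eq:BDGsimplified2} requires no BDG at all: by linearity and the tower property, using $\cF^i_{y_i} \subset \cF^i_{k_i}$ whenever $y_i \leq k_i$ together with $\EE^i_\bk Z_\bk = 0$, each summand already satisfies $\EE^i_\by Z_\bk = 0$.

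The main obstacle in this plan is identifying the correct reduced $(d-1)$-parameter filtration on which to run the induction. A naive restriction $\cF_{(\bk', *)}$ would not have the right marginals to reproduce the hypothesis $\EE^i_\bk Z_\bk = 0$ after reduction; choosing $\cG := \cF^{[d-1]}$ is what makes the commuting property and marginal filtration structure stable under the dimension reduction, and this is precisely where the commuting assumption on $(\cF_\bt)$ is essentially consumed.
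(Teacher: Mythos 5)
Your proof is correct and follows essentially the same route as the paper, which simply states that one applies the one-dimensional inequality \eqref{ineq:BDG1d} $d$ times over the sum; your induction on $d$ with the reduced filtration $\cG_{\bk'}=\cF^{[d-1]}_{(\bk',t_d)}$ is a careful spelling-out of exactly that iteration, and the verification that $\cG$ has marginals $\cF^i$, $i\in[d-1]$, is the right way to make the hypotheses transfer. The only small over-statement is your closing remark that the commuting assumption is ``essentially consumed'' here: in this simplified setting (all marginal conditional expectations vanish) the iteration goes through without commutativity, which is consistent with the lemma not assuming it; commutativity only becomes essential in the general BDG inequality of Lemma \ref{multiparameter BDG}.
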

\begin{proof}
    It is immediate to see \eqref{eq:BDGsimplified2}. To see \eqref{eq:BDGsimplified1}, we simply apply \eqref{ineq:BDG1d} $d$ times over the sum $\sum_{\bk\in I_\theta} Z_\bk$.
\end{proof}
\begin{rem}\label{rem:BDG_for_strong_past}
    It should be noted that there already exists a (different) generalization of the BDG-inequality \eqref{ineq:BDG1d} in \cite{Imkeller85}, Proposition 1: By using the weak past $\cF_\bt^* := \sigma\left(\bigcup_{i\in[d]}\cF^i_\bt\right)$, one can show that
    \begin{equation}\label{BDGImkeller}
    \norm{\sum_{\bk\in \{1,\dots,N\}^d} Z_{\bk}}_m\lesssim \sum_{\bk\in \{1,\dots,N\}^d} \norm{E[Z_\bk\vert\cF_\bk^*]}_m + \left(\sum_{\bk\in \{1,\dots,N\}^d} \norm{Z_\bk}_m^2\right)^{\frac 12}
    \end{equation}
    for bounded random variables $Z_\bk$ which are $\cF_{\bk+[d]}$-measurable. 

    This observation does however not fit our purpose, as it does not lead to the correct bounds in Corollary \ref{cor:sectorial_non_determinism}: For a fractional Brownian motion, conditional variance with respect to the strong past fulfills $\mbox{Var}(W_\bt\vert\cF_\bs^{\bepsilon})\gtrsim\sum_{i=1}^d \abs{t_i-s_i}^{2\zeta_i}$, whereas using the weak past $\cF_\bs^{\bepsilon,*}$ leads to the weaker estimate $\mbox{Var}(W_\bt\vert\cF_\bs^{\bepsilon,*})\gtrsim\min_{i=1,\dots, d} \abs{t_i-s_i}^{2\zeta_i}$. This is the main reason why we need a BDG inequality with respect to the strong past, given in Lemma \ref{lem:muti-parameterBDG-simplified}.
\end{rem}

\subsection{$C_2$ spaces}

A $C_2$ space is given by a stochastic 2d-parameter process $\Xi:\Delta_\bT\times\Omega\to \RR$. We say that $\Xi$ is adapted to $\cF_\bt$, if for all $(\bs,\bt)\in\Delta_\bT$ it holds that $\Xi_{\bs,\bt}$ is $\cF_\bt$-measurable. 

We need to generalize Hölder-norms to these processes. To do so, we use the following notation for $\boo\le\bx$ and $\bs,\bt\in[\boo,\bT]$:
\begin{align}
    \abs{\bt-\bs} &:= \sum_{i\in[d]}\abs{t_i-s_i}\\
    m_{\bx}(\bs,\bt) &:= \prod_{i\in[d]}\abs{t_i-s_I}^{x_i}\,.
\end{align}
Note that this implies that $m_{\theta}(\boo,\bt) = \prod_{i\in\theta}t_i$. We will also set $m_\bx(0,\alpha) := \prod_{i\in[d]}\alpha^{x_i}$ for $\alpha\in\RR$, and, since we identify $\theta\in\RR^d$ as a vector, $m_{\alpha\theta}(\bs,\bt) := \prod_{i\in\theta}\abs{t_i-s_i}^\alpha$. 

Let us take a short look at the deterministic case: In \cite{Harang2021}, the multiparameter sewing lemma requires that $\Xi_{\bs,\bt}$ has two regularities $0<\alpha\le\beta$, such that for any $\bs\le\br\le\bt$ and indexset $\emptyset\neq\theta\subset[d]$,
\begin{align*}
    \abs{\Xi_{\bs,\bt}}&\lesssim m_{\alpha}(\bs,\bt)\\
    \abs{\delta^\theta_\br\Xi_{\bs,\bt}} &\lesssim m_{\beta\theta+\alpha\theta^c}(\bs,\bt)\,.
\end{align*}
That is, $\Xi$ has Hölder-regularity $\alpha$ in all directions and if we apply the $\delta$-operator, the regularity increases to some $\beta\ge\alpha$. Sewing than requires $\beta>1$.

In the stochastic case, we want to replace $\abs\cdot$ with the $L_m$-norm $\norm{\cdot}_m$ for some $m\ge 2$ (so that we can apply the BDG-type inequality form lemma \ref{lem:muti-parameterBDG-simplified}) and assume that adding conditional expectation $\EE^i_\bs$ increases the regularity. In practice, applying $\EE^i_\bs$ should add another factor $\abs{t_i-s_i}^\gamma$ for some $\gamma>0$. It should then suffice that $\beta+\gamma>1$ to do stochastic sewing.

These ideas lead us to the following definition:

\begin{defn}
Assume $(\cF_\bt)_{\bt\in[\boo,\bT]}$ is a commuting filtration. Let $\Xi:\Delta_\bT\times\Omega\to\RR$, $\alpha,\in(0,1),\beta\in(\frac 12,\infty),\gamma\in(0,\frac 12)$ with $\beta>\alpha$. Let $m\ge 2$ and $\norm{\cdot}_m$ be the $L_m(\Omega)$- norm. We set
\begin{equation*}
    \norm{\Xi}_{\alpha,m} := \sup_{(\bs,\bt)\in\Delta_\bT} \frac{\norm{\Xi_{\bs,\bt}}_m}{m_{\alpha[d]}(\bs,\bt)}
\end{equation*}
and for two indexsets $\eta\subset\theta\subset[d]$, $\theta\neq\emptyset$ we set
\begin{equation*}
    \norm{\delta\Xi}_{\alpha,\beta,\gamma,m}^{\eta,\theta} := \sup_{\bs<\bu<\bt\in[0,\bT]} \frac{\norm{\EE^\eta_\bs\delta^\theta_\bu\Xi_{\bs,\bt}}_m}{m_{\beta\theta+\alpha\theta^c+\gamma\eta}(\bs,\bt)}\,.
\end{equation*}
We then set
\begin{equation*}
    \norm{\delta\Xi}_{\alpha,\beta,\gamma,m} := \sum_{\eta\subset\theta\subset[d],\theta\neq\emptyset} \norm{\delta\Xi}_{\alpha,\beta,\gamma,m}^{\eta,\theta}
\end{equation*}
and define
\begin{equation*}
    \norm{\Xi}_{\alpha,\beta,\gamma,m} := \norm{\Xi}_{\alpha,m} +\norm{\delta\Xi}_{\alpha,\beta,\gamma,m}\,.
\end{equation*}
$C^{\alpha,\beta,\gamma}_2 L_m((\cF_\bt)_{\bt\in[\boo,\bT]})$ is then given as the space of all $\cF_\bt$-adapted $\Xi_{\bs,\bt}$ in $L_m$, such that $\norm{\Xi}_{\alpha,\beta,\gamma,m} <\infty$.
\end{defn}

\begin{rem}
    In this paper, we will always assume that there is some commuting filtration $\cF_\bt$ in the background, and not explicitly refer to it unless necessary. In this sense, we abbreviate $C_2^{\alpha,\beta,\gamma} L_m = C_2^{\alpha,\beta,\gamma} L_m((\cF_\bt)_{\bt\in[\boo,\bT]})$.
\end{rem}

\noindent
Note that if we set $\gamma = 0$, we get the space of $\Xi$, such that the conditional expectation does not improve regularity. As this space will play an important role later, it gets its own definition:

\begin{defn}
    We denote by $C_2^{\alpha,\beta} L_m := C_2^{\alpha,\beta,0} L_m$ the space of adapted (with respect to some commuting filtration) $2d$-parameter processes $\Xi:\Delta_\bT\times\Omega\to\RR$, such that for all $\emptyset\neq\theta\subset[d]$
    \begin{align*}
        \norm{\Xi}_{\alpha,m} &<\infty\\
        \norm{\delta\Xi}_{\alpha,\beta,m}^\theta &:= \sup_{\bs<\bu<\bt\in[\boo,\bT]}\frac{\norm{\delta^\theta_\bu\Xi_{\bs,\bt}}_m}{m_{\beta\theta+\alpha\theta^c}(\bs,\bt)}<\infty\,.
    \end{align*}
    We call the norm of this space
    \begin{equation*}
        \norm{\Xi}_{\alpha,\beta,m} = \norm{\Xi}_{\alpha,m} + \sum_{\emptyset\neq\theta\subset[d]}\norm{\delta\Xi}_{\alpha,\beta,m}^\theta\,.
    \end{equation*}
\end{defn}

\noindent There is one further simplification one can make: If additionally to $\gamma=0$, it holds that $\alpha = \beta$, the restriction on $\norm{\delta\Xi}_{\alpha,\beta,m}^\theta$ for any $\emptyset\neq \theta\subset[d]$ becomes redundant. Indeed, since $\delta_\bu^\theta \Xi_{\bs,\bt}$ is a finite linear combination of $\Xi_{\bx,\by}$ where $\bx,\by$ are certain combinations of $\bs,\bu$ and $\bt$, we see that $\norm{\Xi}_{\alpha,m}<\infty$ immediately implies that $\norm{\delta\Xi}^\theta_{\alpha,\alpha,m}<\infty$. This observation leads to the last definition of this subsection:

\begin{defn}
    We denote by $C_2^{\alpha} L_m := C_2^{\alpha,\alpha,0} L_m$ the space of adapted (with respect to some commuting filtration) $2d$-parameter processes $\Xi:\Delta_\bT\times\Omega\to\RR$, such that
    \begin{equation*}
        \norm{\Xi}_{\alpha,m}<\infty\,.
    \end{equation*}
\end{defn}

\noindent In order to formulate our regularization by noise result, we will use the framework of Bessel potential spaces. For $\eta\in \RR$, we denote by $H^\eta$ the inhomogeneous Bessel potential space of order $\eta$, i.e.
\begin{equation}\label{bessel_potential}
H^\eta:=\left\{f\in \mathcal{S}'|\ \norm{f}_{H^\eta}=\norm{(1+|(\cdot)|)^\eta \hat{f}}_{L^2}<\infty \right\}.
\end{equation}

\section{Multiparameter stochastic sewing lemma}
\label{sewing section}

\noindent Let us briefly discuss multiparameter sewing before we introduce the stochastic multiparameter sewing lemma: As discussed in the introduction, the classical sewing lemma shows the convergence of Riemann-type sums
\begin{equation*}
    I^n_t = \sum_{[u,v]\in\cP^n([0,t])}A_{u,v}
\end{equation*}
as long as $\abs{\delta_{s,u,t} A}\lesssim \abs{t-s}^{1+\epsilon}$ for some $\epsilon > 0$. Furthermore, we can decompose any increment of the limit $I_t$ in to $A_{s,t}$ and a remainder $R_{s,t}$
\begin{equation*}
    I_t-I_s = A_{s,t} +R_{s,t}\,,
\end{equation*}
where $\abs{R_{s,t}}\lesssim \abs{t-s}^{1+\epsilon}$. This property uniquely characterizes $I_t$.

In higher dimension, $\delta_{s,u,t} A$ can be generalized to the $\delta$-operator $\delta^{[d]}_\bu\Xi_{\bs,\bt}$ for a $2d$-parameter process $\Xi$. But somewhat surprisingly, it does no longer suffice to ask 
\begin{equation*}
    \abs{\delta^{[d]}_\bu \Xi_{\bs,\bt}}\lesssim \prod_{i=1}^d \abs{t_i-s_i}^{1+\epsilon}
\end{equation*}
to show that $\sum_{[\bu,\bv]\in\cP_\bn}\Xi_{\bu,\bv}$ converges for any sequence of grid-like partitions $\cP_\bn$ with vanishing mesh size. As it turns out, one needs an appropriate bound for each $\delta^\theta_\bu\Xi_{\bs,\bt}$, $\theta\subset[d]$ to get convergence.

Let us at this point fix dimension $d=2$ for convenience's sake. In that case, one needs the bounds $\abs{\delta^1_\bu\Xi_{\bs,\bt}} \lesssim \abs{t_1-s_1}^{1+\epsilon}$, $\abs{\delta^2_\bu\Xi_{\bs,\bt}} \lesssim \abs{t_2-s_2}^{1+\epsilon}$ and $\abs{\delta^{(1,2)}_\bu\Xi_{\bs,\bt}} \lesssim \prod_{i=1}^2\abs{t_i-s_i}^{1+\epsilon}$ to get convergence of the Riemann-type sum, which we denote by $\cI^{(1,2)}\Xi_{\bs,\bt}$. This is, in fact, the square increment of the $d$-parameter process $\bt\mapsto \cI^{(1,2)}\Xi_{\boo,\bt}$, but for our purpose it is easier to think of the sewing of $\Xi$ as a $2d$-parameter process.

We want to uniquely characterize $\cI^{(1,2)}\Xi$. If we compare it to $\Xi$, we will find that the remainder fulfills
\begin{equation*}
    \abs{\cI^{(1,2)}\Xi_{\bs,\bt}-\Xi_{\bs,\bt}}\lesssim\sum_{i=1}^2\abs{t_i-s_i}^{1+\epsilon}\,,
\end{equation*}
which does in general not uniquely characterize $\cI^{(1,2)}\Xi$. The missing ingredient here are the \emph{partial sewings} of $\Xi$: Since we asked $\delta^1_\bu\Xi_{\bs,\bt}\lesssim\abs{t_i-s_i}^{1+\epsilon}$, we can fix the parameters $(s_2,t_2)$ and apply the one-dimensional sewing lemma to $(s_1,t_1)\mapsto\Xi_{(s_1,s_2),(t_1,t_2)}$ to construct the $(1)$-sewing
\begin{equation*}
    \cI^{1}\Xi_{\bs,\bt} = \lim_{n\to\infty}\sum_{[u_1,v_1]\in\cP_n} \Xi_{(u_1,s_2),(v_1,t_2)}\,.
\end{equation*}
This corresponds to fixing $(s_2,t_2)$ and constructing an integral of the first variable over the interval $[s_1,t_1]$. Analogously, we can construct the $(2)$-sewing $\cI^{2}\Xi_{\bs,\bt}$ by ``integrating'' over the interval $[s_2,t_2]$ while fixing the parameters $(s_1,t_1)$. Note that these terms are uniquely characterized by requiring that their remainders $R^i_{\bs,\bt} = \cI^i\Xi_{\bs,\bt}-\Xi_{\bs,\bt}$ fulfill
\begin{equation*}
    \abs{R^i_{\bs,\bt}}\lesssim \abs{t_i-s_i}^{1+\epsilon}
\end{equation*}
for $i=1,2$. As it turns out, $\Xi, \cI^1\Xi$ and $\cI^2\Xi$ together characterize the sewing $\cI^{(1,2)}$ uniquely: If we set the $(1,2)$-remainder to be
\begin{equation*}
    R^{(1,2)}_{\bs,\bt} = \cI^{(1,2)}\Xi_{\bs,\bt}-\cI^{1}\Xi_{\bs,\bt}-\cI^{2}\Xi_{\bs,\bt}+\Xi_{\bs,\bt}\,,
\end{equation*}
which mimics the construction of the $\delta$-operator, we find that $\abs{R^{(1,2)}_{\bs,\bt}}\lesssim\prod_{i=1}^2 \abs{t_i-s_i}^{1+\epsilon}$ holds and characterizes $\cI^{(1,2)}\Xi$ uniquely. So in the multiparameter case, we always get a family of sewings $\cI^\theta\Xi$ for all $\theta \subset[d]$, where $\theta$ denotes the parameters, over which we integrate, while the parameters in $\theta^c$ get fixed to $\bs_{\theta^c},\bt_{\theta^c}$ beforehand. 

More specifically, the multiparameter sewing lemma gives us a unique $2d$-parameter process $\cI^\theta\Xi_{\bs,\bt}$ for each $\theta\subset[d]$, such that

\begin{itemize}
    \item If we do not integrate over any index, we get $\Xi$, i.e. $\cI^\emptyset\Xi_{\bs,\bt} = \Xi_{\bs,\bt}$.
    
    \item All $\cI^\theta\Xi_{\bs,\bt}$ are additive in the indexes $i\in\theta$, i.e. $\delta^i_\bu\cI^\theta\Xi_{\bs,\bt} = 0$ for all $i\in\theta$ and $\bs\le\bu\le\bt$. Another way to think about this is that for any fixed $\bs_{\theta^c},\bt_{\theta^c}$, $(\bs_\theta,\bt_\theta)\mapsto \cI^\theta\Xi_{\bs,\bt}$ is a square increment $\square^\theta_{\bs_\theta,\bt_\theta} f(\cdot,\bs_{\theta^c},\bt_{\theta^c})$ for some $(2d-\abs{\theta})$-parameter process $f$. In particular, $\cI^{[d]}\Xi_{\bs,\bt}$ is a square increment of a $d$-parameter process. 

    \item We define the $\theta$-remainder to be
    \begin{equation*}
        R^\theta_{\bs,\bt} := \sum_{\eta\subset\theta} (-1)^{\eta}\cI^\eta\Xi_{\bs,\bt} = \Xi_{\bs,\bt} + (-1)^{\abs{\theta}}\cI^{\theta}\Xi_{\bs,\bt}+\sum_{\emptyset\neq\eta\subsetneq\theta}(-1)^{\eta}\cI^\eta\Xi_{\bs,\bt}\,.
    \end{equation*}
    It then holds that all $\cI^\theta\Xi_{\bs,\bt}$ are uniquely characterized by
    \begin{equation*}
        \abs{R^\theta_{\bs,\bt}}\lesssim\prod_{i\in\theta}\abs{t_i-s_i}^{1+\epsilon}\,,
    \end{equation*}
    for $\emptyset\neq\theta\subset[d]$.
\end{itemize}

\noindent How can one combine this with stochastic sewing? Recall that the main technique of stochastic sewing in dimension $d=1$ is to replace the assumption that $\abs{\delta_{s,u,t} A} \le \abs{t-s}^{1+\epsilon}$ with the the two assumptions
\begin{align*}
    \norm{\delta_{s,u,t} A}_m &\lesssim \abs{t-s}^{\frac 12+\epsilon_1}\\
    \norm{\EE_s \delta_{s,u,t} A}_m &\lesssim \abs{t-s}^{1+\epsilon_2}\,,
\end{align*}
for some $\epsilon_1,\epsilon_2>0$ and $m\ge 2$, where $\norm{\cdot}_m$ denotes the $L_m(\Omega)$-norm and $\EE_s = \EE[~\cdot~\vert\cF_s]$. That is, as long the conditional expectation $\EE_s\delta_{s,u,t} A$ has as much regularity as we needed in the deterministic sewing lemma, it suffices for $\norm{\delta_{s,u,t} A}_m$ to have half as much regularity. The price one pays for this is twofold: First, the Riemann-type sum $\sum_{[u,v]\in\cP^n([0,t])} A_{u,v}$ no longer converges almost surely but in the $L_m$-norm. And secondly, if we denote $R_{s,t} = I_t-I_s-A_{s,t}$ the remainder as before, we need two regularity conditions on $R_{s,t}$ to uniquely characterize $I_t$:
\begin{align*}
    \norm{R_{s,t}}&\lesssim\abs{t-s}^{\frac 12+\epsilon_1}\\
    \norm{\EE_s R_{s,t}}&\lesssim\abs{t-s}^{1+\epsilon_2}\,.
\end{align*}
As discussed in previous sections, in the multiparameter case, there is no unique concept of a past, which means that the filtration one can use is somewhat ambiguous. As it turns out, one does indeed have a choice here: Whether one wants to use the weak or strong past given by $\cF^*_\bs$ (see \eqref{strongPast}) and $\cF_\bs$ respectively. Let us first discuss the weak past $\cF^*_\bs$: As discussed in Remark \ref{rem:BDG_for_strong_past}, there is a BDG inequality that needs adaptedness with respect to the filtration $\cF_\bs$ of some $(Z_\bk)_{\bk\in\NN^d}$ to show
\begin{equation*}
    \norm{\sum_{\bk}Z_k}_m\lesssim \left(\sum_{\bk}\norm{Z_\bk-\EE^*_\bk Z_\bk}_m^2\right)^{\frac 12} + \sum_{\bk} \norm{\EE^*_\bk Z_\bk}_m\,.
\end{equation*}
If one uses this in the sewing lemma, one gets the three conditions
\begin{itemize}
    \item $\Xi_{\bs,\bt}$ is $\cF_\bt$ measurable for each $\bs\le\bt$.
    \item For all $\emptyset\neq\theta\subset[d]$, $\bs,\le\bu\le\bt$, We have the two regularities
    \begin{align}
        \norm{\delta_{\bu}^\theta\Xi_{\bs,\bt}}_m &\lesssim \prod_{i\in\theta}\abs{t_i-s_i}^{\frac 12+\epsilon_1}\label{ineq:coherence_condition1}\\
        \norm{\EE^{\theta,*}_\bs\delta_{\bu}^\theta\Xi_{\bs,\bt}}_m &\lesssim \prod_{i\in\theta}\abs{t_i-s_i}^{\frac 12+\epsilon_1}\lesssim \prod_{i\in\theta}\abs{t_i-s_i}^{1+\epsilon_2}\,,\label{ineq:coherence_*_condition}
    \end{align}
    where $\EE^{\theta,*}_\bs = \EE[\cdot~\vert\cF^{\theta,*}_\bs]$ denotes the conditional expectation on the sigma algebra $\cF^{\theta,*}_\bs = \sigma\left(\bigcup_{i\in\theta} \cF_\bs^i\right)$. Note that one has in particular $\cF^{[d],*}_\bs = \cF^*_\bs$.
\end{itemize}
It is actually possible to show that the Riemann-type sum $\sum_{[\bu,\bv]\in\cP_\bn}\Xi_{\bu,\bv}$ converges. While this setting is arguably more elegant than the sewing lemma we end up proving, $\norm{\EE^{\theta,*}_\bs\delta_{\bu}^\theta\Xi_{\bs,\bt}}_m$ is simply not the object we want to analyze: As mentioned in Remark \ref{rem:BDG_for_strong_past}, using $\cF^*_\bs$ to show local non-determinism does in general lead to weaker bounds of the conditional covariance $\mbox{Var}(X_\bt\vert\cF_\bs^*)$.

To achieve the bounds we hope for, we need to replace $\cF^*_\bs$ with the strong past $\cF_\bs$. This naturally leads to the question, of whether one can simply replace the $\cF^{\theta,*}_\bs$ with $\cF_\bs^\theta$ in \eqref{ineq:coherence_*_condition} and still get convergence of the Riemann sums. Unfortunately, the answer to this is no, since the BDG-inequality \eqref{BDGImkeller} only holds for the weak past $\cF^*_\bs$.
However, if we enrich \eqref{ineq:coherence_*_condition} with additional assumptions on $\EE^\eta_\bs \delta^\theta_\bu\Xi_{\bs,\bt}$ for all $\emptyset\neq\eta\subseteq\theta$, we can use the 1d-BDG type inequality inductively to show a new BDG-type inequality, which only uses the strong past $\cF_\bs$, as showed in Lemma \ref{multiparameter BDG}. This leads to the desired sewing lemma, which we state as Lemma \ref{lem:StochSewing}. In fact, if we replace $\cF_\bs^{\eta*}$ with $\cF_\bs^\eta$, one can combine \eqref{ineq:coherence_condition1} and \eqref{ineq:coherence_*_condition} into a single condition by interpreting \eqref{ineq:coherence_condition1} as the case $\eta = \emptyset$. If one does this, we get that the Riemann-sums converge, as long as
\begin{itemize}
    \item $\Xi_{\bs,\bt}$ is $\cF_\bt$-measurable and
    \item For all $\eta\subset\theta\subset[d]$, $\theta\neq\emptyset$ and $\bs\le\bu\le\bt$, we have that
    \begin{equation*}
        \norm{\EE^\eta_\bs\delta^\theta_\bu\Xi_{\bs,\bt}}_m\lesssim \prod_{i\in\theta\setminus\eta}\abs{t_i-s_i}^{\frac12+\epsilon_1}\prod_{i\in\eta}\abs{t_i-s_i}^{1+\epsilon_2}\,.
    \end{equation*}
\end{itemize}
These conditions are what one would intuitively expect: $\delta^\theta_{\bs,\bt}$ has regularity $\beta>\frac 12$, but whenever we condition on the marginal $\sigma$-field $\cF^i_\bs$, the regularity in the direction $i$ increases to a $\gamma>1$. Under these conditions, one gets convergence of the Riemann sums to objects $\cI^\theta\Xi_{\bs,\bt}$ in the $L_m(\Omega)$-norm for all $\theta\subset[d]$, as in the deterministic case. Furthermore, the $\cI^\theta\Xi_{\bs,\bt}$ are uniquely characterized under the same conditions as before if one replaces the bound on $R^\theta_{\bs,\bt}$ by
\begin{equation*}
    \norm{\EE^\eta_\bs R^\theta_{\bs,\bt}}_m \lesssim \prod_{i\in\theta\setminus\eta}\abs{t_i-s_i}^{\frac 12+\epsilon_1}\prod_{i\in\eta}\abs{t_i-s_i}^{1+\epsilon_2}\,.
\end{equation*}
Since the proof of the sewing lemma is highly technical, we will present it in the simplified case in which we assume that all conditional expectations $\EE^i_\bs\delta^\theta_\bu\Xi_{\bs,\bt}$ vanish for all $i\in\theta$, $\emptyset\neq\theta\subset[d]$ and $\bs\le\bu\le\bt$. The statement of the general stochastic multiparameter sewing lemma, as well as the respective BDG-type inequality necessary for the general proof, can be found in Subsection \ref{sec:general_sewing_lemma}.

\subsection{A technical result on multiparameter sequences}

Before we show the proof of the stochastic multiparameter sewing lemma, let us discuss one of the key techniques of (deterministic) multiparameter sewing: Let us fix dimension $d=2$, and consider a sequence of dyadic grid-like partitions $\cP_\bn$ of the interval $[\bs,\bt]$. We want to show that
\begin{equation}\label{eq:convergentSequnece}
    \cP_\bn\Xi_{\bs,\bt} = \sum_{[\bu,\bv]\in\cP_\bn}\Xi_{\bu,\bv} 
\end{equation}
converges. In the one-dimensional case, one does that by showing that $\norm{(\cP_n-\cP_{n+1})\Xi_{\bs,\bt}}\lesssim 2^{-n\epsilon}$ for some $\epsilon>0$ holds. It then follows, that $\cP_\bn\Xi_{\bs,\bt}$ is a Cauchy sequence in $L_m$, which implies convergence.
In the $2$-dimensional case, it is slightly more tricky: While $\norm{(\cP_{\bn}-\cP_{(n_1+1,n_2+1)})\Xi_{\bs,\bt}}\lesssim 2^{-n_1\epsilon}+2^{-n_2\epsilon}$ would imply convergence along the sequence $(\bn+k\boldsymbol 1)_{k\in\NN}$, it is in general hard to show this kind of bound.

To get a better overview, let us simplify the problem even further and try to ``cut apart'' a single interval $[\bs,\bt]$ at a point $\bs\le\bu\le\bt$. Recall that the ``cut apart''-operators are defined by $\psi^1_\bu \Xi_{\bs,\bt} = \Xi_{\bs, (u_1,t_2)} + \Xi_{(u_1,s_2),\bt}$ and analogously, $\psi^2_\bu\Xi_{\bs,\bt} = \Xi_{\bs,(t_1,u_2)}+\Xi_{(s_1,u_2),\bt}$. If we want to pass from $\cP_\bn$ to $\cP_{(n_1+1,n_2+1)}$, we have to cut apart intervals of $\cP_\bn$ along both directions, so we will end up with objects of the form $\psi^1_\bu\psi_\bu^2\Xi_{\bs,\bt} = \psi^{(1,2)}_\bu\Xi_{\bs,\bt}$. So what we are actually trying to bind from above are terms of the form
\begin{equation}\label{term:Psi}
    \norm{\Xi_{\bs,\bt}-\psi^{(1,2)}_\bu \Xi_{\bs,\bt}}_m\,.
\end{equation}
However, in most situations, we only control the delta-operator $\norm{\delta_\bu^{(1,2)}\Xi_{\bs,\bt}}_m$. So our goal is to show that \eqref{eq:convergentSequnece} is Cauchy, as long as $\norm{\delta^\eta_\bu\Xi_{\bs,\bt}}_m$ is small enough for all $\emptyset\neq\eta\subset[d]$. Observe that we can decompose the term \eqref{term:Psi} into a sum over delta operators via 
\begin{equation*}
    \Id-\psi^{(1,2)}_\bu = (\Id- \psi^1_\bu) + (\Id - \psi_\bu^2) - (\Id-\psi_\bu^1)(\Id-\psi_\bu^2) = \delta^1_\bu + \delta^2_\bu - \delta^{\{1,2\}}_\bu
\end{equation*}
\begin{figure}
    \centering 
    \includegraphics[scale=1.5]{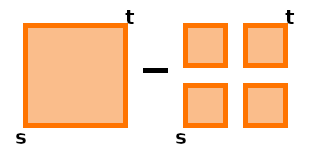}
    \caption{$\Xi_{\bs,\bt}-\psi^{(1,2)}_\bu\Xi_{\bs,\bt}$}
\end{figure}
Thus we can bound $\Id-\psi^{(1,2)}_\bu$ as long as the delta operators are bounded in a sufficient way. Analogously, we can show that the sequence $(\cP_\bn\Xi_{\bs,\bt})_{\bn\in\NN^d}$ is Cauchy, as long as we have sufficient bounds on $\norm{(\Id-I^1)\cP_\bn\Xi_{\bs,\bt}}_m$, $\norm{(\Id-I^2)\cP_\bn\Xi_{\bs,\bt}}_m$ and $\norm{(\Id-I^1)(\Id-I^2)\cP_\bn\Xi_{\bs,\bt}}_m$, where the operators $I^1\cP_\bn = \cP_{(n_1+1,n_2)}$ and $I^2\cP_\bn = \cP_{(n_1,n_2+1)}$ increase the respective index by one. We formalize this in Lemma \ref{lem:technical2}, which uses some new notation:

\begin{itemize}
    \item For an indexset $\theta\subset[d]$ and a multiindex $\bn\in\NN^d$, $\bn_\theta = \pi_\bn^\theta \boo$ is the multiindex given by $n_i$ for $i\in\theta$ and $0$, else.
    \item For a multiindex $\bn\in\NN^d$ and a set of rates $\balpha>\boo$, we set $\bn\balpha := (n_1\alpha_1,\dots,n_d\alpha_d)$ to be the pointwise product.
\end{itemize}

\begin{lem}\label{lem:technical2}
Let $(h_\bn)_{\bn\in\NN^{d}}$ be a multiparameter sequence in a normed space $\cH$. We define the operators $I^ih_\bn = h_{\bn+\{i\}}$ Assume that there is a $C>0$ and rates $\balpha = (\alpha_1,\dots,\alpha_d)>\boo$ such that for all $\bn\in\NN^d$ and $\emptyset\neq\theta\subset[d]$
\begin{equation}\label{ineq:Assumptionh}
    \norm{\prod_{i\in\theta} (\Id- I^i)h_{\bn_\theta}} \le Cm_{\balpha\bn_\theta}\left(0,\frac 12\right)\,,
\end{equation}
Then it holds that for all $\bn\le\bk$ and $\theta\subset[d]$
\begin{equation*}
    \norm{h_{\bn_\theta}-h_{\bk_\theta}}\lesssim C\sum_{i\in\theta}2^{-n_i\alpha_i}\,,
\end{equation*}
where the constant in $\lesssim$ might depend on $\balpha$. It especially follows that $(h_{\bn})$ is a Cauchy sequence.
\end{lem}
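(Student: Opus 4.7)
\medskip
\noindent\textbf{Proof plan.} The main idea is to reduce the claim to a standard telescoping argument, but the key obstruction is that the hypothesis only controls $\prod_{i\in\eta}(\Id-I^i)h$ at the ``corner'' multi-index $\bn_\eta$ (whose entries outside $\eta$ are zero), whereas a telescoping argument along the $\theta$-coordinates requires such bounds at arbitrary base points. I would therefore split the proof into two steps.

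\medskip
\noindent\textbf{Step 1 (Upgrade the hypothesis).} Observing that $h_{\bn_\theta}$ and $h_{\bk_\theta}$ both lie in the sub-lattice $\{\bm\in\NN^d:m_i=0\text{ for }i\notin\theta\}$, and that the hypothesis applied to $\eta\subset\theta$ provides all relevant bounds on this sub-lattice, one reduces to the case $\theta=[d]$. In this setting I claim that for every $\emptyset\neq\eta\subset[d]$ and every $\bm\in\NN^d$,
\[
\Bigl\|\prod_{i\in\eta}(\Id-I^i)h_\bm\Bigr\|\lesssim C\prod_{i\in\eta}2^{-\alpha_i m_i},
\]
which I would show by downward induction on $|\eta|$. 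The base case $|\eta|=d$ is the hypothesis since $\bm_{[d]}=\bm$. For the inductive step, pick $j\notin\eta$ and exploit the algebraic identity
\[
\prod_{i\in\eta}(\Id-I^i)h_{\bm}=\prod_{i\in\eta}(\Id-I^i)h_{\bm-\{j\}}-\prod_{i\in\eta\cup\{j\}}(\Id-I^i)h_{\bm-\{j\}}.
\]
Iterating this $m_j$ times along the $j$-axis reduces the base point to $\bm$ with its $j$-coordinate set to zero, at the cost of a sum of $m_j$ terms involving $\prod_{i\in\eta\cup\{j\}}(\Id-I^i)h$. The inductive hypothesis (at size $|\eta|+1$) together with the geometric sum $\sum_{k=1}^{m_j}2^{-\alpha_j(m_j-k)}\lesssim 1$ bounds these. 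Repeating this reduction for each $j\in[d]\setminus\eta$ eventually brings the base point to $\bm_\eta$, where the original hypothesis applies.

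\medskip
\noindent\textbf{Step 2 (Telescoping).} Applying Step~1 with $|\eta|=1$ yields $\|(\Id-I^i)h_\bm\|\lesssim C\,2^{-\alpha_i m_i}$ for every $i\in[d]$ and $\bm\in\NN^d$. For $\bn\le\bk$ I would then write
\[
h_{\bn_\theta}-h_{\bk_\theta}=\sum_{l=1}^s\bigl(h_{\bp^{(l-1)}}-h_{\bp^{(l)}}\bigr),
\]
where $\theta=\{i_1,\dots,i_s\}$ and $\bp^{(l)}$ is obtained from $\bp^{(l-1)}$ by replacing its $i_l$-coordinate by $k_{i_l}$, starting from $\bp^{(0)}=\bn_\theta$. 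Each bracket telescopes along a single coordinate:
\[
h_{\bp^{(l-1)}}-h_{\bp^{(l)}}=\sum_{q=n_{i_l}}^{k_{i_l}-1}(\Id-I^{i_l})h_{\bp^{(l-1)}+(q-n_{i_l})\{i_l\}},
\]
and the one-dimensional bound from Step~1 together with the geometric summation $\sum_{q\ge n_{i_l}}2^{-\alpha_{i_l}q}\lesssim 2^{-\alpha_{i_l}n_{i_l}}$ yields the claim after summing over $l$.

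\medskip
\noindent\textbf{Main difficulty.} The nontrivial part is Step~1: the hypothesis only controls full rectangular differences at their ``corners'', and propagating this to arbitrary base points requires the downward induction on $|\eta|$. The induction succeeds precisely because higher-order differences decay faster by hypothesis, which makes the accumulated reduction errors summable.
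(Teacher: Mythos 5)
Your proposal is correct and follows essentially the same route as the paper: your Step 1 is the paper's Step 1 in all but presentation — the same recursion $\prod_{i\in\eta}(\Id-I^i)h_{\bm}=\prod_{i\in\eta}(\Id-I^i)h_{\pi^j_0\bm}-\sum_{k=0}^{m_j-1}\prod_{i\in\eta\cup\{j\}}(\Id-I^i)h_{\pi^j_k\bm}$ with the geometric sum absorbed into the constant, merely run as a downward induction on $\abs{\eta}$ after reducing to $\theta=[d]$ rather than as an induction on $\abs{\theta\setminus\eta}$. The only divergence is in the final assembly: where the paper first bounds the mixed differences $\prod_{i\in\eta}(\Id-I^i_{k_i})h$ and then passes to $\Id-I^{\eta}_{\bk}$ via inclusion–exclusion (its Steps 2–3), you telescope $h_{\bn_\theta}-h_{\bk_\theta}$ one coordinate at a time using only the $\abs{\eta}=1$ bound from Step 1 — this is equally valid and slightly more direct for the stated conclusion, though note the paper's intermediate estimate on $\prod_{i\in\eta}(\Id-I^i_{k_i})h$ is reused later in the proof of the sewing lemma itself.
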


\begin{proof}
    The proof consists of 3 steps: One first shows that our bound on $\norm{\prod_{i\in\theta}(\Id-I^i)h_{n_\theta}}$ implies a similar bound on $\norm{\prod_{i\in\eta}(\Id-I^i)h_{n_\theta}}$ for all $\emptyset\neq\eta\subset\theta$. In the second step, we introduce the operator $I^i_{k_i}h_\bn := (I^i)^{k_i}h_\bn = h_{\bn+k_i\{i\}}$ and bound the term $\norm{\prod_{i\in\eta}(\Id- I^i_{k_i})h_{\bn_\theta}}$ for all $k_i\ge 1$, $i=1,\dots, d$. In the third step, we finally conclude that this implies a bound on $\norm{(\Id-I^\eta_\bk)h_{\bn_\theta}}$ where $I^\eta_\bk h_\bn = \left(\prod_{i\in\eta}I^i_{k_i}\right)h_\bn = h_{\bn+\bk_\eta}$, which concludes the proof.

\bigskip

    \noindent \emph{Step 1:} We claim that for any index-sets $\emptyset\neq\eta\subset\theta\subset[d]$, we have
    \begin{equation*}
        \norm{\prod_{i\in\eta}(\Id-I^i)h_{\bn_\theta}}\lesssim Cm_{\balpha\bn_\eta}\left(0,\frac 12\right)\,.
    \end{equation*}
    This can be shown by induction over $\abs{\theta\setminus\eta}$. For $\theta = \eta$, this is just \eqref{ineq:Assumptionh}. For $\eta\subsetneq\theta$, let $j\in\theta\setminus\eta$ and use the decomposition
    \begin{equation*}
        \prod_{i\in\eta}(\Id-I^i)h_{\bn_\theta} = \prod_{i\in\eta}(\Id-I^i)h_{\bn_{\theta\setminus\{j\}}} - \sum_{k=0}^{n_j-1}\prod_{i\in\eta\cup\{j\}}(\Id-I^i)h_{{\pi^j_k\bn_\theta}}\,.
    \end{equation*}
    The induction hypothesis  then implies
    \begin{equation*}
        \norm{\prod_{i\in\eta}(\Id-I^i)h_{\bn_\theta}}\lesssim \left(1+\sum_{k=0}^{n_j-1}2^{-k\alpha_j}\right)Cm_{\balpha\bn_\eta}\left(0,\frac 12\right)\,,
    \end{equation*}
    which shows the claim (recall that the constant in $\lesssim$ is allowed to depend on $\balpha$). 

\bigskip

    \noindent\emph{Step 2:} Let $\eta,\theta$ as in step 1, and assume that $k_i\ge 1$ for all $i\in\eta$. We use the decomposition $(\Id-I^i_{k_i}) = \sum_{l_i = 0}^{k_i-1}(\Id-I^i)I^i_{l_i}$ and calculate directly
    \begin{align}
        \norm{\prod_{i\in\eta} (\Id-I^i_{k_i})h_{\bn_\theta}} &= \norm{\prod_{i\in\eta} \sum_{l_i = 0}^{k_i-1}(\Id-I^i)I^i_{l_i}h_{\bn_\theta}}\nonumber\\
        &\le \sum_{\bl_\eta=\boo}^{\bk_\eta-\eta}\norm{\prod_{i\in\eta}(\Id-I^i)h_{\bn_\theta+\bl_\eta}}\nonumber\\
        &\lesssim \left(\sum_{\bl_\eta = \boo}^{\bk_\eta-\eta}m_{\balpha\bl_\eta}\left(0,\frac 12\right)\right) Cm_{\balpha\bn_\eta}\left(0,\frac12\right)\\
        &\lesssim  Cm_{\balpha\bn_\eta}\left(0,\frac12\right)\,.\label{ineq:prodKI}
    \end{align}

\bigskip

    \noindent\emph{Step 3:} We claim that for all $\eta\subset\theta\subset[d]$ as well as multiindexes $\bn,\bk$ with $k_i\ge 1$ for all $i\in\eta$, it holds that
    \begin{equation*}
        \norm{h_{\bn_\theta}-h_{\bn_\theta+\bk_\eta}} = \norm{(\Id-I^\eta_\bk)h_{\bn_\theta}}\lesssim C\sum_{i\in\eta}2^{-n_i\alpha_i}\,,
    \end{equation*}
    which proofs the lemma by setting $\eta = \theta$. The claim will be shown via induction over $\abs{\eta}$. The case $\eta = \emptyset$ is clear. For non-empty $\eta$, observe that 
    \begin{equation*}
        \prod_{i\in\eta}(\Id-I^i_{k_i}) = \sum_{\tilde\eta\subset\eta}(-1)^{\abs{\tilde\eta}} I^{\tilde\eta}_{\bk} = \sum_{\tilde\eta\subset\eta}(-1)^{\abs{\tilde\eta}} (I^{\tilde\eta}_{\bk}-\Id)\,,
    \end{equation*}
    where we used that for all finite sets $\theta$, $\sum_{\eta\subset\theta}(-1)^{\abs\eta} =0$. Rearranging those terms allows us to calculate
    \begin{align}\begin{split}\label{calc3technicalLema}
        \norm{(\Id-I^\eta_\bk)h_{\bn_\theta}} &\le \norm{\prod_{i\in\eta}(\Id-I^i_{k_i})h_{\bn_\theta}} + \sum_{\tilde\eta\subsetneq \eta}\norm{(\Id-I^{\tilde\eta}_{\bk})h_{\bn_\theta}} \\
        &\lesssim C\sum_{i\in\eta}2^{-n_i\alpha}\,,
    \end{split}\end{align}
    by the result of step 2 and the induction hypothesis.
\end{proof}

\subsection{Stochastic multiparameter sewing in the simplified setting}

While we aim to prove the multiparameter stochastic sewing lemma in the most general setting possible, we also want to present a simplified, yet very practical version: In Section \ref{sec_Local_Time}, we will apply the sewing lemma to the two-parameter process $\Xi_{\bs,\bt} = \EE_\bs^{[d]}\int_{[\bs,\bt]} X(\br) d\br$ for a certain stochastic process $X:[0,\bT]\times\Omega\to\RR$. By the tower property and using that our filtration is commuting, it is easy to see that all expected values $\EE_\bs^\eta \delta^\theta_\br\Xi_{\bs,\bt} = 0$ vanish for all $\emptyset\neq\eta\subset\theta\subset[d]$. This drastically simplifies technical parts of the sewing lemma, while keeping all its ideas. Therefore, we present the proof in this setting.

\begin{lem}[Stochastic multiparameter sewing, simplified version]\label{lem:stochastic sewing simplified}
    Let $\Xi\in C_2^{\alpha,\beta}L_m$ for $\alpha\in(0,1)$, $\beta\in(\frac 12,\infty)$ with $\beta\geq\alpha$ and assume that for all $\emptyset\neq\eta\subset\theta\subset[d]$, it holds that
    \begin{equation}
        \EE^\eta_\bs\delta^\theta_\br\Xi_{\bs,\bt} = 0\label{eq:expDelta=0}
    \end{equation}
    for all $\bs\le\br\le\bt$. Further, let $\cP_\bn^{[d]} = \prod_{i\in[d]} \cP^i_{n_i}$ be the $\bn$-th dyadic partition of $[\bs,\bt]$. Then the sequence $(\cP^\theta_\bn\Xi_{\bs,\bt})_\bn$ given by 
    \begin{equation*}
     \cP^\theta_\bn\Xi_{\bs,\bt}:=\sum_{[\bu,\bv]\in\cP^\theta_\bn}\Xi_{\bu,\bv}
    \end{equation*}
    converges in $L_m(\Omega)$ as $\bn\to \infty$. We denote the corresponding limit by $ \cI^\theta\Xi_{\bs,\bt}$, which we also call $\theta$-sewing of the germ $\Xi$. In the case $\theta=[d]$, we call $\mathcal{I}^{[d]}\Xi_{\bs, \bt}$ the sewing of $\Xi$.  Furthermore, $(\cI^\theta\Xi)_{\theta\subset[d]}$ is the unique family of two-parameter processes, such that:
    \begin{enumerate}[label = (\roman*)]
        \item $\cI^\theta \Xi_{\bs,\bt}$ is $\cF_\bt$-measurable for all $\bs\le\bt$.
        \item $\cI^\emptyset\Xi = \Xi$
        \item $\cI^\theta\Xi$ is a additive in $[\boo,\bT_\theta]$, i.e. $\delta^\eta_\bu\cI^\theta\Xi_{\bs,\bt} = 0$ for all $\bs\le\bu\le\bt$, $\emptyset\neq\eta\subset\theta\subset[d]$.
        \item $\exists C>0$ such that the following holds for all $\theta\subset[d]$ and $\bs\le\bt$:
        \begin{equation}
            \norm{\sum_{\theta'\subset\theta} (-1)^{\abs {\theta'}}\cI^{\theta'}\Xi_{\bs,\bt}}_m \le C\norm{\Xi}_{\alpha,\beta,m}m_{\alpha\theta^c+\beta\theta}(\bs,\bt)\,,\label{eq:UniquePropertySimplified1}
        \end{equation}
        and for all $\emptyset\neq\eta\subset\theta$, we have
        \begin{equation}
            \EE^\eta_\bs\sum_{\theta'\subset\theta} (-1)^{\abs{\theta'}}\cI^{\theta'}\Xi_{\bs,\bt} = 0\,.\label{eq:UniquePropertySimplified2}
        \end{equation}
    \end{enumerate}
\end{lem}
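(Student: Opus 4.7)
Fix $\theta \subset [d]$. My plan is to establish the existence of the limit $\cI^\theta \Xi_{\bs,\bt} := \lim_\bn \cP_\bn^\theta \Xi_{\bs,\bt}$ in $L_m(\Omega)$ by invoking the multiparameter Cauchy criterion of Lemma~\ref{lem:technical2} applied to the sequence $h_\bn := \cP_\bn^\theta \Xi_{\bs,\bt}$ in the Banach space $L_m(\Omega)$ (the refinement is vacuous in directions outside $\theta$, so we effectively work in dimension $|\theta|$). Once $\cI^\theta \Xi$ has been produced, the defining properties (i)--(iii) transfer from the approximating Riemann sums, while the bounds (iv) and uniqueness follow from a short induction on $|\theta|$ combined with one more use of the same BDG estimate.

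The central computation concerns the refinement increment. For $\emptyset \neq \theta' \subset \theta$, unwinding the definitions (and using the identity $\Id - \psi^i_\bu = \delta^i_\bu$) gives
\begin{equation*}
    \prod_{i \in \theta'}(\Id - I^i)\,\cP_{\bn_{\theta'}}^\theta \Xi_{\bs,\bt} = \sum_{[\bu,\bv]\in\cP_{\bn_{\theta'}}^\theta} \delta^{\theta'}_{\bw(\bu,\bv)} \Xi_{\bu,\bv},
\end{equation*}
where $\bw(\bu,\bv)$ denotes the midpoint of $[\bu,\bv]$ in the $\theta'$-coordinates. By hypothesis \eqref{eq:expDelta=0} (applied with $\eta = \{i\}$ and $\theta$ replaced by $\theta'$), each summand $Z_{\bu,\bv} := \delta^{\theta'}_\bw \Xi_{\bu,\bv}$ satisfies $\EE^i_\bu Z_{\bu,\bv} = 0$ for every $i \in \theta'$, so the simplified multiparameter BDG of Lemma~\ref{lem:muti-parameterBDG-simplified} applies in the $\theta'$-directions. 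Combining its estimate with the bound $\norm{\delta^{\theta'}_\bw \Xi_{\bu,\bv}}_m \lesssim \norm{\Xi}_{\alpha,\beta,m}\,m_{\beta\theta'+\alpha(\theta')^c}(\bu,\bv)$ and the fact that on $\cP_{\bn_{\theta'}}^\theta$ one has $|v_i-u_i| = (t_i-s_i)2^{-n_i}$ for $i \in \theta'$ and $|v_i-u_i| = t_i-s_i$ otherwise, a direct count over the $\prod_{i \in \theta'} 2^{n_i}$ rectangles yields
\begin{equation*}
    \norm{\prod_{i \in \theta'}(\Id-I^i)\,\cP_{\bn_{\theta'}}^\theta \Xi_{\bs,\bt}}_m \lesssim C(\bs,\bt,\Xi)\prod_{i\in\theta'}2^{-n_i(\beta-1/2)}.
\end{equation*}
Since $\beta > 1/2$, this is exactly the geometric decay required by Lemma~\ref{lem:technical2}, which therefore produces a Cauchy sequence in $L_m(\Omega)$ and hence the limit $\cI^\theta \Xi_{\bs,\bt}$.

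The remaining properties are handled as follows. $\cF_\bt$-measurability (i) is preserved under $L_m$-limits since the filtration is complete; (ii) is tautological because $\cP_\bn^\emptyset \Xi = \Xi$; for additivity (iii), observe that $\delta^i_\bu \cP_\bn^\theta \Xi_{\bs,\bt} = 0$ whenever $u_i$ lies in the $i$-th marginal of $\cP_\bn$, and then redo the construction along any dyadic sequence containing $u_i$ to cover an arbitrary $\bu$. For (iv), induct on $|\theta|$: expressing the alternating sum $R^\theta_{\bs,\bt} = \sum_{\theta'\subset\theta}(-1)^{|\theta'|}\cI^{\theta'}\Xi_{\bs,\bt}$ as the $L_m$-limit of a Riemann-type sum of $\delta^\theta$-increments of $\Xi$ over a grid-like partition of $[\bs,\bt]$, and reapplying Lemma~\ref{lem:muti-parameterBDG-simplified} in the $\theta$-directions, gives \eqref{eq:UniquePropertySimplified1}; \eqref{eq:UniquePropertySimplified2} then follows by commuting $\EE^\eta_\bs$ with the finite sums and invoking \eqref{eq:expDelta=0}. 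For uniqueness, any second family $(\tilde\cI^\theta\Xi)$ with the same properties yields differences $D^\theta := \cI^\theta\Xi - \tilde\cI^\theta\Xi$ with $D^\emptyset = 0$, additive in the $\theta$-directions, and by induction satisfying $\norm{D^\theta_{\bs,\bt}}_m \lesssim m_{\alpha\theta^c+\beta\theta}(\bs,\bt)$ together with $\EE^i_\bs D^\theta_{\bs,\bt}=0$ for every $i\in\theta$; using additivity to write $D^\theta_{\bs,\bt} = \cP_\bn^\theta D^\theta_{\bs,\bt}$ and applying the BDG estimate one final time drives the right-hand side to $0$ in $L_m$ as $\bn\to\infty$, since $\beta > 1/2$.

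The hard part will be the careful bookkeeping required to align the indexing of $\cP_\bn^\theta$ with that of Lemma~\ref{lem:technical2} (which effectively lives in dimension $|\theta|$ once the $\theta^c$-coordinates are frozen) and to check that every product of refinement operators collapses to exactly the right $\delta$-increment so that vanishing-conditional-expectation BDG applies in the correct coordinates. The simplifying assumption \eqref{eq:expDelta=0} is precisely what makes each BDG step automatic; its failure is the reason why the general sewing lemma treated in Subsection~\ref{sec:general_sewing_lemma} requires substantially more work.
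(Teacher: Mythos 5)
Your proposal follows the paper's proof essentially verbatim: the same identity expressing the refinement differences $\prod_{i\in\theta'}(\Id-I^i)\cP^\theta_{\bn_{\theta'}}\Xi_{\bs,\bt}$ as Riemann sums of $\delta^{\theta'}$-increments, the same application of Lemma \ref{lem:muti-parameterBDG-simplified} combined with Lemma \ref{lem:technical2} to get the Cauchy property and the bound \eqref{eq:UniquePropertySimplified1}, and the same induction over $\abs{\theta}$ for uniqueness. The only point where the paper supplies detail that you elide is additivity (iii): besides noting that $\delta^i_{u_i}$ annihilates Riemann sums over partitions containing $u_i$, one must also verify (via one more application of the same BDG estimate) that the modified partition sequence converges to the same limit $\cI^\theta\Xi$ as the dyadic one.
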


\begin{rem}
    It should be noted that there already exists a stochastic reconstruction theorem \cite{kern2023stochastic}, which is able to deal with dimension $d>1$. But unlike the one-dimensional case, the stochastic multiparameter sewing lemma is not a special case of the stochastic reconstruction theorem. This is due to the fact, that most stochastic processes have better-behaved square increments than increments, and the multiparameter sewing lemma is better suited to deal with the square increments. Indeed, consider the construction of the integral $\int_{\bs}^\bt B_\br dX_\br$ for a deterministic process $X$ and a Brownian sheet $B$. Then the stochastic reconstruction theorem will require that the conditional expectation
    \begin{align*}
        \EE^1_\bx((B_\by - B_\bx)\square_{\by,\bz} X)\\
        \EE^2_\bx((B_\by - B_\bx)\square_{\by,\bz} X)
    \end{align*}
    vanishes for $\bx\le\by\le\bz$ (or at least has good enough $L_m$-norm), and will thus fail. The stochastic multiparameter sewing lemma on the other hand needs
    \begin{gather*}
        \EE^1_\bx(\delta^1_{\bx,\by} B\square_{(y_1,x_2),\bz} X)\\
        \EE^2_\bx(\delta^2_{\bx,\by} B\square_{(x_1,y_2),\bz} X)\\
        \EE^\eta_\bx(\square_{\bx,\by} B\square_{\by,\bz} X)
    \end{gather*}
    to vanish for all $\emptyset\neq\eta\subset\{1,2\}$, which it indeed does. In general, if one is interested in constructing stochastic integrals, our lemma seems to be better suited to deal with situations in which the stochastic properties come from the integrand rather than the integrator.
\end{rem}

\noindent
Just as in most sewing lemmas, it holds that $\cI^\theta\Xi_{\bs,\bt} = L_m-\lim_{\bn\to\infty}\sum_{[\bu,\bv]\in\cP^\theta_\bn}\Xi_{\bu,\bv}$ not just for the dyadic partitions $\cP_\bn$, but for all series of grid-like partitions, such that the mesh $\abs{\cP_\bn}$ vanishes as $\bn\to\infty$. The following lemma makes this rigorous:

\begin{lem}\label{lem:every partition converges}
    Let $\Xi$ be as in Lemma \ref{lem:stochastic sewing simplified}. Let $\cQ_\bn$ be any sequence of grid-like partitions of the interval $[\bs,\bt]$ with mesh $\abs{\cQ_\bn}\to 0$ as $\bn\to\infty$. Then it follows, that
    \begin{equation*}
        \cQ^\theta_\bn\Xi_{\bs,\bt} = \sum_{[\bu,\bv]\in\cQ_\bn^\theta}\Xi_{\bu,\bv} \xrightarrow{\bn\to\infty} \cI^\theta\Xi_{\bs,\bt},
    \end{equation*}
converges in $L_m$ for all $\theta\subset[d]$.
\end{lem}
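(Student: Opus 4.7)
The plan is to reduce the claim to the dyadic case already established in Lemma \ref{lem:stochastic sewing simplified}, by comparing an arbitrary partition with a dyadic partition of comparable mesh via a common refinement. Concretely, given $\cQ_\bn$ with $\abs{\cQ_\bn}\to 0$, I choose a sequence of dyadic partitions $\cP_{\bm(\bn)}$ with $\abs{\cP_{\bm(\bn)}}\le\abs{\cQ_\bn}$, which forces $\bm(\bn)\to\infty$ componentwise. Form the common refinement $\cR_\bn$ obtained by taking the union of partition points in each coordinate direction; it is again grid-like and refines both $\cQ_\bn$ and $\cP_{\bm(\bn)}$. Since Lemma \ref{lem:stochastic sewing simplified} already yields $\cP^\theta_{\bm(\bn)}\Xi_{\bs,\bt}\to\cI^\theta\Xi_{\bs,\bt}$ in $L_m$, it is enough to show
\begin{equation*}
\norm{\cR^\theta_\bn\Xi_{\bs,\bt}-\cQ^\theta_\bn\Xi_{\bs,\bt}}_m\to 0\qquad\text{and}\qquad \norm{\cR^\theta_\bn\Xi_{\bs,\bt}-\cP^\theta_{\bm(\bn)}\Xi_{\bs,\bt}}_m\to 0\,.
\end{equation*}

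The key technical step is to control the effect of refining a grid-like partition by adding, one coordinate direction at a time, the missing points. For a fixed $i\in\theta$, adding all new points $\{w^i_1<\dots<w^i_{N_i}\}\subset(s_i,t_i)$ to the $i$-th component partition transforms the Riemann sum by a sum of terms of the form $-\delta^i_{\bw}\Xi_{\bu,\bv}$, indexed by pairs of added splitting points and the surviving rectangles in the other directions. Because assumption \eqref{eq:expDelta=0} yields $\EE^i_{\bu}\delta^i_\bw\Xi_{\bu,\bv}=0$, this sum is, along the $i$-th parameter, a martingale-difference array with respect to $(\cF^i_\bt)$. Applying the BDG-type inequality of Lemma \ref{lem:muti-parameterBDG-simplified} in the direction $i$ (and iterating a classical Minkowski argument over the remaining directions in $\theta\setminus\{i\}$, for which the sums are not martingale differences but simply summed with the triangle inequality), together with $\norm{\delta^i_\bw\Xi_{\bu,\bv}}_m\lesssim\norm{\Xi}_{\alpha,\beta,m}\,m_{\beta\{i\}+\alpha\theta^c+\alpha(\theta\setminus\{i\})}(\bu,\bv)$ and $\beta>\tfrac12$, one gains a positive power of the mesh $\abs{\cQ_\bn}^{\beta-1/2}$, which vanishes as $\bn\to\infty$. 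Iterating this over the $|\theta|$ coordinate directions completes the bound on one of the two differences above, and the second one is treated identically with the roles of $\cQ_\bn$ and $\cP_{\bm(\bn)}$ swapped.

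The main obstacle I expect is the combinatorial and filtration bookkeeping when adding many points at once in a single direction: adding them one at a time and summing $L_m$-norms would lose the key factor coming from BDG and only yield a bound of order $\abs{\cQ_\bn}^{\beta-1}$, which is not summable when $\beta\in(\tfrac12,1)$. Hence one must genuinely treat all added points in direction $i$ as a single martingale-difference sum against $(\cF^i_\bt)_\bt$, and the intermediate partially-refined partitions must remain grid-like so that Lemma \ref{lem:muti-parameterBDG-simplified} is applicable with the correct adaptedness. Once this is set up carefully, uniqueness of the limit via property (iv) in Lemma \ref{lem:stochastic sewing simplified} is no longer needed, since the identification $\cI^\theta\Xi_{\bs,\bt}$ is obtained directly through the dyadic comparison.
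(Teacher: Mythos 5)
Your overall strategy --- comparing $\cQ_\bn$ with a dyadic partition through a common refinement and extracting a gain of $\abs{\cQ_\bn}^{\beta-1/2}$ from a BDG-type inequality --- is the same as the paper's, but the key technical step is not correct as stated, and it is exactly where the real work lies. When you refine a single cell $[\bu,\bv]$ in direction $i$ by inserting points $w_1<\dots<w_L\in(u_i,v_i)$, the exact algebraic identity is
\begin{equation*}
\Xi_{\bu,\bv}-\sum_{l=0}^{L}\Xi_{\pi^i_{w_l}\bu,\pi^i_{w_{l+1}}\bv}=\sum_{l=1}^{L}\delta^i_{w_l}\Xi_{\bu,\pi^i_{w_{l+1}}\bv}\qquad(w_0=u_i,\ w_{L+1}=v_i)\,,
\end{equation*}
so every $\delta$-term is anchored at the left endpoint $u_i$ of the \emph{original} cell, not at the preceding splitting point. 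Assumption \eqref{eq:expDelta=0} gives $\EE^i_{u_i}\delta^i_{w_l}\Xi_{\bu,\pi^i_{w_{l+1}}\bv}=0$ but says nothing about $\EE^i_{w_l}$ of that term, so the family indexed by $l$ is \emph{not} a martingale-difference array and Lemma \ref{lem:muti-parameterBDG-simplified} does not apply to it; the right-anchored version of the telescoping has vanishing conditional expectations at the correct points but fails the adaptedness requirement instead. Estimating within the cell by the triangle inequality costs the factor $L$ you explicitly say you must avoid. The paper circumvents this in Lemma \ref{lem:technical1} by refining each cell of $\cQ_\bn$ with a \emph{dyadic} partition and decomposing the refinement level by level via $\Id-\cP^i_{k_i}=(\Id-\cP^i_{n_i})+\sum_{m}(\Id-I^i)\cP^i_m$: at each level the inserted points are midpoints of the current dyadic cells, the resulting $\delta$-terms are anchored at the corners of those cells and hence genuinely form a martingale-difference array, BDG applies level by level, and the levels are summed as a geometric series in $2^{-m(\beta-\frac12)}$. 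Only then is BDG applied once more across the cells of $\cQ_\bn$ to produce $\prod_{i\in\theta}\abs{\cQ^i_{n_i}}^{\beta-\frac12}$.

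A second gap is the choice $\abs{\cP_{\bm(\bn)}}\le\abs{\cQ_\bn}$ together with the claim that the comparison of $\cR_\bn$ with $\cP_{\bm(\bn)}$ is ``identical with the roles swapped''. The situation is not symmetric: the number of points of $\cQ_\bn$ is not controlled by its mesh, so a single cell of $\cP_{\bm(\bn)}$ may receive arbitrarily many points of $\cQ_\bn$, and the resulting per-cell sums are neither martingale differences nor small. The paper instead takes the dyadic partition $\cP_{\bk(\bn)}$ so fine (depending on $\#\cQ_\bn$, not merely on $\abs{\cQ_\bn}$) that each dyadic cell contains at most one point of $\cQ_\bn$; the difference $(\cA^\theta-\cP^\theta_\bk)\Xi_{\bs,\bt}$ then consists of $\#\cQ_\bn$ single $\delta$-terms at scale $2^{-k_i}$ and is controlled by the crude bound $\#\cQ_\bn\,m_{-\bk_\theta\beta}(0,2)\,m_{\alpha\theta^c+\beta\theta}(\bs,\bt)$, no martingale structure needed. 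Your reduction to the dyadic case and the identification of the limit are fine; what is missing is the hierarchical decomposition that actually produces martingale differences, and a choice of dyadic mesh adapted to the cardinality of $\cQ_\bn$.
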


\noindent In fact, we will use an even more restrictive case later on, in which $\alpha = \beta$ holds. In this case, it suffices to find a bound for $\norm{\Xi_{\bs,\bt}}_m$ instead of $\norm{\delta^\theta_\bu\Xi_{\bs,\bt}}_m$ for all $\emptyset\neq\theta\subset[d]$, simplifying the sewing lemma even further. Recall that $C^\alpha_2 L_m$ is the space of adapted (with respect to some commuting filtration) 2d-parameter processes $\Xi_{\bs,\bt}$, such that $\norm{\Xi_{\bs,\bt}}_m\lesssim m_{\alpha[d]}(\bs,\bt)$.

\begin{cor}\label{cor:sewing_alpha=beta}
    Let $\alpha>\frac 12$ and $\Xi_{\bs,\bt}\in C^\alpha_2 L_m$. Assume that
    \begin{align*}
        \EE^\eta_\bs\delta_\br^\theta\Xi_{\bs,\bt} &= 0
    \end{align*}
    holds for all $\bs\le\br\le\bt$ and $\emptyset\neq\eta\subset\theta\subset[d]$. Then for any $\theta\subset[d]$ the Riemann-series $(\cP^\theta_\bn\Xi_{\bs,\bt})_\bn$  given by
    \begin{equation*}
       \cP^\theta_\bn\Xi_{\bs,\bt}= \sum_{[\bu,\bv]\in\cP^\theta_\bn} \Xi_{\bu,\bv}
    \end{equation*}
    converges for all grid-like partitions $\cP_\bn$ with vanishing mesh size $\abs{\cP_\bn}$ to the unique limit  $\mathcal{I}^{\theta}\Xi_{\bs, \bt}$ (independent of $(\cP_\bn)_{\bn\in\NN^d}$), as $\bn$ goes to $\infty$. As before, the limits $(\cI^\theta\Xi_{\bs,\bt})_{\theta\subset[d]}$ are the unique family of multiparameter processes fulfilling properties $(i) - (iii)$ from Lemma \ref{lem:stochastic sewing simplified} as well as

    \begin{itemize}
        \item[(iv')] $\exists C>0$ such that for all $\theta\subset[d]$ and $\bs\le\bt$, we have
        \begin{equation}\label{ineq:uniqueness23}
            \norm{\cI^\theta \Xi_{\bs,\bt}}_m \le C\norm{\Xi}_{\alpha,m}m_{\alpha[d]}(\bs,\bt)\,,
        \end{equation}
        and for all $\emptyset\neq\eta\subset\theta$, we have
        \begin{equation}
            \EE^\eta_\bs\sum_{\theta'\subset\theta} (-1)^{\abs{\theta'}}\cI^{\theta'}\Xi_{\bs,\bt} = 0\,.\label{eq:uniqueness23}
        \end{equation}
    \end{itemize}
\end{cor}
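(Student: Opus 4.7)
The plan is to reduce this corollary to Lemma \ref{lem:stochastic sewing simplified} applied with $\beta = \alpha$. As remarked in the paper right before the definition of $C_2^\alpha L_m$, when $\alpha = \beta$ and $\gamma=0$ the delta-operator bounds in the definition of $C_2^{\alpha,\beta} L_m$ become redundant: since $\delta^\theta_\br \Xi_{\bs,\bt}$ is a signed sum of at most $2^{\abs\theta}$ terms of the form $\Xi_{\bx,\by}$ with $\bs \le \bx \le \by \le \bt$, each term is bounded in $L_m$ by $\norm{\Xi}_{\alpha,m} m_{\alpha[d]}(\bx,\by) \le \norm{\Xi}_{\alpha,m} m_{\alpha[d]}(\bs,\bt)$, which is exactly the bound required since $\beta\theta + \alpha\theta^c = \alpha[d]$. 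Together with the hypothesis $\EE^\eta_\bs \delta^\theta_\br \Xi_{\bs,\bt} = 0$ for all $\emptyset \ne \eta \subset \theta$, this places $\Xi$ in the hypothesis of Lemma \ref{lem:stochastic sewing simplified}.

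Invoking Lemma \ref{lem:stochastic sewing simplified} with $\beta = \alpha$ thus yields for every $\theta \subset [d]$ an $L_m$-limit $\cI^\theta \Xi_{\bs,\bt}$ of the dyadic Riemann sums $\cP^\theta_\bn \Xi_{\bs,\bt}$ satisfying properties (i)-(iv) there. To extend $L_m$-convergence to arbitrary grid-like partitions $\cQ_\bn$ with vanishing mesh, I apply Lemma \ref{lem:every partition converges} directly.

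It remains to promote the characterization from (iv) of Lemma \ref{lem:stochastic sewing simplified} to (iv'). Note that \eqref{eq:uniqueness23} is literally \eqref{eq:UniquePropertySimplified2}, so only the sharper bound \eqref{ineq:uniqueness23} requires proof. I establish this by induction on $\abs\theta$: the base case $\theta = \emptyset$ is immediate since $\cI^\emptyset \Xi = \Xi \in C_2^\alpha L_m$. For the inductive step, I isolate $\cI^\theta \Xi_{\bs,\bt}$ from the alternating sum via
\[
\cI^\theta \Xi_{\bs,\bt} = (-1)^{\abs\theta}\sum_{\theta' \subset \theta}(-1)^{\abs{\theta'}}\cI^{\theta'}\Xi_{\bs,\bt} \; - \sum_{\theta' \subsetneq \theta}(-1)^{\abs{\theta'}+\abs\theta}\cI^{\theta'}\Xi_{\bs,\bt}.
\]
By \eqref{eq:UniquePropertySimplified1} the first sum has $L_m$-norm controlled by $C \norm{\Xi}_{\alpha,\alpha,m}\, m_{\alpha[d]}(\bs,\bt)$, and by the induction hypothesis each $\cI^{\theta'}\Xi_{\bs,\bt}$ with $\theta' \subsetneq \theta$ satisfies the same bound, yielding \eqref{ineq:uniqueness23}.

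For uniqueness, any candidate family $(\tilde\cI^\theta \Xi)_\theta$ fulfilling (i)-(iii) and (iv') satisfies (iv) of Lemma \ref{lem:stochastic sewing simplified} via the triangle inequality applied to \eqref{ineq:uniqueness23}: with $\beta = \alpha$ the right-hand side of \eqref{eq:UniquePropertySimplified1} collapses to a multiple of $m_{\alpha[d]}(\bs,\bt)$, which is precisely what the triangle inequality produces. The uniqueness assertion in Lemma \ref{lem:stochastic sewing simplified} then forces $\tilde\cI^\theta \Xi = \cI^\theta \Xi$ for every $\theta \subset [d]$. The main challenge is bookkeeping—tracking the inductive structure on $\theta$ and the combinatorial signs appearing in the $\delta$-expansion—since all the substantive analytic and probabilistic content is already encoded in Lemma \ref{lem:stochastic sewing simplified}.
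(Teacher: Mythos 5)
Your proof is correct and takes essentially the same route as the paper: reduce to Lemma \ref{lem:stochastic sewing simplified} with $\beta=\alpha$ (using that the $\delta$-bounds are automatic in $C_2^\alpha L_m$) together with Lemma \ref{lem:every partition converges}, and then prove the equivalence of (iv) and (iv') — the triangle inequality in one direction and the identical induction on $\abs{\theta}$ with the same alternating-sum decomposition in the other. (One immaterial slip: $\delta^\theta_\br\Xi_{\bs,\bt}$ expands into at most $3^{\abs{\theta}}$ terms of the form $\Xi_{\bx,\by}$, not $2^{\abs{\theta}}$, but the argument only needs finiteness.)
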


\noindent Note that this is almost precisely Lemma \ref{lem:stochastic sewing simplified} with $\alpha = \beta$, except that \eqref{eq:UniquePropertySimplified1} got replaced by \eqref{ineq:uniqueness23}. So for the proof, it suffices to show that these two inequalities are equivalent, as long as $(i)-(iii)$ hold. 

\begin{proof}
    It is immediate to see that \eqref{ineq:uniqueness23} implies \eqref{eq:UniquePropertySimplified1} in the case $\alpha = \beta$. The converse follows by induction over $\abs\theta$: Assume that $(i)-(iii)$ as well as \eqref{eq:UniquePropertySimplified1} holds. Then $\norm{\cI^\emptyset\Xi_{\bs,\bt}}_m = \norm{\Xi_{\bs,\bt}}_m \le \norm{\Xi}_{\alpha,m} m_{\alpha[d]}(\bs,\bt)$. For non-trivial $\theta$, we decompose

    \begin{align*}
        \norm{\cI^\theta \Xi_{\bs,\bt}}_m &\le \norm{\sum_{\theta'\subset\theta}(-1)^{\abs{\theta'}} \cI^{\theta'}\Xi_{\bs,\bt}}_m + \sum_{\theta'\subsetneq\theta}\norm{(-1)^{\abs{\theta'}} \cI^{\theta'}\Xi_{\bs,\bt}}_m\\
        &\lesssim C\norm{\Xi}_{\alpha,m} m_{\alpha[d]}(\bs,\bt)\,,
    \end{align*}

    \noindent where we use \eqref{eq:UniquePropertySimplified1} and the induction hypothesis.
\end{proof}

\noindent We can now tackle the proof of Lemma \ref{lem:stochastic sewing simplified} and \ref{lem:every partition converges}:

\begin{proof}[Proof of lemma \ref{lem:stochastic sewing simplified}]

    \textit{Step 1: Existence}

    \noindent
    Let us start by showing the convergence of the sequence $\cP_\bn\Xi_{\bs,\bt}$ for dyadic grid-like partitions $\cP_\bn$ of $[\bs,\bt]$. We define the operator $I^i \cP^\theta_\bn = \cP^\theta_{\bn+\{i\}}$. The main argument is the observation, that
    \begin{equation}\label{eq:deltas}
        \prod_{i\in\theta}(Id-I^i)P^\theta_{\bn}\Xi = \sum_{[\bu,\bv]\in\cP^\theta_\bn} \left(\Id-\psi_{\frac{\bu+\bv}{2}}^\theta\right) \Xi_{\bu,\bv} = \sum_{[\bu,\bv]\in\cP^\theta_\bn} \delta^\theta_{\frac{\bu+\bv}2} \Xi_{\bu,\bv}
    \end{equation}
    holds. Note that $\delta^\theta_{\frac{\bu+\bv}2} \Xi_{\bu,\bv}$ is $\cF_\bv$-measurable and has conditional expectation $\EE^\eta_\bu \delta^\theta_{\frac{\bu+\bv}2} \Xi_{\bu,\bv} = 0$ for all $\emptyset\neq \eta\subset\theta$ by assumption, so we can use Lemma \ref{lem:muti-parameterBDG-simplified} to calculate
    \begin{align*}
        \norm{\prod_{i\in\theta}(Id-I^i)P^\theta_{\bn}\Xi}_m &= \norm{\sum_{[\bu,\bv]\in\cP^\theta_\bn} \delta^\theta_{\frac{\bu+\bv}2} \Xi_{\bu,\bv}}_m\\
        &\lesssim \left(\sum_{[\bu,\bv]\in\cP^\theta_\bn} \norm{\delta^\theta_{\frac{\bu+\bv}2} \Xi_{\bu,\bv}}_m^2\right)^{\frac12} \\
        &\lesssim \norm{\Xi}_{\alpha,\beta,m}\left(\sum_{[\bu,\bv]\in\cP^\theta_\bn} m_{\alpha\theta^c + \beta\theta}(\bu,\bv)^2\right)^{\frac 12}
    \end{align*}
    We use that $\abs{u_i-v_i} = 2^{-n_i}\abs{t_i-s_i}$ for $i\in\theta$ and $\abs{u_i-v_i} = \abs{t_i-s_i}$ for $i\notin\theta$, as well as the fact that the sum consist of $m_{\bn_\theta}(0,2)$ many terms, to conclude
    \begin{equation}\label{ineq:technicalCalc}
        \norm{\prod_{i\in\theta}(Id-I^i)P^\theta_{\bn}\Xi_{\bs,\bt}}_m\lesssim \norm{\Xi}_{\alpha,\beta,m}m_{-\bn_\theta(\beta-\frac 12)}(0,2) m_{\alpha\theta^c+\beta\theta}(\bs,\bt)\,.
    \end{equation}
    Lemma \ref{lem:technical2} now immediately implies that for $\bn\le\bk$
    \begin{equation*}
        \norm{(\cP^\theta_\bn-\cP^\theta_\bk)\Xi_{\bs,\bt}}\lesssim\norm{\Xi}_{\alpha,\beta,m}m_{\alpha[d]}(\bs,\bt)\sum_{i\in\eta}2^{-n_i(\beta-\frac 12)}
    \end{equation*}
    and that $\cP^\theta_\bn\Xi_{\bs,\bt}$ is a Cauchy sequence for all $\emptyset\neq\theta\subset[d]$. Since $L_m(\Omega)$ is a Banach space, it is thus convergent.

\bigskip
    
    \noindent
    \textit{Step 2: Properties (i)-(iv)}
    It is immediate that $\cI^\theta\Xi_{\bs,\bt}$ is $\cF_\bt$-measurable, and that $\cI^\emptyset\Xi =\Xi$. For the property (iv), we use that 
    \begin{equation*}
        \prod_{i\in\theta}(Id-I^i_{n_i}) P_{\boo}^\theta = \sum_{\theta'\subset\theta}(-1)^{\abs{\theta'}}P_\bn^{\theta'}\,.
    \end{equation*}
    If we look at the proof of Lemma \ref{lem:technical2} with $h_{\bn_\theta} = P^\theta_\bn\Xi_{\bs,\bt}$, we see that calculation \eqref{ineq:prodKI} together with \eqref{ineq:technicalCalc} implies
    \begin{align*}
        \norm{\sum_{\theta'\subset\theta}(-1)^{\abs{\theta'}}P_\bn^{\theta'}\Xi_{\bs,\bt}} &\lesssim \norm{\Xi}_{\alpha,\beta,m}\left(\sum_{\bk_\theta = \boo}^{\bn_\theta-\theta}m_{-\bk_\theta(\beta-\frac 12)}\left(0,2\right)\right) m_{\alpha\theta^c+\beta\theta}(\bs,\bt)\\
        &\lesssim \norm{\Xi}_{\alpha,\beta,m}m_{\alpha\theta^c+\beta\theta}(\bs,\bt)
    \end{align*}
    Letting $\bn$ go to $\infty$ gives us \eqref{eq:UniquePropertySimplified1}. \eqref{eq:expDelta=0} together with the identity \eqref{eq:deltas} implies that $\EE_\bs^\eta\prod_{i\in\theta}(\Id-I^i)\cP^\theta_{\bn}\Xi_{\bs,\bt} = 0$ for all $\emptyset\neq\eta\subset\theta\subset[d]$, which immediately gives \eqref{eq:UniquePropertySimplified2}.

    It remains to show that for all $\emptyset\neq\eta\subset\theta\subset[d]$, $\delta_\br^\eta\cI^\theta\Xi_{\bs,\bt} = 0$. Since $\delta^\eta_\br = \prod_{i\in\eta}\delta^i_{r_i}$, it suffice to show that $\delta_{r_i}^i\cI^\theta\Xi_{\bs,\bt} = 0$ for all $i\in\theta$. So let $r_i\in[s_i,t_i]$ and consider the grid-like partition
    \begin{equation*}
        \tilde \cP_\bn^\theta = \prod_{j\neq i}\cP^j_{n_j}\times (\cP^i_{n_i} \cup\{r_i\})\,.
    \end{equation*}
    We calculate that
    \begin{equation*}
        \norm{\left(P_{n_i}^i -\tilde P_{n_i}^i\right)\Xi_{\bs,\bt}}_m = \norm{\delta^i_{r_i}\Xi_{\pi^i_{r_i^-}\bs,\pi^i_{r_i^+}\bt}}_m\lesssim \norm{\Xi}_{\alpha,\beta,m}2^{-n_i\beta}\abs{t_i-s_i}^\beta m_{\alpha \{i\}^c}(\bs,\bt)\,,
    \end{equation*}
    where $r_i^-,r_i^+$ are the neighbors of $r_i$ in $\cP_{n_i}^i$. For all index-sets $\theta\subset[d]$ such that $i\in\theta$, this observation extends to
    \begin{equation*}
        \prod_{j\in\theta\setminus\{i\}} (\Id-I^j)(\cP^\theta_\bn-\tilde\cP^\theta_\bn)\Xi_{\bs,\bt} = \sum_{[\bu,\bv]\in\cP^{\theta\setminus\{i\}}_\bn}\delta^\theta_{\pi^i_{r_i}(\frac{\bu+\bv}2)} \Xi_{\pi^i_{r_i^-}\bu,\pi^i_{r_i^+}\bv}
    \end{equation*}
    which gives us with the use of Lemma \ref{lem:muti-parameterBDG-simplified}
    \begin{align*}
        \norm{\prod_{j\in\theta\setminus\{i\}} (\Id-I^j)(\cP^\theta_\bn-\tilde\cP^\theta_\bn)\Xi_{\bs,\bt}}_m &\lesssim \left(\sum_{[\bu,\bv]\in\cP^{\theta\setminus\{i\}}_\bn} \left(\norm{\Xi}_{\alpha,\beta,m}m_{-\beta\bn_\theta}(0,2)m_{\beta\theta+\alpha\theta^c}(\bs,\bt)\right)^2\right)^{\frac 12}\\
        &\le \norm{\Xi}_{\alpha,\beta,m}2^{-n_i\beta}m_{-(\beta-\frac 12)\bn_{\theta\setminus\{i\}}}(0,2)m_{\beta\theta+\alpha\theta^c}(\bs,\bt)\,.
    \end{align*}
    We apply Lemma \ref{lem:technical2} to the sequence $\bk\mapsto (\cP^\theta_{\tilde\bk}-\tilde\cP^\theta_{\tilde\bk})\Xi_{\bs,\bt})$, where $\bk\in\NN^{d-1}$ and $\tilde\bk = (k_1,\dots, k_{i-1}, n_i, k_i,\dots,k_{d-1})$. This gives us
    \begin{align*}
        \norm{(\cP^\theta_\bn-\tilde\cP^\theta_\bn) \Xi_{\bs,\bt}}_m &\le  \norm{(\cP^\theta_{\pi^i_\bn 0}-\tilde\cP^\theta_{\pi^i_\bn 0}) \Xi_{\bs,\bt}}_m + \norm{\left[(\cP^\theta_\bn-\tilde\cP^\theta_\bn) - (\cP^\theta_{\pi^i_\bn 0}-\tilde\cP^\theta_{\pi^i_\bn 0})\right] \Xi_{\bs,\bt}}_m\\
        &\lesssim \norm{\Xi}_{\alpha,\beta,m}2^{-n_i\beta}\abs{t_i-s_i}^\beta m_{\alpha \{i\}^c}(\bs,\bt)\xrightarrow{\bn\to\infty} 0\,.
    \end{align*}
    Thus, $\cI^\theta\Xi_{\bs,\bt} = \lim_{\bn\to\infty}\tilde\cP^\theta_\bn\Xi_{\bs,\bt}$. Using the notation $\delta_{x_i}^i\tilde\cP^\theta_\bn\Xi_{\bs,\bt} := \tilde\cP_\bn^\theta \delta_{x_i}^i\Xi_{\bs,\bt}$ for all $x_i\in[s_i,t_i]$, we see that $\delta^i_{x_i} \tilde\cP^\theta_\bn\Xi_{\bs,\bt} = 0$ for all $\bx\in\tilde\cP_\bn^\theta$. It especially follows that $\delta_{r_i}^i\tilde\cP^\theta_\bn\Xi_{\bs,\bt}=0$ and thus its limit fulfills $\delta^i_{r_i}\cI^\theta\Xi_{\bs,\bt} = 0$\,.

\bigskip

    \noindent
    \textit{Step 3: Uniqueness}
    Let $\theta\subset [d]$ and let $\cI^\theta\Xi,\tilde\cI^\theta\Xi$ be two $2d$-parameter processes fulfilling properties $(i)-(iv)$. Let $Z^\theta := \cI^\theta\Xi-\tilde\cI^\theta\Xi$. Then $Z^\theta$ fulfills: 
    \begin{itemize}
    \item $Z_{\bs,\bt}^\theta$ is $\cF_\bt$-measurable for all $\bs\le\bt$.
    \item $Z^\emptyset = 0$.
    \item $Z^\theta$ is a additive in $[\boo,\bT_\theta]$.
    \item $\exists C>0$ such that the following holds for all $\theta\subset[d]$ and $\bs\le\bt$:
    \begin{equation}
        \norm{\sum_{\theta'\subset\theta} (-1)^{\abs {\theta'}}Z^{\theta'}_{\bs,\bt}}_m \le C\norm{\Xi}_{\alpha,\beta,m}m_{\alpha\theta^c+\beta\theta}(\bs,\bt)\,,\label{eq:UniquePropertySimplifiedZ1}
    \end{equation}
    and for all $\emptyset\neq\eta\subset\theta$, we have
    \begin{equation}
        \EE^\eta_\bs\sum_{\theta'\subset\theta} (-1)^{\abs{\theta'}}Z^{\theta'}_{\bs,\bt} = 0\,.\label{eq:UniquePropertySimplifiedZ2}
    \end{equation}
    \end{itemize}
    We claim that these properties imply $Z^\theta = 0$, which we show by induction over $\abs{\theta}$. For $\theta = \emptyset$, this is true by the second property. For $\theta\neq\emptyset$, we use  the induction hypothesis to observe that for $\theta'\subsetneq \theta $

    \begin{equation}
         Z^{\theta'}_{\bs,\bt}=0
    \end{equation}
    and thus by the third property and Lemma \ref{lem:muti-parameterBDG-simplified} we may write
    \begin{align*}
        \norm{Z^\theta_{\bs,\bt}}_m &=\norm{\sum_{[\bu,\bv]\in\cP^\theta_\bn}Z^\theta_{\bu,\bv}}_m =  \norm{\sum_{[\bu,\bv]\in\cP^\theta_\bn}\sum_{\theta'\subset\theta}(-1)^{\abs{\theta}+\abs{\theta'}}Z^{\theta'}_{\bu,\bv}\,}_m\\
        &\lesssim \left(\sum_{[\bu,\bv]\in\cP^\theta_\bn} \norm{\sum_{\theta'\subset\theta} (-1)^{\abs{\theta'}} Z^{\theta'}_{\bu,\bv}}_m^2\right)^{\frac12}\\
        &\le C\norm{\Xi}_{\alpha,\beta,m}\left(\sum_{[\bu,\bv]\in\cP^\theta_\bn} m_{2\alpha\theta^c+2\beta\theta}(\bu,\bv)\right)^{\frac 12}\\
        & = C\norm{\Xi}_{\alpha,\beta,m}m_{\bn_\theta(\beta-\frac 12)}\left(0,\frac 12\right)m_{\alpha\theta^c+\beta\theta}(\bs,\bt)\xrightarrow{\bn\to\infty}0\,.
    \end{align*}
    Thus, $Z^\theta_\bn = 0$, which finishes the proof.
\end{proof}

\noindent
Let us now move to Lemma \ref{lem:every partition converges}, for which we, in particular, recall the notation in \eqref{tilde notation}.  The main idea of the proof is to show that for every partition $\cQ$ with small enough mesh size, there is a dyadic partition $\cP_\bn$ for some very high $\bn$, such that $\cQ\Xi_{\bs,\bt}$ and $\cP_\bn \Xi_{\bs,\bt}$ are very close in $L_m$-norm. The result then follows from the convergence of $\cP_\bn\Xi_{\bs,\bt}$.

During the proof, we will encounter terms of the form $\cP_\bn\Xi_{\bs,\bt}$, where $\cP_\bn$ is not a dyadic partition of $[\bs,\bt]$, but of some other interval $[\bS,\bT]$. The next lemma gives two useful properties of this expression:

\begin{lem}\label{lem:technical1}
    Let $ \Xi$ be as in Lemma \ref{lem:stochastic sewing simplified}. Let $\bS\le\bs\le\bt\le\bT$ and let $\cP_{\bn}$ the $\bn$-th dyadic partition of $[\bS,\bT]$. Let $\bn$ be such that $2^{-n_i}\abs{T_i-S_i}\ge\abs{t_i-s_i}\ge 2^{-n_i-1}\abs{T_i-S_i}$. We denote by
    \begin{equation*}
    \cP_{\bn}\Xi_{\bs, \bt} = \sum_{[\bu,\bv]\in \cP_n}\Xi_{\tilde \bu,\tilde \bv}\,.
    \end{equation*}
    It holds that for all $\theta\subset[d]$ and $\bn\le\bk$ and $j\in\theta$:
    \begin{align}
        \norm{\prod_{i\in\theta}(\Id-\cP^i_{k_i})\Xi_{\bs,\bt}}_m&\lesssim \norm{\Xi}_{\alpha,\beta,m}m_{\alpha\theta^c+\beta\theta}(\bs,\bt)\label{ineq:technical3}\\
        \EE^j_\bs \prod_{i\in\theta}(\Id-\cP^i_{k_i})\Xi_{\bs,\bt} &= 0\,.\label{eq:technical3}
    \end{align}
\end{lem}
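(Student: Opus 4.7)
The plan is to adapt Step 1 of Lemma \ref{lem:stochastic sewing simplified} to the present clipped-partition setting via a multilevel telescoping. For each $i\in\theta$ I would write $\Id-\cP^i_{k_i} = \sum_{j=0}^{k_i-1}(\cP^i_j-\cP^i_{j+1})$, noting that $\cP^i_0(s_i,t_i) = \{[s_i,t_i]\}$; each increment $(\cP^i_j-\cP^i_{j+1})\Xi_{\bs,\bt}$ is then a sum of $\delta^i_m\Xi_{\bx,\by}$ over intervals of $\cP^i_j(s_i,t_i)$ subdivided at a new midpoint $m\in\cP^i_{j+1}(s_i,t_i)$. Expanding the product over $i\in\theta$ yields
\[\prod_{i\in\theta}(\Id-\cP^i_{k_i})\Xi_{\bs,\bt} = \sum_{\mathbf{j}_\theta\in\prod_i[0,k_i-1]}\prod_{i\in\theta}(\cP^i_{j_i}-\cP^i_{j_i+1})\Xi_{\bs,\bt},\]
a double sum of $\delta^\theta_\bm\Xi_{\bx,\by}$ over multilevels $\mathbf{j}_\theta$ and compatible multi-midpoints $\bm$, where each sub-rectangle $[\bx,\by]\subset[\bs,\bt]$ has side $\lesssim\min(2^{-j_i}\abs{T_i-S_i},\abs{t_i-s_i})$ in direction $i\in\theta$ and $\abs{t_i-s_i}$ in direction $i\in\theta^c$.

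At each fixed multilevel $\mathbf{j}_\theta$, applying Lemma \ref{lem:muti-parameterBDG-simplified} (justified by the vanishing-conditional-expectation assumption \eqref{eq:expDelta=0}) together with the $C_2^{\alpha,\beta}L_m$-bound on $\delta^\theta\Xi$ should yield
\[\norm{\prod_{i\in\theta}(\cP^i_{j_i}-\cP^i_{j_i+1})\Xi_{\bs,\bt}}_m \lesssim \norm{\Xi}_{\alpha,\beta,m}\,m_{\alpha\theta^c+\beta\theta}(\bs,\bt)\prod_{i\in\theta}c_i(j_i),\]
where $c_i(j_i) = 2^{-(\beta-1/2)(j_i-n_i)}$ for $j_i\ge n_i$ (using the hypothesis $\abs{T_i-S_i}\le 2^{n_i+1}\abs{t_i-s_i}$ to compare dyadic sizes with $\abs{t_i-s_i}$), and $c_i(j_i)\le 1$ for $j_i<n_i$. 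Summing over $\mathbf{j}_\theta$ then gives \eqref{ineq:technical3}: the high-level sum $j_i\ge n_i$ converges geometrically since $\beta>1/2$, while for the low levels $j_i<n_i$ I would invoke the combinatorial observation that two distinct dyadic midpoints of $[S_i,T_i]$ at levels $\le n_i$ are separated by at least $2^{-n_i}\abs{T_i-S_i}\ge\abs{t_i-s_i}$, so at most $O(1)$ such midpoints fit into $(s_i,t_i)$, hence only $O(1)$ low levels contribute a nonzero term per direction.

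For \eqref{eq:technical3}, each summand $\delta^\theta_\bm\Xi_{\bx,\by}$ in the above decomposition satisfies $\bs\le\bx$, so $\cF^j_\bs\subset\cF^j_\bx$ for any $j\in\theta$; combining the tower property with assumption \eqref{eq:expDelta=0} applied with $\eta=\{j\}\subset\theta$ gives $\EE^j_\bs\delta^\theta_\bm\Xi_{\bx,\by}=\EE^j_\bs\EE^j_\bx\delta^\theta_\bm\Xi_{\bx,\by}=0$ for each summand, and the finite sum vanishes. The main technical obstacle I anticipate is the analysis of low-level contributions: a naive triangle-inequality bound would produce a spurious factor of $n_i$ per direction (since in principle up to $n_i$ coarse levels could each introduce a midpoint), and the spacing argument above is essential to absorb these into a uniform constant; a related delicate point is the careful bookkeeping of the clipping in $\cP^i_{k_i}(s_i,t_i)$ when verifying the grid-like structure and martingale-difference hypotheses required by Lemma \ref{lem:muti-parameterBDG-simplified}.
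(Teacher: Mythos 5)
Your proposal is correct and follows essentially the same route as the paper: a telescoping over dyadic levels in each direction $i\in\theta$, the key observation that at most one dyadic point of $\cP^i_{n_i}$ (hence of any coarser level) lies in $(s_i,t_i)$ so the low levels contribute only $O(1)$ nonzero terms, the comparison $2^{-j_i}\abs{T_i-S_i}\lesssim 2^{-(j_i-n_i)}\abs{t_i-s_i}$ for the high levels, an application of Lemma \ref{lem:muti-parameterBDG-simplified} at each fixed multilevel, and the tower property for \eqref{eq:technical3}. The only difference is organizational: the paper absorbs all levels below $\bn$ into a single base term $(\Id-\cP^i_{n_i})$ and telescopes only from $n_i$ upward via $(\Id-\cP^i_{k_i})=(\Id-\cP^i_{n_i})+\sum_{m=n_i}^{k_i-1}(\Id-I^i)\cP^i_m$, which packages your ``only $O(1)$ low levels contribute'' argument into one term, but the substance is identical.
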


\begin{proof}
    Observe that for any $\bk\ge\bn$, it holds that
    \begin{align*}
        \prod_{i\in\theta}(\Id-\cP^i_{k_i})\Xi_{\bs,\bt} &= \prod_{i\in\theta}\left[(\Id-\cP^i_{n_i})+\sum_{m= n_i}^{k_i-1}(\Id- I^i)\cP^i_m\right]\Xi_{\bs,\bt}\\
        &=\sum_{\eta\subset\theta}\sum_{\bm_\eta = \bn_\eta}^{\bk_\eta-\eta}\prod_{i\in\theta\setminus\eta}(\Id-\cP^i_{n_i})\prod_{i\in\eta}(\Id-I^i)\cP_{\bm_\eta}\Xi_{\bs,\bt}\,,
    \end{align*}
    where $I^iP_\bm^\theta = P_{\bm+\{i\}}^\theta$, as before. Analyzing the summands for each $\bm_\eta$ gives us
    \begin{equation*}
        \prod_{i\in\theta\setminus\eta}(\Id-P^i_{n_i})\prod_{i\in\eta}(\Id-I^i)\cP_{\bm_\eta}\Xi_{\bs,\bt} = \sum_{[\bu,\bv]\in\cP_{\bm_\eta}}\delta^\theta_{\tilde\br}\Xi_{\tilde\bu,\tilde\bv}\,,
    \end{equation*}
    where $r_i = \frac{u_i+v_i}2$ for $i\in\eta$ and for $i\in\theta\setminus\eta$, $r_i = t_i^-$ is the highest number $r_i<t_i$ in $\cP^i_{n_i}$ (note that by our choice of $n$, there is at most one point $r_i\in\cP^i_{n_i}$ such that $r_i\in [s_i,t_i]$). \eqref{eq:technical3} follows immediately. To see \eqref{ineq:technical3}, note that $\abs{\tilde u_i-\tilde v_i}\lesssim 2^{-(m_i-n_i)}\abs{t_i-s_i}$ holds for $i\in\eta$ and $\abs{\tilde u_i-\tilde v_i}\lesssim \abs{t_i-s_i}$ for $i\in\theta\setminus\eta$. It follows by Lemma \ref{lem:muti-parameterBDG-simplified}:
    \begin{align*}
        \norm{\prod_{i\in\theta\setminus\eta}(\Id-P^i_{n_i})\prod_{i\in\eta}(\Id-I^i)\cP_{\bm_\eta}\Xi_{\bs,\bt}}_m &\lesssim \left(\sum_{[\bu,\bv]\in\cP_{\bm_\eta}}\norm{\delta^\theta_{\tilde\br}\Xi_{\tilde\bu,\tilde\bv}}_m^2\right)^{\frac 12}\\
        &\lesssim \norm{\Xi}_{\alpha,\beta,m}m_{\alpha\theta^c+\beta\theta}(\bs,\bt)m_{-(\bm-\bn)_\eta(\beta-\frac 12)}(0,2)\,,
    \end{align*}
    where we used that the number of non-zero terms is of order $\prod_{i\in\eta}2^{m_i-n_i}$. Summing over $\bm_\eta\ge \bn_\eta$ gives \eqref{ineq:technical3}.
\end{proof}

\noindent We now have all ingredients for the proof of Lemma \ref{lem:every partition converges}: 

\begin{proof}[Proof of Lemma \ref{lem:every partition converges}]
    Let $\cP_\bk$ be the $\bk$-th dyadic partition of $[\bs,\bt]$ and assume that $\bk$ is large enough, such that at most one $\bu\in\cQ_\bn$ is in each $[\bu^-,\bu^+]\in\cP_\bk$. (Recall that $\bu^-,\bu^+$ are the neighbors of $\bu$ in $\cP_\bk$. This means that we allow $\bk$ to depend on $\bn$.) Let $\cA$ be the joint refinement of $\cP_\bk,\cQ_\bn$. By construction of $\cP_\bk$, we get that
    \begin{align*}
        \norm{\prod_{i\in\theta}(\cP^i_{k_i}-\cA^i)\Xi_{\bs,\bt}}_m &= \norm{\sum_{\bu\in\prod_{i\in\theta}\cQ^i_{n_i}\times\{\bs_\theta^c\}}\delta^\theta_\bu \Xi_{\bu_\theta^-,\bu_\theta^+}}_m\\
        &\lesssim \#\cQ_\bn m_{\alpha\theta^c+\beta\theta}(\bs,\bt)m_{-\bk_\theta\beta}(0,2)\,,
    \end{align*}
    where $\#\cQ_\bn$ is the number of points in $\cQ_n$, and $\bu^-_\theta$, $\bu^+_\theta$ denotes the neighbors of $\bu$ in $\cP^\theta_\bk$ and where we disregard $\norm{\Xi}_{\alpha,\beta,m}$.  It follows as before from calculation \eqref{calc3technicalLema} that
    \begin{equation*}
        \norm{(\cA^\theta-\cP^\theta_\bk)\Xi_{\bs,\bt}}_m \lesssim \sum_{i\in\theta}\#\cQ_\bn m_{\alpha[d]}(\bs,\bt) 2^{-k_i\beta}\,,
    \end{equation*}
    which shows that $\norm{(\cP^\theta_\bk-\cA^\theta)\Xi_{\bs,\bt}}<\epsilon$ for large enough $\bk$ for all $\theta\subset [d]$.
    It remains to show that $\norm{(\cA^\theta-\cQ_\bn^\theta)\Xi_{\bs,\bt}}_m$ is small. To do so, let $[\bu,\bv]\in \cQ^\theta_\bn$. Then by Lemma \ref{lem:technical1}, we get that
    \begin{align*}
        \norm{\prod_{i\in\theta}(\Id-\cA^i)\Xi_{\bu,\bv}}_m = \norm{\prod_{i\in\theta}(\Id-\cP_{k_i}^i)\Xi_{\bu,\bv}}_m &\lesssim m_{\alpha\theta^c+\beta\theta}    (\bu,\bv)\\
        \EE^i_\bu \prod_{i\in\theta}(\Id-\cA^i)\Xi_{\bu,\bv} = \EE^i_\bu \prod_{i\in\theta}(\Id-\cP_{k_i}^i)\Xi_{\bu,\bv} &= 0\,,
    \end{align*}
    for all $i\in\theta$. Using Lemma \ref{lem:muti-parameterBDG-simplified}, we now get that
    \begin{align*}
        \norm{\prod_{i\in\theta}(\cA^i-\cQ^i_{n_i})\Xi_{\bs,\bt}}_m &\lesssim \left(\sum_{[\bu,\bv]\in\cQ_\bn^\theta}\norm{\prod_{i\in\theta}(\Id-\cA^i)\Xi_{\bu,\bv}}_m^2\right)^{\frac 12}\\
        &\lesssim m_{\alpha\theta^c+\frac 12\theta}(\bs,\bt)\prod_{i\in\theta}\abs{\cQ^i_{n_i}}^{\beta-\frac 12}\,,
    \end{align*}
    where we used that $\left(\sum_{[u_i,v_i]\in\cQ_{n_i}^i} \abs{u_i-v_i}^{2\beta}\right)^{\frac 12} \lesssim \abs{Q_{n_i}^i}^{\beta-\frac 12}\abs{t_i-s_i}^{\frac 12}$. It follows that
    \begin{equation*}
        \norm{(\cA-\cQ_\bn)\Xi_{\bs,\bt}}\lesssim \abs{\cQ_\bn}^{\beta-\frac 12}m_{\min(\alpha,\frac 12)[d]}(\bs,\bt)\,,
    \end{equation*}
    which goes to zero for $\bn\to\infty$. Thus, we can choose an $\bk(\bn)$ which goes to $\infty$ as $\bn\to\infty$, such that $\norm{(\cP_\bk-\cQ_\bn)\Xi_{\bs,\bt}}$ vanishes. This shows the claim.
\end{proof}

\subsection{General setting}\label{sec:general_sewing_lemma}

As stated before, we also aim to show a general stochastic multiparameter sewing lemma. That is, we will not assume in this section, that the conditional expectations $\EE^i_\bs \delta^\theta_\bu\Xi_{\bs,\bt}$ vanish, but that they have an appropriate bound
\begin{equation*}
    \norm{\EE^\eta_\bs \delta^\theta_\bu\Xi_{\bs,\bt}}_m\lesssim\prod_{i\in\theta\setminus\eta}\abs{t_i-s_i}^{\frac 12+\epsilon_1}\prod_{i\in\eta}\abs{t_i-s_i}^{1+\epsilon_2}\,,
\end{equation*}
for all $\eta\subset\theta\subset[d]$, $\theta\neq\emptyset$ as well as $\bs\le\bu\le\bt$. The main difference in the proof of the sewing lemmas will be that we can no longer use the BDG-type inequality given in Lemma \ref{lem:muti-parameterBDG-simplified} and need to prove a new one. Recall that in the case of dimension $1$, we look at sequences $(Z_k)_{k\in\NN}$ such that $Z_k$ is $\cF_{k+1}$-measurable and the $L_m$-norm of its conditional expectation is better behaved than its $L_m$-norm, i.e. $\norm{\EE_kZ_k}_m\lesssim \norm{Z_k}_m$, the one-dimensional BDG-inequality gives us
\begin{equation*}
    \norm{\sum_{k=1}^N Z_k}_m\lesssim \sum_{k=1}^N\norm{\EE_kZ_k}_m + \left(\sum_{k=1}^N\norm{Z_k}_m^2\right)^{\frac 12}\,.
\end{equation*}
If we are in dimension $d=2$, that is we have a 2-parameter sequence $(Z_\bk)_{\bk\in\NN^2}$ of random variables in $L_m(\Omega)$, such that each $Z_\bk$ is $\cF_{\bk+\boldsymbol 1}$-measurable for a commuting filtration $(\cF_\bk)_{\bk\in\NN^2}$, the commuting property allows us to apply the one-dimensional BDG-inequality to $\tilde Z_{k_1} = \sum_{k_2=1}^N Z_{(k_1,k_2)}$. Afterward, one can apply the BDG-inequality a second time to get
\begin{align}
\begin{split}\label{term:sum_Z_k_not_possible}
    \norm{\sum_{\bk \in[N]^2} Z_\bk}_m &\lesssim \sum_{k_1=1}^N\norm{\EE^1_{k_1} \tilde Z_{k_1}}_m + \left(\sum_{k_1=1}^N \norm{\tilde Z_{k_1}}_m^2\right)^{\frac 12}\\
    &\lesssim \sum_{\bk\in[N]^2}\norm{\EE^{(1,2)}_\bk Z_\bk}_m + \sum_{k_1 = 1}^N \left(\sum_{k_2=1}^N \norm{\EE^1_{k_1}Z_\bk}_m^2\right)^{\frac 12} \\
    &\qquad +\left(\sum_{k_1=1}^N \left(\sum_{k_2=1}^N \norm{\EE_{k_2}^2 Z_\bk}_m + \left(\sum_{k_2=1}^N\norm{Z_\bk}_m^2\right)^{\frac 12}\right)^2\right)^{\frac 12}\\
    &\le \sum_{\bk\in[N]^2}\norm{\EE^{(1,2)}_\bk Z_\bk}_m + \sum_{k_1 = 1}^N \left(\sum_{k_2=1}^N \norm{\EE^1_{k_1}Z_\bk}_m^2\right)^{\frac 12} \\
    &\qquad +\left(\sum_{k_1=1}^N \left(\sum_{k_2=1}^N \norm{\EE_{k_2}^2 Z_\bk}_m \right)^2\right)^{\frac 12} + \left(\sum_{\bk\in[N]^2}\norm{Z_\bk}_m^2\right)^{\frac 12}
\end{split}
\end{align}
This forms a rather complicated algebraic structure since we need to keep track of where exactly the powers $2$ and $\frac 12$ appear in the nested sums. It still holds that for each index $i\in[d]$ in each term, we either sum over squares or have a conditional expectation $\EE^i_{k_i}$, so we still expect to see the improvement of regularity which makes stochastic sewing possible. However, the above formula is already somewhat impractical for dimension $d=2$ and will only get more complicated for higher dimensions $d$.

We can sight-step these algebraic difficulties thanks to the following observation: Eventually, we want to apply the $BDG$ inequality to sums over $\delta_\bu^\theta \Xi_{\bs,\bt}$ for a $2d$-parameter process $\Xi$, $\bs\le\bu\le\bt$ and an index-set $\emptyset\neq\theta\subset[d]$. The $L_m$ norm of the conditional expectation of these terms factorizes in the sense, that for each $\eta\subset\theta$ we have:
\begin{equation*}
    \norm{\EE^\eta_\bs\delta_\bu^\theta\Xi_{\bs,\bt}}_m \lesssim \prod_{i\in\theta\setminus\eta}\abs{t_i-s_i}^{\frac 12+\epsilon_1} \prod_{i\in\eta}\abs{t_i-s_i}^{1+\epsilon_2}\,.
\end{equation*}
 If we replace $\delta^\theta_\bu\Xi_{\bs,\bt}$ with $Z_\bk$ and call  $a_{i,k_i} := \abs{t_i-s_i}^{\frac 12+\epsilon_1}$ as well as $b_{i,k_i} := \abs{t_i-s_i}^{1+\epsilon_2}$, we get the following property of $Z_\bk$: There are constants $0< b_{i,k_i}, a_{i,k_i},c$ such that
\begin{equation*}
    \norm{\EE^\eta_\bk Z_\bk}_m\lesssim c\cdot b_{\eta,\bk}\cdot a_{\eta^c,\bk}\,,
\end{equation*}
where $b_{\eta_\bk} := \prod_{i\in\eta} b_{i,k_i}$ and $a_{\eta^c,\bk} := \prod_{i\in\eta^c} a_{i,k_i}$. Under this additional assumption, \eqref{term:sum_Z_k_not_possible} can be neatly written as a sum over all subsets $\theta\subset[d]$
\begin{equation*}
    \norm{\sum_{\bk\in[N]^2} Z_\bk}_m \lesssim \sum_{\theta\subset[2]}\left(\sum_{\bk_{\theta^c}\in[N]^{\theta^c}} b_{\theta^c,\bk}\right)\left(\sum_{k_\theta\in[N]^{\abs\theta}} a_{\theta,\bk}^2\right)^{\frac 12}\,,
\end{equation*}
where one can think of $\theta$ as the set of indices, for which the $k_i$ get summed over squares in \eqref{term:sum_Z_k_not_possible}, while for the indices $i\notin\theta$, the $k_i$ are summed over conditional expectations $\EE^i_{k_i}$. This leads us to the following BDG-type inequality, where we use the notation that $\dot\cup$ denotes disjoint unions:
\begin{lem}\label{multiparameter BDG}
    For each $i\in\{1,\dots,d\}$, let $I_i\subset\NN$ be a finite set with minimal element $y_i$, and set $I_\theta := \prod_{i\in\theta} I_i\times\prod_{j\notin\theta}\{y_j\}$ for each index set $\theta$, such that all $I_\theta\subset\NN^d$. Further, let $(\mathcal F_\bn)_{\bn\in\NN^d}$ be a commuting $d$-parameter filtration and let $Z_\bk\in L_m(\Omega)$ for each $\bk\in I_{[d]}$. We assume that $Z_\bk$ is $\cF_{\bk+ \boldsymbol 1}$ measurable. Assume that there exist constants $0<b_{i,k_i}, a_{i,k_i}$, $i\in[d], k_i\in I_i$, as well as a $c>0$, such that for each $\eta\subset[d]$:
\begin{equation}\label{ineq:lem3Condtition}
    \norm{\EE^\eta_\bk Z_{\bk}}_m\le c \cdot b_{\eta,\bk} \cdot a_{\eta^c,\bk}\,,
\end{equation}
where $b_{\eta,\bk} := \prod_{i\in\eta} b_{i,k_i}$ and $a_{\eta^c,\bk} = \prod_{i\in\eta^c}a_{i,k_i}$, as before. Then, it follows that for each $\theta,\eta\subset[d]$:
\begin{equation}\label{ineq:lem3Result}
    \norm{\EE^\eta_{\mathbf y}\sum_{\bk\in I_{\theta}} Z_{\bk}}_m\lesssim c\sum_{\theta\setminus\eta = \theta_1\dot\cup \theta_2} \left(\sum_{\bk\in I_{\theta\setminus\theta_2}} b_{\eta\cup\theta_1,\bk}\right) \left(\sum_{\bl\in I_{\theta_2}} a_{(\eta\cup\theta_1)^c,\bl}^2\right)^{\frac 12}\,,
\end{equation}
where we some over all disjoint $\theta_1,\theta_2$, such that $\theta\setminus\eta=\theta_1\cup\theta_2$. It especially follows, that for $\eta =\emptyset$:
\begin{equation*}
    \norm   {\sum_{\bk\in I_{\theta}} Z_{\bk}}_m\lesssim c\cdot a_{\theta^c,\by}\sum_{\theta = \theta_1\dot\cup \theta_2} \left(\sum_{\bk\in I_{\theta_1}} b_{\theta_1,\bk}\right) \left(\sum_{\bl\in I_{\theta_2}} a_{\theta_2,\bl}^2\right)^{\frac 12}\,.
\end{equation*}
\end{lem}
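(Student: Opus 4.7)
The plan is to prove \eqref{ineq:lem3Result} by induction on $|\theta\setminus\eta|$, iteratively peeling off one direction at a time via the one-parameter BDG inequality \eqref{ineq:BDG1d} applied along the marginal filtrations $(\cF^i_{k_i})_{k_i}$. At each peel the 1D BDG produces two terms---one carrying an additional conditional expectation $\EE^i_{k_i}$ and one carrying the $\ell^2$ square function---and the disjoint decompositions $\theta\setminus\eta=\theta_1\dot\cup\theta_2$ on the right of \eqref{ineq:lem3Result} simply record which choice was made in each direction: $\theta_1$ collects the directions handled by the conditional-expectation term (absorbing a factor $b_{i,k_i}$ that is then summed over $k_i$) and $\theta_2$ collects those handled by the square function (absorbing an $a_{i,k_i}$-factor in $\ell^2$).

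\textbf{Base case} $|\theta\setminus\eta|=0$, i.e.\ $\theta\subset\eta$: only $\theta_1=\theta_2=\emptyset$ contributes. By Minkowski, the tower identity $\EE^\eta_\by = \EE^\eta_\by\circ\EE^\eta_\bk$ (valid on commuting filtrations when $\by\le\bk$), conditional Jensen, and hypothesis \eqref{ineq:lem3Condtition}, one obtains
\[
\norm{\EE^\eta_\by \sum_{\bk\in I_\theta} Z_\bk}_m \le \sum_\bk \norm{\EE^\eta_\bk Z_\bk}_m \le c\sum_{\bk\in I_\theta} b_{\eta,\bk}\, a_{\eta^c,\bk},
\]
and since $\bk_{\eta^c}=\by_{\eta^c}$ on $I_\theta$ (because $\theta\subset\eta$), $a_{\eta^c,\bk}=a_{\eta^c,\by}$ may be factored out, matching the RHS.

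\textbf{Inductive step:} Pick $i\in\theta\setminus\eta$, set $\theta':=\theta\setminus\{i\}$, and decompose $\sum_{\bk\in I_\theta}Z_\bk=\sum_{k_i\in I_i} U_{k_i}$ with $U_{k_i}:=\sum_{\bk\in I_\theta,\ i\text{-th coord}=k_i}Z_\bk$. Each $U_{k_i}$ is $\cF^i_{k_i+1}$-measurable, and since $\EE^\eta_\by$ commutes with $\EE^i_{k_i+1}$ (by the commuting filtration property, $i\notin\eta$), $\EE^\eta_\by U_{k_i}$ remains $\cF^i_{k_i+1}$-measurable. The 1D BDG \eqref{ineq:BDG1d} along $(\cF^i_{k_i})_{k_i}$ then gives
\[
\norm{\EE^\eta_\by \sum_\bk Z_\bk}_m \lesssim \sum_{k_i}\norm{\EE^{\eta\cup\{i\}}_{\by^{(i)}} U_{k_i}}_m + \Big(\sum_{k_i}\norm{\EE^\eta_\by U_{k_i}}_m^2\Big)^{1/2},
\]
where $\EE^i_{k_i}\EE^\eta_\by=\EE^{\eta\cup\{i\}}_{\by^{(i)}}$ with $\by^{(i)}_j=y_j$ for $j\ne i$ and $\by^{(i)}_i=k_i$. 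Each inner norm fits the lemma's form for $U_{k_i}$ (whose index set is of the prescribed form with $I_i$ replaced by $\{k_i\}$ and $y_i$ by $k_i$), with conditioning set $\eta\cup\{i\}$ for the first term and $\eta$ for the second; in both cases $|\theta'\setminus\eta_{\mathrm{new}}|=|\theta\setminus\eta|-1$, so the induction is well-founded.

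\textbf{The main obstacle} is the combinatorial bookkeeping needed to verify that, after invoking the induction hypothesis, summing over $k_i$ (resp.\ taking the $\ell^2$-norm) precisely completes the products $\sum_{\bk\in I_{\theta\setminus\theta_2}} b_{\eta\cup\theta_1,\bk}$ and $(\sum_{\bl\in I_{\theta_2}} a^2_{(\eta\cup\theta_1)^c,\bl})^{1/2}$ on the RHS of \eqref{ineq:lem3Result}. The key observation is the following: in the first term one has $i\in\theta_1$, so $i\in\eta\cup\theta_1$ makes the inductive bound carry a factor $b_{i,k_i}$ inside $b_{\eta\cup\theta_1,\tilde\bk}$, while $i\notin(\eta\cup\theta_1)^c$ makes the $a$-factor independent of $k_i$; hence the outer sum $\sum_{k_i}$ supplies the missing $b_{i,\cdot}$-factor, turning the inductive $\sum_{\tilde\bk\in I_{\theta'\setminus\theta_2}}$ into $\sum_{\bk\in I_{\theta\setminus\theta_2}}$. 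In the second term one has $i\in\theta_2\subset(\eta\cup\theta_1)^c$, so the $a$-factor carries $a_{i,k_i}^2$ while the $b$-factor is $k_i$-independent; the outer $\ell^2$-sum thus promotes the inductive $\sum_{\tilde\bl\in I_{\theta_2\setminus\{i\}}}a^2$ to $\sum_{\bl\in I_{\theta_2}}a^2$. Collecting the two contributions over all inductive decompositions $\theta'\setminus\eta_{\mathrm{new}}=\theta_1'\dot\cup\theta_2'$, with $i$ inserted into $\theta_1$ (first term) or $\theta_2$ (second term), exhausts every decomposition $\theta\setminus\eta=\theta_1\dot\cup\theta_2$, yielding the claimed bound.
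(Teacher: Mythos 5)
Your proposal is correct and follows essentially the same route as the paper's proof: induction on $\abs{\theta\setminus\eta}$, peeling off one direction $i\in\theta\setminus\eta$ via the one-parameter BDG inequality along the marginal filtration, with the two resulting terms (conditional-expectation term and square-function term) corresponding exactly to inserting $i$ into $\theta_1$ or $\theta_2$, and the same factorization observation ($U_{k_i}=a_{i,k_i}\cdot R$ with $R$ independent of $k_i$) to collapse the outer $\ell^2$-sum. The only cosmetic difference is that you state the base case as $\theta\subset\eta$ rather than $\eta=\theta$, which is in fact the more accurate formulation of the case $\abs{\theta\setminus\eta}=0$.
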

\begin{rem}
    The formula \eqref{ineq:lem3Result} becomes rather complicated, since we allow $\theta,\eta\subset[d]$ to be arbitrary. In particular, we want to be able to analyze $\EE^i_{y_i}\sum_{\bk\in I_{\theta}} Z_{\bk}$ for $i\notin\theta$, so we do not want to postulate $\eta\subset\theta$. While this complicates the formula, \eqref{ineq:lem3Result} is precisely what one would expect: For all $i\in\eta$, we have a conditional expectation $\EE^i_{y_i}$, so the right-hand-side contains the factor $\sum_{k_i\in I_i} b_{i,k_i}$ if $i\in\theta$, and $b_{i,y_i}$ if $i\notin\theta$. If an index $i$ is neither in $\theta$ nor in $\eta$, the right-hand-side gets the factor $a_{i,y_i}$. Last but not least, for an index $i\in\theta\setminus\eta$, the one-dimensional BDG-inequality gives us the factor $\sum_{k_i\in I_i} b_{i,k_i} + \left(\sum_{k_i\in I_i} a^2_{i,k_i}\right)^\frac 12$. If one multiplies out all these terms, one gets precisely \eqref{ineq:lem3Result}.
\end{rem}
\begin{proof}
    We prove the claim by induction over $\abs{\theta\setminus\eta}$. For $\eta = \theta$, we see that
    \begin{equation*}
        \norm{\EE^\theta_{\mathbf y} \sum_{\bk\in I_\theta} Z_\bk}_m \le \sum_{\bk\in I_\theta}\norm{\EE^\theta_{\bk} Z_\bk}_m\le c\cdot a_{\theta^c,\by}\sum_{\bk\in I_\theta} b_{\theta,\bk}
    \end{equation*}

    \noindent holds. Assume the claim is already proven for index sets $\theta, \eta$ with $\abs{\theta\setminus\eta}\le N$. Then for a $\abs{\theta\setminus\eta} = N+1$, w.l.o.g let $1\in\theta\setminus\eta$. We apply the one-dimensional BDG inequality \eqref{ineq:BDG1d} to the sum $\norm{\sum_{k_1\in I_1}\left(\EE^\eta_\by\sum_{\bx\in I_{\theta\setminus\{1\}}} Z_{(k_1,\bx)}\right)}_m$, where we use the notation $(k_1,\bx) := (k_1,x_2,\dots,x_d)$:
    \begin{align*}
        \norm{\EE^\eta_{\mathbf y} \sum_{\bk\in I} Z_\bk}_m&\lesssim \sum_{k_1\in I_1}\norm{\EE_{(k_1,\mathbf y)}^{\eta\cup\{1\}}\sum_{\bk\in I_{\theta\setminus\{1\}}} Z_{(k_1,\bk)}}_m+ \left(\sum_{k_1\in I_1} \norm{\EE_{\mathbf y}^{\eta}\sum_{\bk\in I_{\theta\setminus\{1\}}} Z_{(k_1,\bk)}}_m^2\right)^{\frac 12}\\
        &\lesssim c\sum_{k_1\in I_1}\sum_{(\theta\setminus\{1\})\setminus\eta = \theta_1\dot\cup\theta_2} \left(\sum_{\bk\in I_{(\theta\setminus\{1\})\setminus\theta_2}} b_{\eta\cup\{1\}\cup\theta_1,(k_1,\bk)}\right)\left(\sum_{\bl\in I_{\theta_2}} a_{(\eta\cup\{1\}\cup\theta_1)^c,(k_1,\bl)}^2\right)^{\frac 12} \\
        &\qquad + c\left(\rule{0cm}{1.3cm}\right. \sum_{k_1\in I_1}\underbrace{\left[\sum_{(\theta\setminus\{1\})\setminus\eta = \theta_1\dot\cup\theta_2} \left(\sum_{\bk\in I_{(\theta\setminus\{1\})\setminus\theta_2}} b_{\eta\cup\theta_1,(k_1,\bk)}\right)\left(\sum_{\bl\in I_{\theta_2}} a_{(\eta\cup\theta_1)^c,(k_1,\bl)}^2\right)^{\frac 12}\right]^2}_{U_{k_1}^2}\left.\rule{0cm}{1.3cm}\right)^{\frac 12}\,,
    \end{align*}
    note that the second term $U_{k_1}$ is of the form $U_{k_1} = a_{1,k_1}\cdot R$ for an $R$ independent of $k_1$. Thus, we can write $\sum_{k_1} U_{k_1}^2 = R^2\cdot\sum_{k_1}a_{1,k_1}^2$ to get
    \begin{align*}
        \norm{\EE^\eta_{\mathbf y} \sum_{\bk\in I} Z_\bk}_m&\lesssim c\sum_{(\theta\setminus\{1\})\setminus\eta = \theta_1\dot\cup\theta_2} \left(\sum_{\bk\in I_{\theta\setminus\theta_2}} b_{\eta\cup(\theta_1\cup\{1\}),\bk}\right)\left(\sum_{\bl\in I_{\theta_2}} a_{(\eta\cup(\theta_1\cup\{1\}))^c,\bl}^2\right)^{\frac 12} \\
        &\qquad + c\sum_{(\theta\setminus\{1\})\setminus\eta = \theta_1\dot\cup\theta_2} \left(\sum_{\bk\in I_{\theta\setminus(\theta_2\cup\{1\})}} b_{\eta\cup\theta_1,\bk}\right)\left(\sum_{\bl\in I_{\theta_2\cup\{1\}}} a_{(\eta\cup\theta_1)^c,\bl}^2\right)^{\frac 12}\\
        &= c\sum_{\theta\setminus\eta=\theta_1\dot\cup\theta_2} \left(\sum_{\bk\in I_{\theta\setminus\theta_2}}b_{\eta\cup\theta_1,\bk}\right)\left(\sum_{\bl\in I_{\theta_2}} a_{(\eta\cup\theta_1)^c,\bl}^2\right)^{\frac 12}\,.
    \end{align*}    
    This shows the lemma.
\end{proof}
\noindent Recall that the space $C_2^{\alpha,\beta,\gamma}L_m$ is defined as the space of $2d$ parameter processes $(\Xi_{\bs,\bt})_{\bs,\bt\in\Delta_\bT}$, such that for all $(\bs,\bt)\in\Delta_\bT$ we have:

\begin{itemize}
    \item $\Xi_{\bs,\bt}\in L_m$.
    \item $\Xi_{\bs,\bt}$ is $\cF_\bt$-measurable for some commuting filtration $(\cF_\bt)_{\bt\in[\boo,\bT]}$.
    \item We have 
    \begin{align*}
        \norm{\Xi_{\bs,\bt}}_m &\lesssim \prod_{i\in[d]}\abs{t_i-s_i}^\alpha = m_{\alpha[d]}(\bs,\bt) \\
        \norm{\EE^\eta_\bs\delta_\bu^\theta\Xi_{\bs,\bt}}&\lesssim \prod_{i\in\theta^c}\abs{t_i-s_i}^\alpha\prod_{i\in\theta\setminus\eta}\abs{t_i-s_i}^{\beta}\prod_{i\in\eta}\abs{t_i-s_i}^{\beta+\gamma} = m_{\alpha\theta^c+\beta\theta+\gamma\eta}(\bs,\bt)\,,
    \end{align*}

    \noindent for all $\bs\le\bu\le\bt$ as well as $\eta\subset\theta\subset[d]$ with $\theta\neq\emptyset$.
\end{itemize}

\noindent While highly technical, it is now straight-forward to generalize Lemma \ref{lem:stochastic sewing simplified} to the following lemma:

\begin{lem}[Stochastic multiparameter sewing]\label{lem:StochSewing}
Let $\Xi\in C_2^{\alpha,\beta,\gamma} L_m$. Assume $m\ge 2$, $\alpha\in(0,1),\gamma\in(0,\frac 12), \beta\in(\frac 12,\infty)$ with $\beta+\gamma>1$. Then the following limits exist for any $\theta\subset[d]$ for each sequence of grid-like partitions $\cP_\bn$, such that the mesh-size $\abs{\cP_\bn}$ vanishes as $\bn\to\infty$:
\begin{equation*}
    \cI^{\theta}\Xi_{\bs,\bt} := L_m-\lim_{\bn\to\infty} \sum_{[\bu,\bv]\in \cP^\theta_\bn} \Xi_{\bu,\bv}\,.
\end{equation*}
Furthermore, these limits are the unique family of two-parameter processes $(\cI^\theta\Xi_{\bs,\bt})_{\theta\subset[d]}$ in $L_m(\Omega)$ fulfilling the following properties:
\begin{itemize}
    \item $\cI^\theta\Xi_{\bs,\bt}$ is $\cF_\bt$-measurable for all $\bs\le\bt$.
    \item $\cI^\emptyset\Xi = \Xi$.
    \item $\cI^{\theta}\Xi$ is additive in $[\boo,\bT_\theta]$, i.e. for all $\emptyset\neq\eta\subset\theta$ and $\bs\le\bu\le\bt$, $\delta^\eta_\bu \cI^\theta\Xi_{\bs,\bt} = 0$.
    \item There exists a constant $C>0$ such that the following inequalities hold for all $\eta\subset\theta\subset[d]$, $\bs\le\bt$:
    \begin{equation}\label{ineq:uniqueProperty}
        \norm{\EE^\eta_\bs\sum_{\theta'\subset\theta} (-1)^{\abs{\theta'}} \cI^{\theta'}\Xi_{\bs,\bt}}_m \le C\norm{\Xi}_{\alpha,\beta,\gamma,m}m_{\alpha\theta^c+\beta\theta+\gamma\eta}(\bs,\bt)\,.
    \end{equation}
\end{itemize}
\end{lem}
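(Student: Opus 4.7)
The overall strategy is to mirror the proof of Lemma \ref{lem:stochastic sewing simplified} (and its partition-independence counterpart Lemma \ref{lem:every partition converges}), but with the simplified BDG of Lemma \ref{lem:muti-parameterBDG-simplified} systematically replaced by the general version Lemma \ref{multiparameter BDG}. The guiding algebraic identity remains
\[
\prod_{i\in\theta}(\Id-I^i)\cP^\theta_\bn\Xi_{\bs,\bt}=\sum_{[\bu,\bv]\in\cP^\theta_\bn}\delta^\theta_{(\bu+\bv)/2}\Xi_{\bu,\bv},
\]
so the whole argument reduces to controlling the $L_m$-norms of this sum together with its conditional expectations $\EE^\eta_\bs$ for $\eta\subset\theta$, which is exactly the setting of Lemma \ref{multiparameter BDG}.

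For existence along dyadic partitions, I fix $\emptyset\neq\theta\subset[d]$ and apply Lemma \ref{multiparameter BDG} to $Z_\bk=\delta^\theta_{(\bu_\bk+\bv_\bk)/2}\Xi_{\bu_\bk,\bv_\bk}$ with the choices $a_{i,k_i}\asymp 2^{-n_i\beta}\abs{t_i-s_i}^{\beta}$ and $b_{i,k_i}\asymp 2^{-n_i(\beta+\gamma)}\abs{t_i-s_i}^{\beta+\gamma}$, which are exactly the rates provided by $\Xi\in C_2^{\alpha,\beta,\gamma}L_m$. Since $\beta>\tfrac12$ and $\beta+\gamma>1$, one gets $\sum_{k_i}b_{i,k_i}\lesssim 2^{-n_i(\beta+\gamma-1)}\abs{t_i-s_i}^{\beta+\gamma}$ and $\bigl(\sum_{k_i}a_{i,k_i}^2\bigr)^{1/2}\lesssim 2^{-n_i(\beta-1/2)}\abs{t_i-s_i}^{\beta}$; feeding these into \eqref{ineq:lem3Result} and summing over the $2^{|\theta\setminus\eta|}$ admissible decompositions yields, for every $\eta\subset\theta$, a bound of the form
\[
\Bigl\|\EE^\eta_\bs\,\textstyle\prod_{i\in\theta}(\Id-I^i)\cP^\theta_\bn\Xi_{\bs,\bt}\Bigr\|_m\lesssim \norm{\Xi}_{\alpha,\beta,\gamma,m}\,m_{\alpha\theta^c+\beta\theta+\gamma\eta}(\bs,\bt)\,\prod_{i\in\theta}2^{-n_i\kappa_i}
\]
with some $\kappa_i>0$. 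Lemma \ref{lem:technical2} then upgrades this to a Cauchy estimate for $(\cP^\theta_\bn\Xi_{\bs,\bt})_\bn$ in $L_m$, defining the candidates $\cI^\theta\Xi_{\bs,\bt}$.

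Properties (i) and (ii) are immediate; additivity $\delta^i_{r_i}\cI^\theta\Xi_{\bs,\bt}=0$ for $i\in\theta$ is obtained verbatim as in Step 2 of Lemma \ref{lem:stochastic sewing simplified} by inserting the extra point $r_i$ into $\cP^i_{n_i}$ and reapplying the general BDG. The bound \eqref{ineq:uniqueProperty} is produced by passing to the limit in the telescoping identity $\prod_{i\in\theta}(\Id-I^i_{n_i})\cP^\theta_\boo=\sum_{\theta'\subset\theta}(-1)^{|\theta'|}\cP^{\theta'}_\bn$ and summing the geometric estimates from the previous paragraph over $\bk_\theta$ up to $\bn_\theta$. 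Uniqueness is then obtained by induction on $|\theta|$: setting $Z^\theta$ equal to the difference of two candidate families, the induction hypothesis kills $Z^{\theta'}$ for $\theta'\subsetneq\theta$, additivity lets one write $Z^\theta_{\bs,\bt}=\sum_{[\bu,\bv]\in\cP^\theta_\bn}Z^\theta_{\bu,\bv}$, and one last application of Lemma \ref{multiparameter BDG}, using the hypothesised bound on the alternating sum defining $Z^\theta$, forces $Z^\theta_{\bs,\bt}=0$ as $\bn\to\infty$.

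The extension to arbitrary grid-like partitions $\cQ_\bn$ with vanishing mesh follows the pattern of Lemma \ref{lem:every partition converges}: choose a dyadic $\cP_{\bk(\bn)}$ so fine that each of its cells contains at most one point of $\cQ_\bn$, bound $\|(\cP^\theta_\bk-\cA^\theta)\Xi_{\bs,\bt}\|_m$ and $\|(\cA^\theta-\cQ^\theta_\bn)\Xi_{\bs,\bt}\|_m$ for the common refinement $\cA$ via the general BDG (using the analogue of Lemma \ref{lem:technical1}, which ports to this setting without difficulty), and conclude by the triangle inequality. The main obstacle I anticipate is purely combinatorial bookkeeping: formula \eqref{ineq:lem3Result} is a sum over all splittings $\theta\setminus\eta=\theta_1\,\dot\cup\,\theta_2$, so every estimate branches into $2^{|\theta\setminus\eta|}$ subterms that must each be verified to produce a geometrically decaying contribution in $\bn$. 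The saving grace is that each subterm factorises coordinatewise, contributing either an exponent $(\beta+\gamma-1)$ (for an index in $\theta_1$) or $(\beta-1/2)$ (for an index in $\theta_2$) to the $2^{-n_i}$-factor, both strictly positive by the hypotheses on $\beta$ and $\gamma$, so summability is automatic provided the combinatorics is tracked carefully.
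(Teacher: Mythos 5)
Your proposal matches the paper's intended argument exactly: the paper itself proves Lemma \ref{lem:StochSewing} by declaring it to be the proof of Lemma \ref{lem:stochastic sewing simplified} combined with Lemma \ref{lem:every partition converges}, with the simplified BDG inequality of Lemma \ref{lem:muti-parameterBDG-simplified} replaced by the general one of Lemma \ref{multiparameter BDG}, which is precisely what you carry out. Your exponent bookkeeping (each index contributing either $2^{-n_i(\beta+\gamma-1)}$ or $2^{-n_i(\beta-1/2)}$, both decaying by the hypotheses $\beta+\gamma>1$ and $\beta>\tfrac12$) is correct and in fact more detailed than what the paper records.
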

\noindent The proof is the same as for Lemma \ref{lem:stochastic sewing simplified} combined with Lemma \ref{lem:every partition converges}, if one replaces Lemma \ref{lem:muti-parameterBDG-simplified}'s BDG inequality with the one from Lemma \ref{multiparameter BDG}.

\section{Local non-determinism of Gaussian random fields}
\label{LND section}

\noindent Our main application of the just established multiparameter stochastic sewing lemma consists of regularity estimates of Gaussian stochastic fields. However, in order to obtain such results, we require processes that enjoy another important structural property: local non-determinism (or abbreviated LND in the following). In the one-parameter setting, it is by now quite well understood that LND is the crucial property allowing to establish pathwise regularization by noise results in the spirit of \cite{gubicat}, as it implies $\rho$-irregularity of the process (see \cite[Theorem 30]{prevalence}). It is therefore natural to expect that LND plays an equally instrumental role in the multiparameter setting we are concerned with. We therefore begin this section with an overview of known results in the literature on LND for stochastic fields due to Xiao and coauthors, before introducing the notions of additive and multiplicative LND that will be used later on. As an example, we briefly discuss the fractional Brownian sheet, for which the LND property can be easily checked by hand.

\subsection{Sectorial and Strong LND}
In this subsection, we recall the well-established notions of sectorial local non-determinism and strong local non-determinism. We discuss the fractional Brownian sheet in this context.

\begin{defn}[Sectorial LND] 
\label{sectorial dfn}\cite[Section 2.4, (C3)]{Xiao} Let $\zeta \in \RR^d_+$. A Gaussian random field $\{W_\bt\}_{\bt\in \RR^d}$ admits the $\zeta$-sectorial local non-determinism property if there exists for any $\epsilon\in (0, 1]$ a constant $c(\epsilon)>0$ such that for any $m\geq 1$ and $\bu, \bt^1, \dots \bt^m\in [\epsilon, \infty)^d$ we have a.s.
\begin{equation*}
    \mbox{Var}(W_\bu\ |\ W_{\bt^1}, \dots W_{\bt^m})\geq  c(\epsilon)\sum_{i=1}^d \min_{1\leq k\leq m}|\bt^k_i-\bu_i|^{2\zeta_i}. 
\end{equation*}
\end{defn}

\noindent Let $\cF_\bs^{\boldsymbol\epsilon} := \overline{\sigma(W_\br~\vert~ \boldsymbol\epsilon<\br<\bs)}$. Then the sectorial LND property does not just hold by conditioning on finitely many time points $\bt^1,\dots,\bt^m$, but also if we condition on $\cF_\bt^\bepsilon$, as the following corollary shows:


\begin{cor}\label{cor:sectorial_non_determinism}
    A continuous Gaussian process $W_\bt$, with filtration $\cF_\bt^{\boldsymbol\epsilon}$ as above, that admits the $\zeta$-sectorial local non-determinism property satisfies a.s. for $\bt\ge \bs\ge \boldsymbol{\epsilon}$
   \[
        \mbox{Var}(W_\bt| \mathcal{F}_\bs^{\boldsymbol\epsilon})\geq c(\boldsymbol\epsilon)\sum_{i=1}^d |t_i-s_i|^{2\zeta_i}
   \]
\end{cor}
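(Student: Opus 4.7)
The plan is to approximate the conditioning $\sigma$-algebra $\cF_\bs^{\boldsymbol\epsilon}$ by finitely generated sub-$\sigma$-algebras and invoke Definition \ref{sectorial dfn} together with the Gaussian structure. Recall that for a centered Gaussian family, the conditional variance on a sub-$\sigma$-algebra $\cG$ generated by Gaussian variables is deterministic and equals the squared $L^2(\Omega)$-distance from $W_\bt$ to the closed linear span in $L^2$ of those generators; in particular it is monotone non-increasing in $\cG$.

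First, I would fix a countable dense subset $\{\br_n\}_{n\ge 1}$ of the open box $(\boldsymbol\epsilon,\bs) := \prod_{i=1}^d (\epsilon_i,s_i)$. Using continuity of $W$, up to the standard completion one has
\[
\cF_\bs^{\boldsymbol\epsilon} = \overline{\sigma(W_{\br_n} : n\ge 1)}.
\]
For a centered Gaussian field this implies the $L^2$-convergence
\[
\mathrm{Var}(W_\bt \mid \cF_\bs^{\boldsymbol\epsilon}) = \lim_{m\to\infty}\mathrm{Var}(W_\bt \mid W_{\br_1},\dots,W_{\br_m}),
\]
since the orthogonal projections onto the increasing sequence of closed subspaces converge strongly to the projection onto their closed union.

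Next, I would apply Definition \ref{sectorial dfn} to the points $\bu := \bt$ and $\bt^k := \br_k \in (\boldsymbol\epsilon,\bs)$, $k=1,\dots,m$. The key elementary observation is that since $\bt \ge \bs$ and $\br_k \le \bs$ coordinate-wise for every $k$, we have $|t_i - r^k_i| = t_i - r^k_i \ge t_i - s_i = |t_i - s_i|$ for each $i\in[d]$ and each $k$. Consequently,
\[
\min_{1\le k\le m}|t_i - r^k_i|^{2\zeta_i} \ge |t_i - s_i|^{2\zeta_i},
\]
which combined with the sectorial LND inequality yields, uniformly in $m$,
\[
\mathrm{Var}(W_\bt \mid W_{\br_1},\dots,W_{\br_m}) \ge c(\boldsymbol\epsilon) \sum_{i=1}^d |t_i - s_i|^{2\zeta_i}.
\]
Passing to the limit $m\to\infty$ concludes the proof.

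I do not expect a serious obstacle here: the argument is a standard limiting procedure exploiting the Gaussian nature of $W$. The one subtle point to verify carefully is the identification $\cF_\bs^{\boldsymbol\epsilon} = \overline{\sigma(W_{\br_n} : n\ge 1)}$, which relies on the (a.s.) continuity of $W$; if one only has continuity in probability or separability of the process, an additional density argument on a separating countable set suffices. Everything else is linear algebra in the Gaussian Hilbert space generated by $W$.
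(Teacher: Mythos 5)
Your proposal is correct and follows essentially the same route as the paper: approximate $\cF_\bs^{\boldsymbol\epsilon}$ by $\sigma$-algebras generated by finitely many points of $[\boldsymbol\epsilon,\bs]$, apply the finite-point sectorial LND bound using the observation that every conditioning point lies coordinate-wise below $\bs$ so that $\min_k|t_i-r^k_i|\ge|t_i-s_i|$, and pass to the limit. The only (immaterial) difference is in how the limit is justified: the paper invokes L\'evy's upward theorem applied to $W_\bt$ and $W_\bt^2$ along a dyadic exhaustion, whereas you use the deterministic Gaussian conditional variance and strong convergence of orthogonal projections in the Gaussian Hilbert space.
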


\begin{proof}
    Let $\bs_n^\bk = \boldsymbol\epsilon + \bk\cdot\frac{\bs-\boldsymbol{\epsilon}}{2^n}$ (recall that $\bk\cdot\bx = (k_1x_1,\dots,k_dx_d)$ denotes point-wise multiplication) for $k= \{0,\dots,2^n\}^d$ be a dyadic partition of $[\boldsymbol\epsilon,\bs]$\,. Further let $\cG_n = \overline{\sigma(W_{s_n^\bk}~\vert~\bk\in\{0,\dots,2^n\}^d)}$. Since $W_\bt$ is a continuous process and point-wise limits preserve measurability, we have that
    \begin{equation*}
        \cF_\bs^{\boldsymbol\epsilon} = \cG_\infty := \overline{\sigma\left(\bigcup_{n\in\NN}\cG_n\right)}\,.
    \end{equation*}
    It holds that $W_\bt\in L_2(\Omega)$, so we can apply Lévy's upwards theorem to $W_t$ and $W_t^2$ to get that
    \begin{align*}
        E(W_\bt~\vert~\cF_\bs^{\boldsymbol\epsilon}) &= \lim_{n\to\infty}E(W_\bt~\vert~\cG_n) \\
        E(W_\bt^2~\vert~\cF_\bs^{\boldsymbol\epsilon}) &= \lim_{n\to\infty}E(W_\bt^2~\vert~\cG_n)
    \end{align*}
    as a.s. limits. Thus, we get that a.s.
    \begin{align*}
        \mbox{Var}(W_\bt~\vert~ \cF_\bs^{\boldsymbol\epsilon}) &= \lim_{n\to\infty}(E(W_\bt^2~\vert~\cG_n)-E(W_\bt~\vert~\cG_n)^2) \\
        &\ge c(\boldsymbol\epsilon)\sum_{i=1}^d\abs{t_i-s_i}^{2\zeta_i}\,,
    \end{align*}
    where we used that $\min_{\bk}\abs{t_i-(s_n^{\bk})_i}\ge\abs{t_i-s_i}$ holds.
\end{proof}


\begin{rem}
    Remark that in the very last step, we exploited the ordering $\bs\leq \bt$ introduced in the notation yielding the filtration $(\mathcal{F}^\epsilon_\bs)_\bs$ as defined above. This step would not have been possible if instead, we would have worked with the filtration $(\mathcal{F}^{\epsilon, *}_\bs)_\bs$ (refer again to Figure \ref{filtrations}).
\end{rem}
\begin{defn}[Strong LND] \cite[Section 2.4, (C3')]{Xiao} Let $\zeta\in \RR^d_+$. A Gaussian random field $\{W_\bt\}_{\bt\in \RR^d}$ with stationary increments admits the $\zeta$-strong local non-determinism property if there exists a constant $c>0$ such that for any $m\geq 1$ and $\bu, \bt^1, \dots \bt^m\in \RR^d$ we have
\begin{equation*}
    \mbox{Var}(W_\bu\ |\ W_{\bt^1}, \dots W_{\bt^m})\geq  c\min_{1\leq k\leq m}\left(\sum_{i=1}^d |\bt^k_i-\bu_i|^{\zeta_i}\right)^2. 
\end{equation*}
\end{defn}
 \begin{cor}
        Let $X$ be a continuous Gaussian process and $ \mathcal{F}_\bs:=\overline{\sigma(W_\br~\vert~ \boldsymbol\epsilon<\br<\bs)}$ be its associated strong past natural filtration. Assume $X$ admits the $\zeta$-strong local non-determinism property satisfies for $\bt>\bs$
        \begin{equation}
            \mbox{Var}(W_\bt| \mathcal{F}_\bs)\geq c\sum_{i=1}^d |t_i-s_i|^{2\zeta_i}
            \label{strong LND impli}
        \end{equation}
    \end{cor}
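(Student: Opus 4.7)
The plan is to mimic almost verbatim the argument already carried out for the sectorial LND case in Corollary \ref{cor:sectorial_non_determinism}, only adjusting the final combinatorial lower bound to accommodate the stronger (i.e.\ sum-inside-the-square) form of the strong LND hypothesis. The one real thing to check is that the stronger pointwise bound supplied by strong LND still survives both the passage to the limit along a dyadic approximation of $\mathcal{F}_\bs$ and the elementary inequality $(\sum a_i)^2 \geq \sum a_i^2$ needed to arrive at the form stated in \eqref{strong LND impli}.

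More concretely, I would set $\bs_n^\bk := \boldsymbol\epsilon + \bk\cdot \frac{\bs-\boldsymbol\epsilon}{2^n}$ for $\bk\in\{0,\dots,2^n\}^d$ and define $\cG_n := \overline{\sigma(W_{\bs_n^\bk} \mid \bk\in\{0,\dots,2^n\}^d)}$. Continuity of the sample paths together with the fact that the dyadic grid is dense in $[\boldsymbol\epsilon,\bs]$ gives $\mathcal{F}_\bs = \overline{\sigma\left(\bigcup_n \cG_n\right)}$. Applying Lévy's upward theorem to $W_\bt$ and $W_\bt^2$ (both in $L_2$ by Gaussianity) yields a.s.
\[
\mbox{Var}(W_\bt \mid \mathcal{F}_\bs) \;=\; \lim_{n\to\infty} \Bigl( \EE[W_\bt^2\mid \cG_n] - \EE[W_\bt\mid \cG_n]^2 \Bigr) \;=\; \lim_{n\to\infty}\mbox{Var}(W_\bt \mid W_{\bs_n^\bk},\, \bk\in\{0,\dots,2^n\}^d).
\]

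The $\zeta$-strong LND property then bounds each term in the limit from below by $c\min_{\bk}\bigl(\sum_{i=1}^d |t_i - (\bs_n^\bk)_i|^{\zeta_i}\bigr)^2$. Here comes the only genuinely new observation compared to Corollary \ref{cor:sectorial_non_determinism}: since $(\bs_n^\bk)_i \leq s_i \leq t_i$ for every admissible $\bk$, we have $|t_i - (\bs_n^\bk)_i| \geq |t_i - s_i|$ for every $i\in[d]$, and hence
\[
\min_{\bk}\Bigl(\sum_{i=1}^d |t_i - (\bs_n^\bk)_i|^{\zeta_i}\Bigr)^2 \;\geq\; \Bigl(\sum_{i=1}^d |t_i - s_i|^{\zeta_i}\Bigr)^2 \;\geq\; \sum_{i=1}^d |t_i - s_i|^{2\zeta_i},
\]
where the last step uses $(\sum_i a_i)^2 \geq \sum_i a_i^2$ for nonnegative $a_i$. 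Passing this uniform lower bound through the limit yields the claim.

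I do not expect any serious obstacle here: the main subtlety is purely bookkeeping, namely ensuring that the closest point in the dyadic grid is always dominated (componentwise) by $\bs$, which is precisely what makes the $\min$ collapse to an expression at $\bs$. The Lévy upward part is standard and is already used in the proof of Corollary \ref{cor:sectorial_non_determinism}, so the argument should be presented as a short variant of that proof rather than as an independent computation.
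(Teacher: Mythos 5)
Your proposal is correct and is precisely the "simple adaptation" of the proof of Corollary \ref{cor:sectorial_non_determinism} that the paper invokes: the dyadic grid, Lévy's upward theorem applied to $W_\bt$ and $W_\bt^2$, and the collapse of the minimum using $(\bs_n^\bk)_i\le s_i\le t_i$ are all identical, with the only new ingredient being the elementary inequality $\bigl(\sum_i a_i\bigr)^2\ge\sum_i a_i^2$ for nonnegative $a_i$, which you correctly identify and apply. No gaps.
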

    \begin{proof}
        This follows from a simple adaption of the  proof in Corollary \ref{cor:sectorial_non_determinism}. 
    \end{proof}
We now introduce the main example we shall discuss throught this section and the remainder of the paper.
\begin{defn}[Fractional Brownian sheet]
\label{fbs}
    For a given vector $H=(H_1, \dots, H_d)$ with $H_i \in (0,1)$ we call a real valued centered Gaussian field with covariance function 
    \[
    \mathbb{E}[W_\bt^HW_\bs^H]=\prod_{i=1}^d \left( |s_i|^{2H_i}+|t_i|^{2H_i}-|t_i-s_i|^{2H_i}\right)
    \]
    a $H$-fractional Brownian sheet. For $W$ a Brownian sheet, the stochastic field
    \begin{equation}
        W^H_\bt=\kappa_H^{-1/2}\int_{-\infty}^{t_1}\dots \int_{-\infty}^{t_d} \prod_{i=1}^d g_{H_i}(s_i, t_i)dW_{\bs}
        \label{moving average}
    \end{equation}
   is an $H$-fractional Brownian sheet, where
    \[
    g_{H_i}(s, t)=(t-s)_+^{H_i-1/2}-(-s_i)_+^{H_i-1/2}, \qquad \kappa_H^2=\int_{-\infty}^{1}\dots \int_{-\infty}^{1} \prod_{i=1}^d g^2_{H_i}(s_i, 1)d\bs.
    \]
    The formula \eqref{moving average} is also called moving average representation of the fractional Brownian sheet. If $W^H_{j}$ with $j\leq n$ are independent copies of an $H$-fractional Brownian sheet, we call the vector valued stochastic field given by $W^H=(W^H_1, \dots, W^H_n)$ a $(d, n)$-fractional Brownian sheet. 
\end{defn}

\begin{example}\cite[Theorem 1]{Wu2007}
If $B^H$ is an  $H$-fractional Brownian sheet, then it admits the $H$-sectorial local non-determinism property.
\end{example}

\begin{rem}
    \label{with zero}
    Note that in Definition \ref{sectorial dfn}, the fact that we strictly bounded away from the origin is instrumental for fraction Brownian sheets to fall into this class. Indeed, if $W^H$ is an $H=(H_1, \dots H_d)$ fractional Brownian sheet, then for $\bt=(t_1, 0, \dots, 0)$ we have
    \[
    0=\prod_{i=1}^d |t_i|^{2H_i}=\mbox{Var}(W_\bt)=\mbox{Var}(W_\bt |\mathcal{F}_0)<\sum_{i=1}^d |t_i|^{\zeta_i}
    \]
    for any $\zeta\in \RR^d_+$. We can therefore in particular not expect strong local non-determinism to hold. Remark also that this argument can be applied to Riemann-Liouville fields of the form 
    \[
    X_\bt=\int_{0}^{t_1}\dots \int_{0}^{t_d}K(\bt, \bs)W(d\bs)
    \]
    where $K$ such that $\int_{0}^{t_1}\dots \int_{0}^{t_d}K^2 (\bt, \bs)d\bs <\infty$. Indeed, note that
    \[
    0=\int_{0}^{t_1}\int_0^0\dots \int_{0}^{0}K^2(\bt, \bs)d\bs =\mbox{Var}(X_\bt |\mathcal{F}_\bs)<\sum_{i=1}^d |t_i|^\zeta
    \]
    for any $\zeta\in \RR^d_+$, meaning such fields can not be expected to be strongly locally non-deterministic. 
\end{rem}

\subsection*{Towards multiplicative LND} Let us point out that the proof of \cite[Theorem 1]{Wu2007} is rather involved, as it exploits Fourier analytic arguments on the level of the harmonizable representation of $W$. In the following, let us provide some non-optimal yet insightful calculations for the fractional Brownian sheet which also motivate our later Definition \ref{def:multiplicative LND} and might be of independent interest. For the sake of transparency and readability, we focus on the case $d=2$ to make the arguments as explicit and instructive as possible. However, the arguments presented can be directly generalized. We will crucially make use of the moving average representation of the fractional Brownian sheet
\begin{equation}
    B^H(\bt)=\kappa_H^{-1}\int_{-\infty}^{t_1}\int_{-\infty}^{t_2}\underbrace{\prod_{j=1}^2 \left((t_j-s_j)_+^{H_j-1/2} -(-s_j)_+^{H_j-1/2}\right)}_{=:g(\bt, \bs)}W(d\bs),
\end{equation}
where $\kappa_H$ is some normalization constant. We now split up our integration domain into the disjoint domains (refer to Figure \ref{areas}) 
\begin{equation*}
    \begin{split}
        I_1=[\bs, \bt], \quad I_2=&[(s_1, 0), (t_1, s_2)], \quad I_3=[(0, s_2), (s_1, t_2)]\\
        R_1=[-\infty, \bs], \quad R_2=&[(s_1, -\infty), (t_1, 0)], \quad R_3=[(-\infty, s_2), (0, t_2)]
    \end{split}
\end{equation*}

\begin{figure}
    \centering
    \includegraphics[scale = .7]{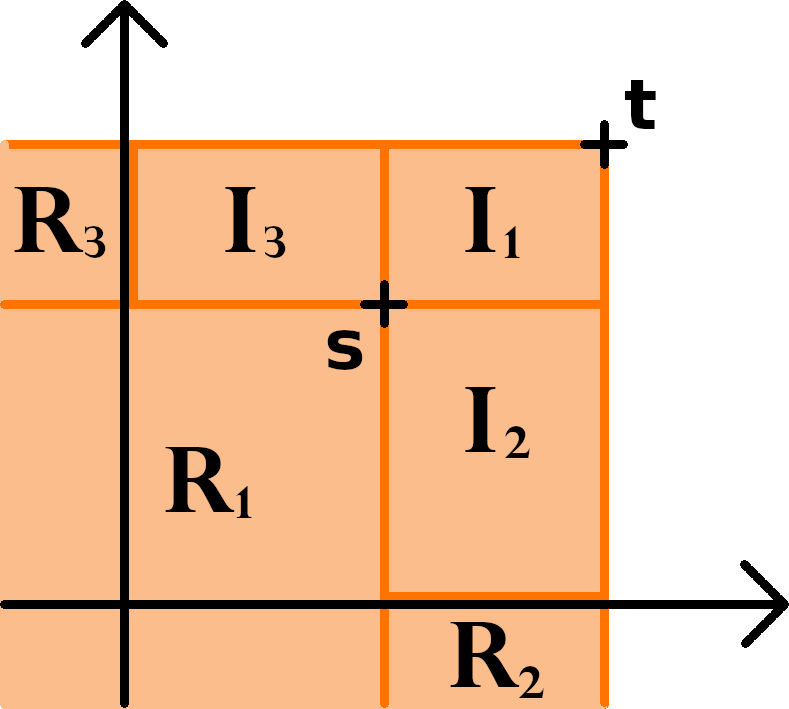}
    \caption{Areas of the respective terms}
    \label{areas}
\end{figure}

\noindent
obtaining readily 
\[
\kappa_H B^H(\bt)=\sum_{i=1}^3\int_{I_i}g(\bt, \bs)W(d\bs)+\sum_{i}^3\int_{R_i}g(\bt, \bs)W(d\bs)
\]
Note that as we integrate a Brownian sheet over disjoint domains, all the summands above are independent. Moreover, by definition of the filtration $\mathcal{F}$, we have that the term $\int_{R_1}g(\bt, \bs)W(d\bs)$ is $\mathcal{F}_\bs$ measurable, while all other summands are independent of $\mathcal{F}_\bs$. We can thus infer that 
\begin{align*}
    &\mathbb{E}[\left(\sum_{i=1}^3\int_{I_i}g(\bt, \bs)W(d\bs)+\sum_{i=1}^3\int_{R_i}g(\bt, \bs)W(d\bs)\right)|\mathcal{F}_\bs]\\
    =&\int_{R_1}g(\bt, \bs)W(d\bs)+\mathbb{E}[\sum_{i=1}^3\int_{I_i}g(\bt, \bs)W(d\bs)+\sum_{i=2}^3\int_{R_i}g(\bt, \bs)W(d\bs)]=\int_{R_1}g(\bt, \bs)W(d\bs)
\end{align*}
We, therefore, infer that 
\[
\kappa_H(B^H_\bt-\mathbb{E}[B^H_\bt|\mathcal{F}_\bs])=\sum_{i=1}^3\int_{I_i}g(\bt, \bs)W(d\bs)+\sum_{i=2}^3\int_{R_i}g(\bt, \bs)W(d\bs)
\]
Again because of pairwise independence of the above summands and independence from $\mathcal{F}_\bs$ we obtain 
\[
\kappa_H^2\mbox{Var}(B^H_\bt|\mathcal{F}_\bs)=\sum_{i=1}^3\int_{I_i}g^2(\bt, \bs)d\bs+\sum_{i=2}^3\int_{R_i}g^2(\bt, \bs)d\bs
\]
As all of the above summands are positive, we may neglect the integral coming from the domains $R_2, R_3$ for lower bounds (remark that indeed these domains can in any way not give rise to functions of increments $(t_2-s_2)$ or $(t_1-s_1)$ due to their structure and can thus not help in establishing local non-determinism properties). For the remaining terms, a direct calculation yields
\begin{equation}
    \mbox{Var}(B^H_\bt|\mathcal{F}_\bs)\gtrsim (t_1-s_1)^{2H_1}(t_2-s_2)^{2H_2}+(t_1-s_1)^{2H_1}(t_2^{2H_2}-(t_2-s_2)^{2H_2})+(t_2-s_2)^{2H_2}(t_1^{2H_1}-(t_1-s_1)^{2H_1})
    \label{explicit calc fbs}
\end{equation}
It is now readily checked that for $H_1, H_2\geq 1/2$, the above also implies 
\[
\mbox{Var}(B^H_\bt|\mathcal{F}_\bs)\gtrsim (t_1-s_1)^{2H_1}(t_2-s_2)^{2H_2}+(t_1-s_1)^{2H_1}s_2^{2H_2}+(t_2-s_2)^{2H_2}s_1^{2H_1}
\]
and thus, for $\bs\geq \epsilon$ 
\[
\mbox{Var}(B^H_\bt|\mathcal{F}_\bs)\gtrsim (\epsilon^{2H_2}+\epsilon^{2H_1})\left((t_1-s_1)^{2H_1}+(t_2-s_2)^{2H_2}\right)
\]
meaning we obtain the sectorial local non-determinism property above. In the case $H_i\leq 1/2$, it is possible to perform similar calculations, provided we restrict ourselves to a square $[\epsilon, \bT]$ and we allow the constant $c(\epsilon)$ in Definition \ref{sectorial dfn} of sectorial LND to depend on $\bT$ as well. Either way, the above calculations convey that due to the multiplicative structure of the fractional Brownian sheet (i.e. factorizing covariance kernel), 'additive' LND (formalized in Definition \ref{def:additive LND} below) can only be obtained at the price of considering fields away from zero. Also with respect to later applications to regularity studies of associated local times, this motivates the notion of 'multiplicative LND' introduced below.

\subsection{Additive and Multiplicative LND}
Let us point out that both sectorial LND and strong LND imply \eqref{strong LND impli} (albeit in the first case only 'away from zero'). As this property is instrumental in our study of associated local times, we isolate it in the following definition.

\begin{defn}[Additive LND]\label{def:additive LND}
    Let $\{X_\bt\}_{\bt\in \RR^d}$ be a Gaussian random field with $\mathcal{F}$ its natural strong past filtration. We say $X$ is additively local non-deterministic if there exists a constant $c>0$ and a multi-index $\zeta \in (0,1)^d$ such that for any $\bt\ge\bs\ge\mathbf{0}$ we have 
    \begin{equation}\label{eq:add LND}
        \mbox{Var}(X_{\bt}|\cF_\bs)\geq c \sum_{i=1}^d |t_i-s_i|^{2\zeta_i}. 
    \end{equation}
\end{defn}

\begin{defn}[multiplicative LND]\label{def:multiplicative LND}
         Let $\{X_\bt\}_{\bt\in \RR^d}$ be a Gaussian random field with $\mathcal{F}$ its natural strong past filtration. We say $X$ is multiplicatively local non-deterministic if there exists a constant $c>0$ and a multi-index $\zeta \in (0,1)^d$ such that for any $\bt\ge\bs\ge\mathbf{0}$ we have 
    \begin{equation*}
        \mbox{Var}(X_{\bt}|\cF_\bs)\geq c \prod_{i=1}^d |t_i-s_i|^{2\zeta_i}. 
    \end{equation*}
\end{defn}
\begin{example}
  From \eqref{explicit calc fbs}, it follows immediately that any $H$-fractional Brownian sheet is $H$-multiplicatively LND. By Remark \ref{with zero}, it is not $H$-additively LND.
\end{example}
\noindent In the following Lemma, we observe that more generally, additive LND is a property that can only come from boundary terms in the following sense. We restrict ourselves again to the case $d=2$ for readability, but mention that similar arguments can be easily extended to the general case.


\begin{lem}
\label{no additive LND}
    Let $(Z_\bt)_\bt$ be a two-parameter stochastic field with deterministic boundary, i.e. $Z_{(t_1, 0)}, Z_{(0, t_2)}$ deterministic for any $t_1, t_2\geq0$. Then $Z$ can not be additively LND.
\end{lem}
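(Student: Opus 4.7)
The plan is to obtain a direct contradiction by evaluating the additive LND inequality \eqref{eq:add LND} along the boundary axes, where the hypothesis forces the conditional variance to vanish while the right-hand side does not.

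First I would fix $0 \le s_1 < t_1$ and take the two configurations $\bs = (s_1, 0)$ and $\bt = (t_1, 0)$. Both points satisfy $\bt \ge \bs \ge \boo$, so they lie in the regime where additive LND is supposed to apply. Since $Z_\bt = Z_{(t_1,0)}$ is assumed deterministic, we have $\mathrm{Var}(Z_\bt \,|\, \cF_\bs) = 0$ almost surely. On the other hand, the hypothesis \eqref{eq:add LND} would give
\begin{equation*}
0 \;=\; \mathrm{Var}(Z_\bt \,|\, \cF_\bs) \;\ge\; c\,|t_1 - s_1|^{2\zeta_1} + c\,|0-0|^{2\zeta_2} \;=\; c\,|t_1-s_1|^{2\zeta_1} \;>\; 0,
\end{equation*}
which is the desired contradiction. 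A symmetric choice on the other boundary axis works as well, so nothing about the argument is specific to which axis is used.

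There is essentially no analytic obstacle here; the argument is structural. The only thing worth being careful about is the quantifier range in Definition \ref{def:additive LND}, namely that $\bs$ is allowed to touch the boundary (in contrast to the sectorial LND setting of Definition \ref{sectorial dfn}, where one stays away from the origin via the parameter $\epsilon$). This is precisely the feature that makes boundary determinism incompatible with additive LND, and it explains at the conceptual level the observation already made in Remark \ref{with zero} for fractional Brownian and Riemann--Liouville sheets: any factorized structure that forces the field to vanish (or, more generally, to be deterministic) along a coordinate axis automatically excludes the additive LND lower bound. In higher dimension $d \ge 2$, the same reasoning applies verbatim, choosing $\bs$ and $\bt$ to differ only in one coordinate while keeping another coordinate equal to $0$.
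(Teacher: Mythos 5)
Your proof is correct and follows essentially the same route as the paper's: both derive a contradiction by testing the additive LND inequality \eqref{eq:add LND} along a boundary axis, where determinism of $Z$ forces the (conditional) variance to vanish while the lower bound $c\,|t_1-s_1|^{2\zeta_1}$ remains strictly positive. The paper phrases this via the algebraic decomposition of $Z_\bt$ into boundary and rectangular increments with $\bs=\boo$, whereas you apply the definition directly with $\bs=(s_1,0)$; this is a cosmetic difference only, and your version is if anything slightly cleaner.
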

\begin{proof}
    We have the algebraic decomposition
    \[
    Z_\bt=Z_\mathbf{0}+\square^{(1)}_{\boo, (t_1, 0)}Z+\square^{(2)}_{\boo, (0, t_2)}Z+\square_{\boo, \bt}Z.
    \]
    Since the boundary terms are assumed to be deterministic, we have for $\bt=(t_1, 0)$ that $\square_{\boo,\bt} Z =0$, and therefore  
    \[
    \mbox{Var}(Z_\bt)=\mbox{Var}(\square_{\boo, \bt}Z)=\mbox{Var}(0)=0<|t_1|^\zeta
    \]
    for any $\zeta\in \RR^2_+$. The field $Z$ does therefore not satisfy  \eqref{eq:add LND} for any $\bt\ge\bs\ge \mathbf{0}$
\end{proof}
\subsection*{Additive LND from boundary terms} On the other hand, if $(Z_\bt)_\bt$ be a two parameter stochastic field such that  $Z_{(t_1, 0)}, Z_{(0, t_2)}, \square_{\boo,\bt}Z$ are independent for any $t_1, t_2>0$, then we have 
\[
\mbox{Var}(Z_\bt)\geq \mbox{Var}(\square^{(1)}_{\boo, (t_1, 0)}Z)+\mbox{Var}(\square^{(2)}_{\boo, (0, t_2)}Z)
\]
i.e. we can expect that additive LND might hold thanks to the effects of the boundary. To illustrate this on a concrete example, let us consider stochastic fields of the form
\begin{equation}
    X_{t_1, t_2} = \int_{-\infty}^{t_1}K_1(t_1, r)W^1(dr)+\int_{-\infty}^{t_2}K_2(t_1, r)W^2(dr) + \int_{-\infty}^{t_1}\int_{-\infty}^{t_2} K(\bt,\bs) \dd W_\bs.
\end{equation}
where $W^1, W^1$ are classical one-parameter Brownian motions and $W$ is Brownian sheet such that $Q^1, W^2, W$ are independent. We also assume for the kernels
\[
\int_{-\infty}^{t_1}K^2_1(t_1, r)dr+\int_{-\infty}^{t_2}K^2_2(t_2, r)dr+\int_{-\infty}^{t_1}\int_{-\infty}^{t_2}K^2(\bt, \bs)d\bs<\infty
\]
We consider the filtration $\mathcal{F}$ generated by $W^1, W^2, W$, which can be shown to be commuting. Going through similar calculations as above, one can show that 
\begin{equation}
    \mbox{Var}(X_\bt |\mathcal{F}_\bs)= \int_{s_1}^{t_1}K_1^2(t_1, r)dr+\int_{s_2}^{t_2}K_2^2(t_2, r)dr+\mbox{Var}\left(\int_{-\infty}^{t_1}\int_{-\infty}^{t_2}K(\bt, \bs)|\mathcal{F}_\bs\right)
\end{equation}
Now as discussed above, the last summand in the above expression will never be able to give us additive LND (at least under the integrability condition imposed on $K$). If we are interested in LND behavior on $[\boo, \bT]$ and not just away from zero, we can therefore employ the bound 
\begin{equation}\label{term:mult-part}
\mbox{Var}\left(\int_{-\infty}^{t_1}\int_{-\infty}^{t_2}K(\bt, \bs)\middle\vert\mathcal{F}_\bs\right)\geq \int_{s_1}^{t_1}\int_{s_2}^{t_2}K^2(\bt, \bs)d\bs 
\end{equation}
without losing critical terms. On the other hand, note that the boundary terms of $X$ do give rise to an additive structure. Provided for example $K_i^2(t, s)\geq |t-s|^{2H_i-1}$, and $K^2(\bt, \bs)\geq |t_1-s_1|^{2\tilde{H}_1-1}|t_2-s_2|^{2\tilde{H}_2-1}$ this yields 
\begin{equation*}
      \mbox{Var}(X_\bt |\mathcal{F}_\bs)\gtrsim |t_1-s_1|^{2H_1}+|t_2-s_2|^{2H_2}+|t_1-s_1|^{2\tilde{H}_1}|t_2-s_2|^{2\tilde{H}_2}
\end{equation*}
i.e. we see the multiplicative structure coming from the Riemann-Liouville type integral and the additive structure coming from the boundary terms. 

\begin{rem}
As is clear from the Definition \ref{sectorial dfn} and Corollary \ref{cor:sectorial_non_determinism}, for any stochastic field $X$ which is $\zeta$ sectorially LND with respect to its strong past natural filtration, the stochastic field 
\[
\tilde{X}_\bt:=X_{\bt+\mathbf{\bepsilon}}
\]
is $\zeta$-additively LND, where $\mathbf{\bepsilon}=(\epsilon, \dots \epsilon)$. Again, in this easy observation, the role of boundary for additive LND becomes clear: While the fractional Brownian sheet for example is zero on the boundary of $[0, \infty)^d$, we do not have additive LND right away. However, if we shift the field away from zero as above, the terms we cut away in \eqref{term:mult-part} become non-trivial, thus allowing for additive LND. 
\end{rem}

\section{Local time regularity of locally non-deterministic Gaussian fields}\label{sec_Local_Time}
\noindent Let $(W_\bt)_\bt$ be a $(d, n)$ Gaussian field. We will here establish the regularity of the associated occupation measure $\mu$ on a scale of Bessel potentials, provided some additive or multiplicative LND condition is imposed. The occupation measure is defined in the following way
\begin{equation*}
    \mu(A)=\lambda(\{\mathbf{0}\leq \bs \leq \bt| W_{\bs}\in A\}),  \quad A\subset \RR^n
\end{equation*}
where $\lambda$ denotes the Lebesgue measure. 
If the occupation measure is absolutely continuous with respect to the Lebesgue measure it admits a density called the local time (associated to $W$).

The Fourier transform of the occupation measure is given by the following
\begin{align*}
    \square_{\bs, \bt}\hat{\mu}(z)&=\int_{\bs}^{\bt} \exp(i\langle W_{\br}, z\rangle)d\br.
\end{align*}

\noindent
With the goal of proving joint space-time regularity of the occupation measure $\mu$ we will apply use the multiparameter stochastic sewing lemma to show that when defining  
\[
A_{\bs, \bt}:=\mathbb{E}[\int_{\bs}^{\bt} \exp(i\langle W_{\br}, z\rangle)d\br|\mathcal{F}_\bs], 
\]
then  $A$ admits a stochastic sewing $IA$,  and moreover, the sewing recovers our original quantity of interest in the sense that
\[
(IA)_\bt=\int_0^\bt \exp(i\langle W_\br, z\rangle)d\br. 
\]
To this aim, we will use the  simplified setting of the stochastic sewing lemma (Lemma \ref{lem:stochastic sewing simplified}). As explained just before Lemma \ref{lem:stochastic sewing simplified}, germs of the form
    \[
    A_{\bs,\bt}=\EE^\eta_{\bs}\left[ \int_\bs^{\bt} \exp(i\langle z,X_{\br}\rangle ) \dd \br \right]
    \]
  satisfy for all $\emptyset\neq\eta\subset \theta$
   \begin{equation*}
       \EE^\eta_{\bs}\left[ \delta^\theta_\bu A_{\bs,\bt}  \right]=0.
   \end{equation*}
due to the tower property, and so the simplified sewing lemma is directly applicable.

\begin{thm}\label{thm:bound moment fourier}
For some $\zeta\in (0,1)^d$ and $[\mathbf{0},\bT]\subset \RR_+^d$,  let $(W_\bt)_{\bt\in [\mathbf{0},\bT]}$ be an $n$-dimensional Gaussian random field on a filtered probability space $(\Omega,\cF,(\cF_{\bt}),\PP) $ which is $\zeta$--{\bf additive}--LND with respect to the strong past filtration $(\mathcal{F}_\bt)$ assumed to be commuting. 
Choose an $\eta\in \RR_+^d$ which is such that for all $i=1,...,d$
\begin{equation}
   \frac{1}{2}>\eta\cdot \zeta
\end{equation}
Then the Fourier transform of the occupation measure $\mu$ associated to $W$ satisfies for all $z\in \RR^n$
\begin{equation}
       \label{eq:additive stochastic bound}\|\square_{\bs,\bt}\hat{\mu} (z)\|_{m}\lesssim (1+|z|^2)^{-(\eta_1+...+\eta_d)} m_{\mathbf{1}-\eta\cdot \zeta}(\bs,\bt). 
\end{equation}
Furthermore, if $(W_\bt)_{\bt\in [\mathbf{0},\bt]}$ is $\zeta$--{\bf multiplicative}-LND with respect to the strong past filtration $(\mathcal{F}_\bt)$ assumed to be commuting, then for any $\theta>0$ satisfying 
\[
 \frac{1}{2 \max_i \zeta_i}>\theta
\]
for all $i=1,...,d$, 
then the following bound holds, 
\begin{equation}\label{eq:multiplicative stochastic bound}
\|\square_{\bs,\bt}\hat{\mu} (z)\|_{m}\lesssim (1+|z|^2)^{-\theta} m_{\mathbf{1}-\theta\zeta}(\bs,\bt). 
\end{equation}
\end{thm}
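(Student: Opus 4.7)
The plan is to apply the simplified multiparameter stochastic sewing lemma (Corollary \ref{cor:sewing_alpha=beta}) to the adapted germ
\[
A_{\bs,\bt} := \EE^{[d]}_\bs \square_{\bs,\bt}\hat{\mu}(z) = \int_\bs^\bt \EE^{[d]}_\bs e^{i\langle W_\br, z\rangle} \dd \br,
\]
where the second equality follows from Fubini and the tower property. The first step is a deterministic pointwise bound on $A_{\bs,\bt}$: since the conditional law of $W_\br$ given $\cF_\bs$ is Gaussian with componentwise independent components, the conditional characteristic function reads
\[
\bigl|\EE^{[d]}_\bs e^{i\langle W_\br, z\rangle}\bigr| = \exp\bigl(-\tfrac{1}{2}|z|^2\,\mbox{Var}(W^{(1)}_\br\,|\,\cF_\bs)\bigr).
\]
Inserting the additive (resp.\ multiplicative) LND lower bound and using the interpolation $e^{-x}\le C_\eta(1+x)^{-\eta}$ together with Fubini, one integrates explicitly in $\br$ and obtains the stated bounds on $|A_{\bs,\bt}|$, hence on $\|A_{\bs,\bt}\|_m$. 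The conditions $\eta_i\zeta_i<1/2$ (resp.\ $\theta\zeta_i<1/2$ for every $i$) are precisely what guarantees integrability of the resulting power singularities $u^{-2\zeta_i\eta_i}$ (resp.\ $u^{-2\zeta_i\theta}$).

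Next, the hypothesis $\EE^\eta_\bs\delta^\theta_\bu A_{\bs,\bt}=0$ for $\emptyset\neq\eta\subset\theta\subset[d]$ must be verified to apply Corollary \ref{cor:sewing_alpha=beta}. This is exactly the observation recalled just before Lemma \ref{lem:stochastic sewing simplified}: writing $\tilde A_{\bs,\bt}:=\square_{\bs,\bt}\hat{\mu}(z)$, which is rectangularly additive in each coordinate, the tower property together with the commuting identity $\EE^i_\bs\EE^i_{r_i}=\EE^i_\bs$ (valid for $s_i\le r_i$) collapses the conditional expectations appearing in the signed expansion of $\delta^\theta_\bu A_{\bs,\bt}$ back to $\EE^{[d]}_\bs\delta^\theta_\bu \tilde A_{\bs,\bt}=0$.

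The crux is then to identify the resulting sewing $\cI^{[d]}A$ with $\tilde A$, for after this the theorem follows from the bound of Corollary \ref{cor:sewing_alpha=beta}. I would invoke the uniqueness statement on the candidate family
\[
J^\theta_{\bs,\bt}:=\EE^{\theta^c}_\bs\,\tilde A_{\bs,\bt}, \qquad \theta\subset[d],
\]
which trivially satisfies $J^\emptyset=A$, $J^{[d]}=\tilde A$, and the adaptedness property. Additivity in the indices of $\theta$ follows from the fact that $\cF^{\theta^c}_\bs$ depends only on coordinates of $\bs$ indexed by $\theta^c$, so a $\delta^i$ with $i\in\theta$ commutes past $\EE^{\theta^c}_\bs$ and reduces to $\EE^{\theta^c}_\bs\delta^i\tilde A=0$. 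The last condition of uniqueness reduces, via the commuting identity $\EE^\eta_\bs\EE^{(\theta')^c}_\bs=\EE^{\eta\cup(\theta')^c}_\bs$ and the inclusion--exclusion identity $\sum_{\rho\subset\eta}(-1)^{|\rho|}=0$ for $\eta\neq\emptyset$, to a straightforward verification. Uniqueness then forces $\cI^{[d]}A=\tilde A$.

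The main technical point is the mismatch between the coordinate-wise bound $|A_{\bs,\bt}|\lesssim \prod_i|t_i-s_i|^{1-\eta_i\zeta_i}$ and the scalar-exponent formulation of Corollary \ref{cor:sewing_alpha=beta}, which would only yield the weaker bound $\lesssim m_{\alpha[d]}(\bs,\bt)$ with $\alpha=\min_i(1-\eta_i\zeta_i)$. To preserve the product structure, I would use the coordinate-wise extension of Corollary \ref{cor:sewing_alpha=beta} in which $m_{\alpha[d]}$ is replaced by $m_\balpha$ throughout. Its proof is identical to that of Lemma \ref{lem:stochastic sewing simplified}, since in the key Cauchy estimate each direction contributes an independent geometric factor $2^{-n_i(\alpha_i-1/2)}$, summable as soon as $\alpha_i>1/2$ for every $i$. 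With this minor upgrade, the sewing bound on $\|\cI^{[d]}A\|_m$ combined with the identification above yields \eqref{eq:additive stochastic bound} and \eqref{eq:multiplicative stochastic bound}.
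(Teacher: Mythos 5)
Your proposal follows the paper's proof in all of its main steps: the same germ $A_{\bs,\bt}(z)=\EE^{[d]}_\bs\square_{\bs,\bt}\hat\mu(z)$, the vanishing of $\EE^\eta_\bs\delta^\theta_\bu A$ via the tower property, the Gaussian conditional characteristic function bounded by $\exp(-\tfrac12|z|^2\,\mathrm{Var}(W^{(1)}_\br|\cF_\bs))$, insertion of the additive or multiplicative LND lower bound, the interpolation inequality trading exponential decay in $z$ against an integrable singularity in $\br$, and finally Corollary \ref{cor:sewing_alpha=beta}. The one place where you genuinely diverge is the identification $\cI^{[d]}A=\square_{\bs,\bt}\hat\mu(z)$: the paper does this by directly comparing the Riemann sums $\cP^{[d]}_\bn A(z)_{\bs,\bt}$ with $\square_{\bs,\bt}\hat\mu(z)$, estimating $\|\EE[e^{i\langle z,W_\br\rangle}|\cF_\bu]-e^{i\langle z,W_\br\rangle}\|_m$ uniformly over small rectangles and invoking continuity of $W$, whereas you invoke the uniqueness clause with the explicit candidate family $J^\theta_{\bs,\bt}=\EE^{\theta^c}_\bs\square_{\bs,\bt}\hat\mu(z)$. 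Your route is more algebraic and dispenses with the continuity argument; it is correct, but two points deserve to be spelled out: the adaptedness of $J^\theta$ is not quite ``trivial'' (you need the commuting property to see that $\EE^{\theta^c}_\bs$ of an $\cF_\bt$-measurable variable is $\cF_{\pi^{\theta^c}_\bs\bt}$-measurable, hence $\cF_\bt$-measurable), and you should record the elementary bound $\|J^\theta_{\bs,\bt}\|_m\le m_{\mathbf 1}(\bs,\bt)$ so that property (iv') holds with some $z$-dependent constant, which suffices for uniqueness at fixed $z$. Your observation about the scalar-versus-vector exponent in $C^\alpha_2L_m$ is well taken --- the paper silently uses the vector value $\alpha=\mathbf 1-\eta\cdot\zeta$ --- and your coordinate-wise extension is exactly what is needed. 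One bookkeeping slip: with your normalization $e^{-x}\le C_\eta(1+x)^{-\eta}$ the singularity is $|r_i-s_i|^{-2\zeta_i\eta_i}$, which integrates to $|t_i-s_i|^{1-2\zeta_i\eta_i}$ rather than to $m_{\mathbf 1-\eta\cdot\zeta}$, and the sewing would then require $\eta_i\zeta_i<1/4$; to land on the stated exponents you should use, as the paper does, $e^{-ax^2}\le e^{a}\bigl(a(1+x^2)\bigr)^{-\eta/2}$, which yields the singularity $|r_i-s_i|^{-\zeta_i\eta_i}$ (integrable whenever $\eta_i\zeta_i<1$) and reserves the hypothesis $\eta_i\zeta_i<1/2$ for the sewing requirement $\alpha_i>1/2$.
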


\begin{proof}
  We begin to recall the definition
    \begin{equation}
        A_{\bs,\bt}(z) := \int_{\bs}^\bt \EE[\exp(i\langle z,W_\br \rangle )|\cF_{\bs}]\dd r.
    \end{equation}
One can readily check that by the tower property of conditional expectations, we have that for any $\bs\le \bu\le \bt$  and $\emptyset\neq\eta\subset\theta\subset [d]$
\[
\EE^\eta_\bs\delta_\bu^\theta A_{\bs,\bt}(z)=0. 
\]
Furthermore, using the assumption that $W$ is a Gaussian random field, we have that 
\[
A_{\bs,\bt}(z) = \int_\bs^\bt \exp\left( i\langle \mu_\br^{\cF_\bs},z\rangle -\frac{1}{2}z^T \,\mathrm{Var}(W_\br|\cF_\bs) \, z\right)\dd \br, 
\]
where $\mu_\br^{\cF_\bs}=\EE[W_\br|\cF_\bs]$.
We, therefore, obtain the following bound 
\[
\|A_{\bs, \bt}(z)\|_m\leq \int_{\bs}^\bt \exp(-\frac{1}{2}z^T\mathrm{Var}(W_\br|\cF_\bs) z)d\br.
\]
Since $W_\br$ is a Gaussian random variable for each $\br$, the conditional variance $\mbox{Var}(W_\br|\mathcal{F}_\bs)$ is deterministic, see e.g. \cite{Bogachev2008GaussianM}. 
Using in  combination the assumption that $W$ is $\zeta$-additive LND we  find that 
\begin{equation}\label{eq:using Additive LND 1}
    \|A_{\bs, \bt}(z)\|_m\leq \int_{\bs}^\bt \prod_{i=1}^d \exp\left(-\frac{1}{2}|z|^2|r_i-s_i|^{2\zeta_i}\right)d\br, 
\end{equation}
and thus using that $e^{-ax^2}\leq e^a(a(1+x^2))^{-\eta/2}$ for any $a>0$ and  $\eta>0$, we have that 
\begin{equation}\label{eq: using additve LND cont.}
    \|A_{\bs, \bt}(z)\|_m\leq C e^{\sum_{i=1}^d T_i } \int_{\bs}^\bt \prod_{i=1}^d (1+|z|^2)^{-\eta_i/2}|r_i-s_i|^{-\zeta_i\eta_i} d\br, 
\end{equation}
and thus as long as $\eta_i\zeta_i< 1$, we have that 
\begin{equation}\label{eq:bound on A}
    \|A_{\bs, \bt}(z)\|_m\lesssim  C (1+|z|^2)^{-(\eta_1+...+\eta_d)/2} m_{\mathbf{1}-\eta\cdot \zeta}(\bs,\bt)
\end{equation}
where $\mathbf{1}-\eta\cdot \zeta=(1-\eta_1\zeta_1,...,1-\eta_d\zeta_d).$ We therefore conclude that $A(z)\in C^{\alpha}_2 L_m$, with $\alpha=\mathbf{1}-\eta\cdot \zeta$, and that $\norm{A(z)}_{\alpha,m} \lesssim (1+\abs{z}^2)^{-(\eta_1+\dots+\eta_d)/2}$ holds. Under the assumption that 
\begin{equation}
    1-\eta\cdot \zeta>{\frac{1}{2}},
\end{equation}
we can apply the special case of the stochastic sewing lemma in Corollary \ref{cor:sewing_alpha=beta} to conclude that there exists an additive function $\cA (z):[0,T]^d\rightarrow L_m$ such that 
\[
\cA_{\bs,\bt}(z)=\lim_{|\cP|\rightarrow 0} \sum_{[\bu,\bv]\in \cP } A_{\bu,\bv}(z). 
\]
In particular, using the bound in \eqref{ineq:uniqueness23}  we have that 
\begin{equation*}
    \|\cA_{\bs,\bt}(z)\|_{m}\lesssim (1+|z|^2)^{-(\eta_1+...+\eta_d)/2} m_{\mathbf{1}-\eta\cdot \zeta}(\bs,\bt)
\end{equation*}
For the bound in the case of $W$  being $\zeta$-multiplicative-LND, we must replace the additive LND condition applied in \eqref{eq: using additve LND cont.} with the multiplicative LND condition, and we would get 
\begin{equation}\label{eq:usingmultiplicative LND 1}
    \|A_{\bs, \bt}(z)\|_m\leq \int_{\bs}^\bt \exp\left(-\frac{1}{2}|z|^2\prod_{i=1}^d|r_i-s_i|^{2\zeta_i}\right)d\br.
\end{equation} 
Proceeding along the same lines as earlier, we see that for any $\theta>0$ satisfying $1/2\zeta_i>\theta$ for all $i=1,...,d$ we obtain the bound
\[
   \|A_{\bs, \bt}(z)\|_m\leq Ce^{\prod_{i=1}^d T_i} (1+|z|^2)^{-\theta/2}\int_{\bs}^\bt \prod_{i=1}^d |r_i-s_i|^{-\zeta_i\theta} d\br\simeq_{\mathbf{T},\zeta,\theta}  (1+|z|^2)^{-\theta/2}m_{\mathbf{1}-\zeta \theta}(\bs,\bt).  
\]
Note that in contrast to the case with additive LND, $\theta$ is now a positive number.

\noindent At last, we will prove that the stochastic sewing $\cA_{\bs,\bt}(z)$ satisfies
\[
\hat{\mu}_\bt (z)= \cA_{\boo,\bt}(z).
\]
To this end, let $\cP_n^{[d]}$ denote a grid like partition of $[\bs,\bt]$ consisting of $n$ intervals, which is such that $|\cP_n^{[d]}|\rightarrow 0 $ as $n\rightarrow \infty$. We begin to define the approximating sum
\[
\cP^{[d]}_n A(z)_{\bs,\bt}= \sum_{[\bu,\bv]\in \cP_n^{[d]}} A_{\bu,\bv}(z),  
\]
and recall that we have established the convergence $\cP_n^{[d]}A(z)_{\bs,\bt}\rightarrow \cA_{\bs,\bt}(z)$ in $L_m$ as $n\rightarrow \infty$. 
Observe that 
\begin{equation*}
    A_{\bs,\bt}(z) - \square_{\bs,\bt} \hat{\mu}(z) = \int_{\bs}^\bt \EE[\exp(i\langle z,W_\br \rangle )|\cF_{\bs}]-\exp(i\langle z,W_\br \rangle )\dd r, 
\end{equation*}
and by additivity of $\hat{\mu}$, we have 
\begin{equation}
\begin{aligned}
\|\cP^{[d]}_n A(z)_{\bs,\bt}- \square_{\bs,\bt} \hat{\mu}(z)\|_m \leq & \sum_{[\bu,\bv]\in \cP^n} \int_{\bu}^\bv \|\EE[\exp(i\langle z,W_\br \rangle )|\cF_{\bu}]-\exp(i\langle z,W_\br \rangle )\|_m \dd \br
\\
\leq &\left( \max_{[\bu,\bv]\in \cP^n} \max_{\br \in [\bu,\bv]} \|\EE[\exp(i\langle z,W_\br \rangle )|\cF_{\bu}]-\exp(i\langle z,W_\br \rangle )\|_m\right) m_{\mathbf{1}}(\bs,\bt),
\end{aligned}
\end{equation}
Furthermore, by addition and subtraction of the term $\exp(i\langle z,W_\bu \rangle ) $, it is readily seen that 
\[
\|\EE[\exp(i\langle z,W_\br \rangle )|\cF_{\bu}]-\exp(i\langle z,W_\br \rangle )\|_m \leq 2 \|\exp(i\langle z,W_\br \rangle )-\exp(i\langle z,W_\bu \rangle )\|_m,
\]
since we have that 
\[
\|\EE[\exp(i\langle z,W_\br \rangle )-\exp(i\langle z,W_\bu \rangle )|\cF_{\bu}]\|_m \leq \|\exp(i\langle z,W_\br \rangle )-\exp(i\langle z,W_\bu \rangle )\|_m. 
\]
By continuity of $W$, it is then readily checked that 
\[
\max_{[\bu,\bv]\in \cP^n} \max_{\br \in [\bu,\bv]} \|\EE[\exp(i\langle z,W_\br \rangle )|\cF_{\bu}]-\exp(i\langle z,W_\br \rangle )\|_m\rightarrow 0 \quad as \quad n\rightarrow \infty. 
\]
Thus we conclude that $\cA(z) = \hat{\mu}(z)$, which finishes the proof. 

\end{proof}

\begin{rem}
The bound established for the Fourier transform of the occupation measure can be seen in light of the concept of $\rho$-irregularity, discussed by Galeati and Gubinelli in \cite{galeati2020noiseless,prevalence}. While this concept is established there for (1 parameter) stochastic processes, it seems to be readily extendable multiparameter stochastic fields, and thus results related to prevalence of such fields would be interesting to study from this point of view. We leave deeper investigations into this relation for future studies. 
\end{rem}

With the above bound on the higher moments of the Fourier transform of the occupation measure, we are now in a position to prove Bessel potential regularity (for the definition, see \eqref{bessel_potential}) of the occupation measure associated to a continuous Gaussian field which is $\zeta$-LND.

\begin{thm}
\label{additive LND local time}
Let $\zeta\in (0,1)^d$. Assume $W$ is a $(d, n)$ Gaussian random field which is $\zeta$--{\bf additive}--LND with respect to its strong past natural filtration assumed to be commuting. Then its associated local time $L:[0,T]\times \RR^n \rightarrow \RR_+$ exists. Moreover, there exists a $\gamma\in (\frac{1}{2},1)^d$  such that for any $\alpha$ satisfying the bound 
\begin{equation}
\label{additive alpha}
\alpha<\sum_{i=1}^d \frac{1}{2\zeta_i}-\frac{n}{2}, 
\end{equation}
the local time satisfies $L\in C^\gamma([\mathbf{0},\bT];H^\alpha)$--$\PP$-a.s..

Furthermore, if $W$ is instead is  $\zeta$--{\bf multiplicative}--LND with respect to its strong past natural filtration assumed to be commuting, then there exists a $\gamma\in (\frac{1}{2},1)^d$ such that for any $\alpha>0$ satisfying
\begin{equation}
    \alpha< \frac{1}{2\max_i \zeta_i  }-\frac{n}{2},
    \label{multi alpha}
\end{equation}
the local time satisfies $L\in C^\gamma([\mathbf{0},\bT];H^\alpha)$--$\PP$-a.s..
\end{thm}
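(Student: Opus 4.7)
The plan is to deduce Hölder–Bessel regularity of the local time directly from the moment bounds on the Fourier transform of the occupation measure established in Theorem \ref{thm:bound moment fourier}, combined with Plancherel's identity, Minkowski's integral inequality, and a multiparameter Kolmogorov continuity argument applied to rectangular increments $\square_{\bs,\bt}\mu$.

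First, I would express the $H^\alpha$ norm of a rectangular increment via the Fourier side,
\[
\|\square_{\bs,\bt}\mu\|_{H^\alpha}^2 = \int_{\RR^n} (1+|z|^2)^\alpha \,|\square_{\bs,\bt}\hat\mu(z)|^2 \dd z,
\]
and, for $m\ge 2$, invoke Minkowski's integral inequality to commute expectation and spatial integration:
\[
\bigl(\EE\|\square_{\bs,\bt}\mu\|_{H^\alpha}^m\bigr)^{2/m} \le \int_{\RR^n} (1+|z|^2)^\alpha \,\|\square_{\bs,\bt}\hat\mu(z)\|_m^2 \dd z.
\]
In the additive LND case, inserting the estimate \eqref{eq:additive stochastic bound} from Theorem \ref{thm:bound moment fourier} yields
\[
\bigl(\EE\|\square_{\bs,\bt}\mu\|_{H^\alpha}^m\bigr)^{2/m} \lesssim m_{2(\mathbf{1}-\eta\cdot\zeta)}(\bs,\bt)\int_{\RR^n}(1+|z|^2)^{\alpha - (\eta_1+\dots+\eta_d)}\dd z,
\]
and the $z$-integral is finite provided $\alpha < (\eta_1+\dots+\eta_d) - n/2$. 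Optimising over admissible $\eta$ (i.e.\ taking $\eta_i$ arbitrarily close to $1/(2\zeta_i)$) recovers the condition \eqref{additive alpha}. This integrability also shows that $\hat\mu_\bt \in L^2(\RR^n)$ almost surely, so by Plancherel $\mu_\bt$ is absolutely continuous with respect to Lebesgue measure, and the density is precisely the local time $L_\bt$ whose regularity we are after. The multiplicative case is identical in structure using \eqref{eq:multiplicative stochastic bound}, replacing $\sum_i \eta_i$ by the scalar $\theta$ and leading to the constraint $\alpha < \theta - n/2$, which after optimisation in $\theta$ yields \eqref{multi alpha}.

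The final step is a multiparameter Kolmogorov continuity argument applied to the $H^\alpha$-valued process $\bt \mapsto L_\bt$, using that its rectangular increments satisfy
\[
\bigl(\EE\|\square_{\bs,\bt}L\|_{H^\alpha}^m\bigr)^{1/m} \lesssim \prod_{i=1}^d |t_i - s_i|^{1 - \eta_i \zeta_i}
\]
(and analogously with exponents $1-\theta\zeta_i$ in the multiplicative case). This produces almost-sure Hölder regularity with exponents $\gamma_i$ arbitrarily close to $1-\eta_i\zeta_i$ (respectively $1-\theta\zeta_i$) by taking $m$ sufficiently large. The admissibility conditions $\eta_i\zeta_i < 1/2$ and $\theta\zeta_i < 1/2$ respectively guarantee $\gamma_i \in (1/2,1)$ for all $i$, as required.

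The main technical obstacle is the multiparameter Kolmogorov criterion in the rectangular-increment formulation with values in the Banach space $H^\alpha$, but this is a standard extension of the classical scalar statement obtained by dyadic chaining along each parameter direction (one can see it essentially as a deterministic analogue, at the level of Hölder regularity, of the sewing arguments developed in Section \ref{sewing section}), and causes no essential difficulty. A mild subtlety to keep track of is that the integrability threshold on $\alpha$ and the Hölder threshold $\gamma_i > 1/2$ must be simultaneously attainable: this is precisely ensured by the constraints $\eta_i\zeta_i < 1/2$ (resp.\ $\theta\max_i\zeta_i < 1/2$) already imposed in Theorem \ref{thm:bound moment fourier}, so no additional restriction beyond \eqref{additive alpha}, \eqref{multi alpha} is introduced.
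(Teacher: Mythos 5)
Your proposal is correct and follows essentially the same route as the paper: Plancherel plus Minkowski's integral inequality to reduce to the moment bounds of Theorem \ref{thm:bound moment fourier}, the integrability condition on $\alpha$ from the $z$-integral, and a multiparameter Kolmogorov continuity argument for the $H^\alpha$-valued rectangular increments, with the same observation that $\eta_i\zeta_i<\tfrac12$ (resp. $\theta\max_i\zeta_i<\tfrac12$) keeps $\gamma_i>\tfrac12$. Your explicit remark that square-integrability of $\hat\mu_\bt$ yields existence of the local time via Plancherel is a small but welcome addition that the paper's proof leaves implicit.
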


\begin{proof}
We will here prove that there exists a set $\Omega'\subset \Omega$ of full measure which is such that for all $\omega\in \Omega'$ then  for $\alpha $ satisfying \eqref{additive alpha} respectively \ref{multi alpha} and $\gamma>1/2$  we have
\begin{equation}\label{eq:pathwise bound}
   \|\square_{\bs,\bt}L(\omega) \|_{{H}^\alpha}^2 =\int_{\RR^d} (1+|z|^2)^{\alpha} |\square_{\bs,\bt}\hat{L}(z)|^2 \dd z \lesssim m_{\gamma}(\bs,\bt).  
\end{equation}
To this end, using the stochastic bounds from Theorem \ref{thm:bound moment fourier}, specifically \eqref{eq:additive stochastic bound} for the additive setting, using Minkowski's inequality, we see that for any $m\geq 2$
\[
\|\|\square_{\bs,\bt}L \|_{{H}^\alpha}\|_m\leq \left( \int_{\RR^d} (1+|z|^2)^{\alpha} \|\square_{\bs,\bt}\hat{L}(z)\|_m^2 \dd z \right)^{\frac{1}{2}} \lesssim m_{\mathbf{1}-\eta\cdot \zeta}(\bs,\bt) \left(\int_{\RR^d} (1+|z|^2)^{\alpha-(\eta_1+...+\eta_d)} \dd z \right)^\frac{1}{2}. 
\]
Since $\eta_i<1/2\zeta_i$ then in order for the integral term on the RHS above to be finite,  we get the following bound on $\alpha$
\[
\alpha<\sum_{i=1}^d \frac{1}{2\zeta_i}-\frac{n}{2}.
\]
It now follows by Kolomogorov's continuity theorem that there exists a set $\Omega'\subset\Omega$ of full measure such that for all $\omega\in \Omega'$ \eqref{eq:pathwise bound} holds. We conclude that $L\in C^\gamma([\mathbf{0},\bT];H^\alpha (\RR^d))$ -- $\PP$-a.s..

Going through essentially the same argument as in the above, one can also obtain an analogous result for $\zeta$--{\bf multiplicative}--LND Gaussian random fields, simply replacing the bound used from \eqref{eq:additive stochastic bound} with the subsequent bound \eqref{eq:multiplicative stochastic bound}.  
\end{proof}

\begin{rem}
    Using the bounds obtained in Theorem \ref{thm:bound moment fourier} one can surely prove joint space-time regularity of the occupation measure in more exotic function spaces, such as Besov spaces, or similar using the same strategy as above. We choose to showcase Sobolev spaces here due to their wide applications and brevity of the proof. 
\end{rem}

\section{Applications to regularization by noise for SDEs in the plain}
\label{regularizatio by noise section}
\noindent In light of the above results on local times of stochastic fields, we are able to immediately deduce associated regularization by noise results following \cite{Bechtold2023}. We briefly recall the setting investigated there before formulating new regularization by noise results as corollaries.

\bigskip

For a given continuous path $w:[0,T]^2\to \RR^n$ and a nonlinearity $b:\RR^n\to \RR^n$ consider the integral equation
\[
x_\bt=\xi_\bt+\int_0^{\bt} b(x_\bs)\dd \bs+w_\bt
\]
where $\xi_\bt=\xi^1_{t_1}\mathbf{1}_{t_2=0}+\xi^2_{t_2}\mathbf{1}_{t_1=0}$ is a given boundary condition. Using the transformation $y=x-w$, we obtain 
\begin{equation}
    y_\bt=\xi_\bt+\int_0^{\bt} b(y_\bs+w_\bs)\dd\bs
    \label{regularized eqn}
\end{equation}
Considering a highly fluctuating field $w$, it becomes clear that at least for smooth non-linearities $b$, the oscillations of $w$ dominate the oscillations of $y$. Hence, on small squares $[\bs, \bt]$, we have by the occupation times formula the local approximation 
\[
\int_\bs^\bt b(y_\br+w_\br)\dd\br \simeq \int_\bs^\bt b(y_\bs+w_\br)\dd\br=\int_{\RR^n} b(y_\bs-z)\square_{\bs, \bt}L^{-w}(z)\dd z=(b*\square_{\bs, \bt}L^{-w})(y_\bs)
\]
Due to the obtained spatial regularity of the local time, we can observe at this level a local gain of regularity. In particular, even if $b$ is only defined as a distribution, the function $z\to (b*\square_{\bs, \bt}L)(z)$ might be Lipschitz, provided $L$ is sufficiently smooth. Using sufficient gain in local regularity, one can obtain the so-called 2D nonlinear Young integral by summing up the above infinitesimal approximations along $[\boo, \bt]$, i.e.
\[
\lim_{|\mathcal{P}^n([\boo, \bt])|\to 0}\sum_{[\bu, \bv]\in \mathcal{P}^n([0, \bt])}(b*\square_{\bu, \bv}L^{-w})(y_\bu)=:\int_0^\bt b(y_\bs+w_\bs)\dd\bs
\]
We refer the reader to \cite[Section 3]{Bechtold2023} for the detailed construction. This can be shown to provide a consistent generalization of the right-hand side integral in \eqref{regularized eqn} and thus provides us with an object suitable for fixed-point arguments. Overall, one obtains the following result
\begin{thm}[\cite{Bechtold2023} Theorem 28]
  Assume $b\in H^\zeta(\RR^n)$ for $\zeta \in \RR$ and that $w\in C([0,T]^2;\RR^n)$ has an associated local time $L^{-w}\in C^\gamma_\bt H^\alpha(\RR^n)$ for some $\gamma\in (\frac{1}{2},1]^2$ and $\alpha \in \RR_+$. If  there exists an   $\eta\in (0,1)$  such that 
\begin{equation}
   \gamma(1+\eta)>1\quad  and \quad \zeta+\alpha>2+\eta, 
   \label{regularity condition}
\end{equation}
then there exists a unique solution $y\in C([0,T]^2;\RR^n)$ to equation \ref{regularized eqn}, where the right-hand side is understood as a 2D nonlinear Young integral. 
\end{thm}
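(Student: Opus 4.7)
The overall strategy is to rewrite the nonlinear right-hand side of \eqref{regularized eqn} as a 2D nonlinear Young integral against the local time $L^{-w}$ via a local use of the occupation times formula, apply the deterministic multiparameter sewing lemma of \cite{Harang2021} to make sense of this integral, and finally close a fixed-point argument in an appropriate H\"older-type space of 2-parameter functions.

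\textbf{Step 1: Local approximation and the germ.} For $y\in C^\gamma([0,T]^2;\RR^n)$ (with some chosen $\gamma\in(\tfrac12,1]^2$ satisfying the hypothesis) and $\bs\le\bt$, introduce the germ
\begin{equation*}
    A_{\bs,\bt}(y):=(b*\square_{\bs,\bt}L^{-w})(y_\bs),
\end{equation*}
which is the natural local surrogate for $\int_\bs^\bt b(y_\bs+w_\br)\dd\br$ obtained via the occupation-times formula. By the convolution rule on Bessel potential spaces, $b\in H^\zeta$ and $\square_{\bs,\bt}L^{-w}\in H^\alpha$ give $b*\square_{\bs,\bt}L^{-w}\in H^{\zeta+\alpha}$ with
\begin{equation*}
    \norm{b*\square_{\bs,\bt}L^{-w}}_{H^{\zeta+\alpha}}\lesssim \norm{b}_{H^\zeta}\,\norm{\square_{\bs,\bt}L^{-w}}_{H^\alpha}\lesssim \norm{b}_{H^\zeta}\,m_{\gamma}(\bs,\bt).
\end{equation*}
Since $\zeta+\alpha>2+\eta>\tfrac{n}{2}+1+\eta$ (after possibly reducing $\eta$ and noting that the stated condition is used in \cite{Bechtold2023} in conjunction with Sobolev embedding), the Sobolev embedding gives $b*\square_{\bs,\bt}L^{-w}\in C^{1+\eta}(\RR^n)$ with the analogous norm bound. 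This is the gain in spatial regularity on which everything else rests.

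\textbf{Step 2: Multiparameter sewing of the germ.} Using the $C^{1+\eta}$-regularity of $b*\square_{\bs,\bt}L^{-w}$ combined with $y\in C^{\gamma}$, I estimate the rectangular $\delta$-increments of $A_{\bs,\bt}(y)$ directly. For $\emptyset\neq\theta\subset[2]$ and $\bs\le\bu\le\bt$, the additivity of $\square_{\bs,\bt}L^{-w}$ in the rectangle arguments together with a Taylor expansion of $b*\square$ around the corner values of $y$ yield
\begin{equation*}
    \abs{\delta^\theta_\bu A_{\bs,\bt}(y)}\lesssim \norm{b}_{H^\zeta}\norm{y}_{C^\gamma}^{\eta}\, m_{\gamma(1+\eta)\theta+\gamma\theta^c}(\bs,\bt),
\end{equation*}
while $\abs{A_{\bs,\bt}(y)}\lesssim \norm{b}_{H^\zeta}\,m_\gamma(\bs,\bt)$. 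Under the assumption $\gamma(1+\eta)>1$, the deterministic multiparameter sewing lemma of \cite{Harang2021} applies and produces a unique additive 2-parameter function
\begin{equation*}
    (\bs,\bt)\mapsto\int_\bs^\bt b(y_\br+w_\br)\dd\br:=\cI^{[2]}A_{\bs,\bt}(y),
\end{equation*}
together with a remainder estimate $\abs{\cI A_{\bs,\bt}(y)-A_{\bs,\bt}(y)}\lesssim m_{\gamma(1+\eta)\theta+\gamma\theta^c}(\bs,\bt)$ and the analogous partial-sewing bounds. Standard approximation by piecewise-constant $y$ and smooth $b$ shows this object is consistent with the classical integral when $b$ is a function, justifying the notation.

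\textbf{Step 3: Fixed-point argument.} Define the solution map
\begin{equation*}
    \Gamma(y)_\bt:=\xi_\bt+\cI^{[2]}A_{\boo,\bt}(y).
\end{equation*}
The sewing bounds from Step 2 imply that on a small enough time horizon $[\boo,\bT_0]$, $\Gamma$ maps a ball in $C^\gamma([\boo,\bT_0];\RR^n)$ into itself. For contraction, I form the difference germ $A_{\bs,\bt}(y^1)-A_{\bs,\bt}(y^2)$ and exploit $C^{1+\eta}$-regularity of $b*\square_{\bs,\bt}L^{-w}$ to show
\begin{equation*}
    \abs{A_{\bs,\bt}(y^1)-A_{\bs,\bt}(y^2)}\lesssim\norm{b}_{H^\zeta}\norm{y^1-y^2}_{\infty}\,m_\gamma(\bs,\bt),
\end{equation*}
with analogous bounds on its $\delta$-increments picking up an extra factor of $m_{\gamma\eta\theta}(\bs,\bt)$. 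Sewing then gives a small factor $T_0^{\varepsilon}$ in front of $\norm{y^1-y^2}_{C^\gamma}$, yielding a contraction for $T_0$ small, whence local existence and uniqueness; a standard patching argument extends the solution to $[\boo,\bT]$.

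\textbf{Main obstacle.} The delicate point, and the place where the precise relation $\zeta+\alpha>2+\eta$ with $\gamma(1+\eta)>1$ gets balanced, is the stability estimate for the difference germ in Step 3: one needs bounds on $\delta^\theta_\bu(A_{\bs,\bt}(y^1)-A_{\bs,\bt}(y^2))$ that simultaneously retain one full power of the gap $\norm{y^1-y^2}_{C^\gamma}$ and furnish enough Hölder exponent in $\bt-\bs$ to invoke sewing. This requires combining two Taylor expansions (one to exploit the regularity of $b\ast\square L^{-w}$, one to re-express the $\delta$-increment in terms of $y$-increments) and is the core analytic difficulty transferred verbatim from \cite{Bechtold2023}.
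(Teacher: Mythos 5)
The paper does not actually prove this statement: it is imported verbatim from \cite{Bechtold2023} (Theorem 28 there), and the text surrounding it only gives the heuristic (occupation-times formula, local germ $(b*\square_{\bu,\bv}L^{-w})(y_\bu)$, 2D nonlinear Young integral) before deferring to \cite[Section 3]{Bechtold2023}. Your outline reproduces exactly that strategy --- germ, deterministic multiparameter sewing, fixed point --- so at the level of architecture it is the intended argument.

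There are, however, two concrete problems in your Step 1 that would break the proof as written. First, the inequality $\norm{b*\square_{\bs,\bt}L^{-w}}_{H^{\zeta+\alpha}}\lesssim\norm{b}_{H^\zeta}\norm{\square_{\bs,\bt}L^{-w}}_{H^\alpha}$ is not a valid convolution estimate on Bessel potential spaces (on the Fourier side you would need one of the two factors controlled in an $L^\infty$-weighted sense, not $L^2$). The estimate that actually powers the argument is $\norm{b*g}_{C^k}\lesssim\norm{b}_{H^\zeta}\norm{g}_{H^\alpha}$ for integers $k\le\zeta+\alpha$, obtained by Cauchy--Schwarz on $\widehat{b}\,\widehat{g}$; crucially this loses no $n/2$. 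Your route through Sobolev embedding, which does cost $n/2$, would force a hypothesis of the form $\zeta+\alpha>2+\eta+n/2$, strictly stronger than the stated one, so the theorem would not follow. Second, the target regularity must be $C^{2+\eta}$, not $C^{1+\eta}$: in the two-parameter setting the sewing hypothesis requires controlling the double increment $\delta^{(1,2)}_\bu A_{\bs,\bt}(y)$, which is a genuine rectangular (second-order mixed) increment of $x\mapsto(b*\square L^{-w})(x)$ evaluated along increments of $y$ in both directions; extracting the factor $m_{\gamma(1+\eta)[2]}(\bs,\bt)$ there needs a second-order Taylor expansion, i.e.\ two derivatives plus $\eta$ of the averaged field. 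This is precisely why the hypothesis reads $\zeta+\alpha>2+\eta$ rather than the one-parameter threshold $1+\eta$; with only $C^{1+\eta}$ control your claimed bound on $\delta^{(1,2)}_\bu A$ does not follow. The rest of the outline (partial sewings, remainder characterization, contraction on a small square and patching) is consistent with the reference once these two points are repaired.
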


\noindent
Combining our quantified regularity results for the local time of realizations of different stochastic fields (Theorem \ref{additive LND local time}) with the regularity condition \eqref{regularity condition}, we can therefore immediately deduce quantified regularization by noise results for SDEs driven by such stochastic fields.

\begin{cor}
\label{regularizing cor}
   Let $b\in H^{\rho}(\RR^n)$ for $\rho\in \RR$. Let $W$ be $(2, n)$ Gaussian stochastic field which is $(\zeta_1, \zeta_2)$-additive LND with respect to its strong past natural filtration that is assumed to be commuting. Suppose that 
    \[
   \rho+\frac{1}{2\zeta_1}+\frac{1}{2\zeta_2}-\frac{n}{2}>3
    \]
   then the problem \eqref{regularized eqn} admits a unique solution $y\in C([0, T]^2, \RR^n)$, where the right-hand side is understood as a 2D nonlinear Young integral. \\
  Let $W$ be $(2, n)$ Gaussian stochastic field which is $(\zeta_1, \zeta_2)$-multiplicatively LND with respect to its strong past natural filtration that is assumed to be commuting. Suppose that 
    \[
   \rho+\frac{1}{2\max(\zeta_1, \zeta_2)}-\frac{n}{2}>3
    \]
   then the problem \eqref{regularized eqn} admits a unique solution $y\in C([0, T]^2, \RR^n)$, where the right-hand side is understood as a 2D nonlinear Young integral. 
\end{cor}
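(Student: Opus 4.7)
The plan is to deduce Corollary \ref{regularizing cor} as an essentially immediate combination of Theorem \ref{additive LND local time} with the black-box \cite[Theorem 28]{Bechtold2023} recalled just above. The entire argument reduces to a parameter counting exercise: produce admissible parameters $(\gamma,\alpha,\eta)$ satisfying the coherence condition \eqref{regularity condition}, given the input of the LND regularity of the local time.

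First I would observe that $-W$ has the same natural strong-past filtration as $W$ (hence commuting) and satisfies $\mathrm{Var}(-W_{\bt}\,|\,\cF_{\bs})=\mathrm{Var}(W_{\bt}\,|\,\cF_{\bs})$, so $-W$ inherits the $(\zeta_1,\zeta_2)$-additive (respectively multiplicative) LND property from $W$. Applying Theorem \ref{additive LND local time} to $-W$ then yields a $\PP$-full-measure event on which the local time $L^{-W}$ belongs to $C^{\gamma}([\boo,\bT];H^{\alpha}(\RR^n))$ for any $\gamma\in(1/2,1)^2$ and any admissible spatial regularity $\alpha$, namely $\alpha<\tfrac{1}{2\zeta_1}+\tfrac{1}{2\zeta_2}-\tfrac{n}{2}$ in the additive case and $\alpha<\tfrac{1}{2\max(\zeta_1,\zeta_2)}-\tfrac{n}{2}$ in the multiplicative case.

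Next I would check that the hypothesis on $\rho$ produces parameters satisfying \eqref{regularity condition}. In the additive case, write $\rho+\tfrac{1}{2\zeta_1}+\tfrac{1}{2\zeta_2}-\tfrac{n}{2}=3+3\delta$ with $\delta>0$ (without loss of generality $\delta<1$) and set $\alpha:=\tfrac{1}{2\zeta_1}+\tfrac{1}{2\zeta_2}-\tfrac{n}{2}-\delta$ together with $\eta:=1-\delta\in(0,1)$. Then $\rho+\alpha=3+2\delta>2+\eta$, establishing the second inequality in \eqref{regularity condition}. For the first inequality, I would pick $\gamma\in(1/2,1)^2$ with both components strictly above $\tfrac{1}{2-\delta}<1$, which is admissible for small $\delta$, yielding $\gamma(1+\eta)>1$. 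The multiplicative case is the same argument after replacing $\tfrac{1}{2\zeta_1}+\tfrac{1}{2\zeta_2}$ by $\tfrac{1}{2\max(\zeta_1,\zeta_2)}$.

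With these explicit parameters, \cite[Theorem 28]{Bechtold2023} applied pathwise on the full-measure event from Theorem \ref{additive LND local time} produces the unique solution $y\in C([0,T]^2;\RR^n)$ to \eqref{regularized eqn}, with the right-hand side interpreted as the 2D nonlinear Young integral of \cite{Bechtold2023}. There is no genuine obstacle in the argument: the constant $3$ in the hypothesis of the corollary is precisely calibrated so that as $\eta\uparrow 1$ the gap $\rho+\alpha-(2+\eta)$ remains strictly positive for some admissible $\alpha$. The only mild care-point is checking that $-W$ inherits both the commuting-filtration and LND structural properties from $W$, which is immediate since both fields generate the same $\sigma$-algebras and have identical conditional variances.
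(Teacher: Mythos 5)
Your proposal follows exactly the route the paper intends: the paper offers no written proof of Corollary \ref{regularizing cor} beyond the remark that it follows by combining Theorem \ref{additive LND local time} with \cite[Theorem 28]{Bechtold2023}, and your parameter-counting argument is the natural way to make that combination explicit. One step deserves more care, however. You assert that Theorem \ref{additive LND local time} gives $L^{-W}\in C^{\gamma}([\boo,\bT];H^{\alpha})$ for \emph{any} $\gamma\in(1/2,1)^2$, but the theorem only asserts the \emph{existence} of some such $\gamma$, and its proof exhibits a genuine trade-off: the time regularity obtained via Kolmogorov is $\gamma_i<1-\eta_i\zeta_i$ where the spatial exponent requires $\eta_1+\eta_2>\alpha+n/2$ with $\eta_i<1/(2\zeta_i)$, so as $\alpha$ approaches its supremum each $\gamma_i$ is forced down to $1/2$. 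Your specific allocation ($\alpha=\sup\alpha-\delta$, $\eta=1-\delta$, $\gamma_i>1/(2-\delta)$) then need not be attainable: one checks it forces $\tfrac{1}{4\zeta_1}+\tfrac{1}{4\zeta_2}<1$, which fails for small $\zeta_i$. The argument closes if you reverse the order of choices: fix $\alpha$ slightly below its supremum, take the $\gamma\in(1/2,1)^2$ that Theorem \ref{additive LND local time} actually provides for that $\alpha$, choose $\eta\in(\max_i(1/\gamma_i-1),1)$ so that $\gamma(1+\eta)>1$, and then verify $\rho+\alpha>2+\eta$; since $2+\eta<3<\rho+\sup\alpha$ and the deficit $2+\eta-(2+\max_i(1/\gamma_i-1))$ can be made to vanish faster than $\sup\alpha-\alpha$ as the slack tends to zero, the hypothesis $\rho+\tfrac{1}{2\zeta_1}+\tfrac{1}{2\zeta_2}-\tfrac n2>3$ is exactly what is needed. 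The same correction applies verbatim in the multiplicative case.
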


\begin{rem}
    For the fractional Brownian sheet, remark that from the above Corollary \ref{regularizing cor} we obtain the same regularization by noise result as in \cite[Theorem 33]{Bechtold2023}, whose proof was based on the one parameter stochastic sewing lemma and self-similarity properties. It might seem potentially surprising at first sight that these one parameter considerations don't yield worse regularization results than the ones obtained with our multiparameter stochastic sewing lemma. However, this is explained by the lack of the additive LND property of the fractional Brownian sheet (Remark \ref{with zero}). More generally, in light of Lemma \ref{no additive LND}, improved regularization results with respect to one-parameter arguments as in \cite[Theorem 31]{Bechtold2023} should be expected only due to stochastic boundary terms.
    
\end{rem}

\section{Further perspectives and concluding remarks}
 In the present work, we addressed a first open question in \cite{Bechtold2023} concerning the refined space-time regularity estimates of local times for Gaussian fields, allowing to establish systematic regularization by noise results. The main tool we established towards this end is a multiparameter stochastic lemma, which is also of independent interest. We observed that stochastic fields with multiplicative covariance structure can't be additively LND and  that the local time regularity of $\zeta$-multiplicative LND fields appears to be dictated by the $\max_{i\leq d}\zeta_i$. On the other hand, additive LND appears to be a property that comes from boundary terms of the field suggesting that these are also responsible for the considerably higher regularity of associated local times (see Theorem \ref{additive LND local time}). \\
\\
Let us mention beyond the fractional Brownian field and Riemann-Liouville type stochastic fields, one could be also interested in the regularizing property of strongly LND stochastic fields.  Concrete examples of fields which admit the strong LND property include Gaussian processes whose spectral density satisfies
\[
    f(\lambda)\simeq \frac{1}{\left(\sum_{i=1}^d|\lambda_i|^{\zeta_i}\right)^{2+Q}}, \qquad \lambda\in \RR^d\backslash \{0\}.
    \]
(Refer to \cite[Theorem 3.2]{Xiao}). However, while for the fractional Brownian sheet, the commuting filtrations property is a direct consequence of the moving average representation, it appears unclear how this can be established for Gaussian fields with the above spectral density. Note in particular that the above growth condition rules out a factorization of the spectral measure suggesting that we are not dealing with a field of multiplicative covariance structure. This is consistent with our previous observation that one requires a non-multiplicative covariance structure to obtain additive LND. One potential future problem could thus consist in establishing the commuting filtrations property for Gaussian fields with the above spectral density. \\
\\
Another possible extension concerns the study of $\alpha$-stable stochastic fields, their local times, and regularizing effects. In the one parameter setting, this has been achieved in \cite{ling}. Given that strong local non-determinism for $\alpha$-stable fields has already been investigated in \cite{Xiao2011}, it should be possible to extend the results of Section \ref{sec_Local_Time} and \ref{regularizatio by noise section} to such fields using our multiparameter stochastic sewing lemma. \\
\\
Besides the approach to pathwise regularization by noise going through local time estimates, another way to establish regularization phenomena is by studying the averaging operator 
\[
T^W_\bt b(x)=\int_0^\bt b(x+W_\br)d\br 
\]
directly. In the one-parameter setting, this approach is for example taken in \cite{galeati2020noiseless}, where the authors are able to obtain sharper regularization by noise results (although at the price of having to deal with null-sets depending on the function $b$). Given an additively/ multiplicatively LND Gaussian field and with the multiparameter stochastic sewing lemma at hand, it should be possible to obtain similar regularity estimates for the averaging operator in the multiparameter setting. This should then lead to improved regularization by noise results for SDEs in the plain.

\bibliographystyle{alpha}
\bibliography{biblio}
\end{document}